\newcommand{\udot}[1]{\ensuremath{\underset{\bullet}{#1}}}
\newcommand{\sst}{\scriptstyle}
\newcommand{\iso}{\cong}
\newcommand{\mcb}[2]{\colorbox{#1}{\(#2\)}}
\newcommand{\gmcb}[1]{\mcb{lime}{#1}}
\newcommand{\tot}{\ensuremath{\mathrm{Tot}}}
\newcommand{\sgn}{\ensuremath{\mathrm{sgn}}}
\newcommand{\inv}{^{-1}}
\newcommand{\totds}{\ensuremath{\mathrm{T}
    \raisebox {0.6pt} {\scalebox{0.6}{\ensuremath{+}}} \hskip 0.25 pt
    \llap{\rm o} \mathrm{t}\,}}
\newcommand{\tottr}{\ensuremath{{}_{\mathrm{tr}}\mathrm{Tot}\,}}
\newcommand{\prodtr}{\ensuremath{\sideset{_\mathrm{tr}}{}\prod}}
\newcommand{\torus}{\ensuremath{\mathcal{T}}}
\newcommand{\triv}{\mathfrak{Triv}}
\newcommand{\derived}{\mathfrak{Der}}
\newcommand{\ie}{\textit{i.e.}}
\newcommand{\resp}{\textit{resp.}}
\numberwithin{equation}{section}
\numberwithin{section}{part}   
\newcommand{\ignore}[1]{\relax}
\newtheorem{theorem}[equation]{Theorem}
\newtheorem{corollary}[equation]{Corollary}
\newtheorem{proposition}[equation]{Proposition}
\newtheorem{lemma}[equation]{Lemma}
\theoremstyle{definition}
\newtheorem{definition}[equation]{Definition}
\newtheorem{example}[equation]{Example}
\newcommand{\bZ}{\mathbb{Z}}
\newcommand{\bR}{\mathbb{R}}
\newcommand{\bN}{\mathbb{N}}
\newcommand{\nbd}{\nobreakdash}
\newcommand{\id}{\ensuremath{\mathrm{id}}}
\newcommand{\tensor}{\otimes}
\newcommand{\LL}{{R[x,\,x\inv,\, y,\, y\inv]}}
\newcommand{\pL}{\ensuremath{\textrm{\rm \L}}}
\newcommand{\nov}[1]{(\!( {#1} )\!)} 
\newcommand{\powers}[1]{[\;\!\![ {#1} ]\;\!\!]} 
\newcommand{\hook}[1]{\lfloor#1\rfloor}
\let\oldtocsubsection=\tocsubsection
\renewcommand{\tocsubsection}[2]{\hspace{3.5em}\(\cdot\)~\oldtocsubsection{#1}{#2}}
\begin{document}

\title[Finite domination and Novikov rings]{Finite domination and Novikov rings.\\Laurent polynomial rings in several variables}

\date{\today}

\author{Thomas H\"uttemann}

\address{Thomas H\"uttemann\\ Queen's University Belfast\\ School of
  Mathematics and Physics\\ Pure Mathematics Research Centre\\ Belfast
  BT7~1NN\\ Northern Ireland, UK}

\email{t.huettemann@qub.ac.uk}

\urladdr{http://huettemann.zzl.org/}

\author{David Quinn}

\address{David Quinn\\ University of Aberdeen\\ School of Natural and
  Computing Sciences\\ Institute of Mathematics\\ Fraser Noble Building\\
  Aberdeen AB24~3UE\\ Scotland, UK}

\email{davidquinnmath@gmail.com}

\thanks{This work was supported by the Engineering and Physical
  Sciences Research Council [grant number EP/H018743/1].}

\subjclass[2010]{Primary 18G35; Secondary 55U15}

\keywords{finite domination, chain complex, multi-complex, truncated
  product totalisation, mapping torus, homotopy-commutative diagram}

\begin{abstract}
  %
  We present a homological characterisation of those chain complexes
  of modules over a \textsc{Laurent} polynomial ring in several
  indeterminates which are finitely dominated over the ground ring
  (that is, are a retract up to homotopy of a bounded complex of
  finitely generated free modules). The main tools, which we develop
  in the paper, are a non-standard totalisation construction for
  multi-complexes based on truncated products, and a high-dimensional
  mapping torus construction employing a theory of cubical diagrams
  that commute up to specified coherent homotopies.
\end{abstract}

\maketitle

\tableofcontents

\part*{Introduction}

Let $R \subseteq K$ be a pair of unital rings. A cochain complex~$C$
of $K$\nbd-modules is called {\it $R$-finitely dominated\/} if $C$~is
homotopy equivalent, as an $R$\nbd-module complex, to a bounded
complex of finitely generated projective $R$\nbd-modules;
equivalently, if $C$ is a retract up to homotopy of a bounded complex
of finitely generated free $R$\nbd-modules
\cite[Proposition~3.2]{MR815431}.

Finite domination is relevant, for example, in group theory and
topology. Suppose that $G$ is a group of type~$(FP)$; this means, by
definition, that the trivial $G$\nbd-module~$\bZ$ admits a finite
resolution~$C$ by finitely generated projective
$\bZ[G]$\nbd-modules. Let $H$~be a subgroup of~$G$. Deciding whether
$H$ is of type~$(FP)$ is equivalent to deciding whether $C$ is
$\bZ[H]$\nbd-finitely dominated.

In topology, finite domination has been considered in the context of
homological finiteness properties of covering spaces (\textsc{Dwyer}
and \textsc{Fried} \cite{MR887774}), or properties of ends of
manifolds (\textsc{Ranicki} \cite{Ranicki-findom}).

\medbreak

Our starting point is the following result of \textsc{Ranicki}
\cite[Theorem~2]{Ranicki-findom}: {\it Let $C$ be a bounded complex
of finitely generated free modules over $K = R[x,x\inv]$. The complex
$C$ is $R$\nbd-finitely dominated if and only if the two complexes
  \[C \tensor_K R\nov{x} \quad \text{and} \quad C \tensor_K
  R\nov{x\inv}\] are acyclic.}  Here $R\nov{x} = R\powers{x}[x\inv]$
denotes the ring of formal \textsc{Laurent} series in~$x$, and
$R\nov{x\inv} = R\powers{x\inv}[x]$ denotes the ring of formal
\textsc{Laurent} series in~$x\inv$.

For \textsc{Laurent} polynomial rings in several indeterminates, it is
possible to strengthen this result to allow for iterative application,
see for example~\cite{Iteration}. In particular, writing $L = \LL$ for
the \textsc{Laurent} polynomial ring in two variables, one can show
that a bounded complex of finitely generated free $L$\nbd-modules is
$R$\nbd-finitely dominated if and only if the four complexes
\begin{align*}
  &C \tensor_L R[x,x\inv]\nov{y}  && C \tensor_L R[x,x\inv]\nov{y\inv} \\
  &C \tensor_{R[x,x\inv]} R\nov{x} && C \tensor_{R[x,x\inv]} R\nov{x\inv}
\end{align*}
are acyclic.

This characterisation has the rather unsatisfactory feature that the
tensor products are taken over two different rings. In the present
paper, we propose a different, non-iterative approach leading to an
entirely new characterisation of finite domination. Roughly speaking,
a cone in~$\bR^{n}$ determines a certain ring of formal
\textsc{Laurent} series, and our main theorem asserts that finite
domination is equivalent to the vanishing of homology with
coefficients in these \textsc{Laurent} series rings for sufficiently
many cones.

\section*{Informal statement of results}

We think of the \textsc{Laurent} polynomial ring $L =
R[x_{1}^{\pm1},\, x_{2}^{\pm1},\, \cdots,\, x_{n}^{\pm1}]$ in
$n$~indeterminates as the monoid ring $R[\bZ^{n} \cap \bR^{n}]$,
identifying the $n$\nbd-tuples of exponents of monomials with integral
points in~$\bR^{n}$. A cone $\sigma \subseteq \bR^{n}$ then defines a
subring $R[\bZ^{n} \cap \sigma^{\vee}]$ of~$L$, where $\sigma^{\vee}$
is the dual cone. Another way to describe this is as follows. The
support of a formal sum $f = \sum_{\mathbf{a} \in \bZ^{n}}
r_{\mathbf{a}}\mathbf{x}^{\mathbf{a}}$, where $\mathbf{x}^{\mathbf{a}}
= x_{1}^{a_{1}}x_{2}^{a_{2}} \cdots x_{n}^{a_{n}}$, is the set of
those $\mathbf{a} \in \bZ^{n}$ with $r_{\mathbf{a}} \neq 0$. Then
$R[\bZ^{n} \cap \sigma^{\vee}]$ is the set of all such $f$ having
finite support which is contained in $\sigma^{\vee}$. Omitting the
finiteness condition yields a set of formal \textsc{Laurent} series;
if $\dim \sigma = n$ the ring $R\nov{\sigma}$ can be thought of as the
set of those $f$ with support in a translated copy of~$\sigma^{\vee}$,
where we allow translation by the negative of a vector in the interior
of~$\sigma^{\vee}$. This construction is modified for cones of
dimension less than~$n$; the modification, and the ring structure
of~$R\nov{\sigma^{\vee}}$, are explained in detail in
\S\ref{sec:cones}.

For example, if $\sigma$ is a cone spanned by $d$ elements
of~$\bZ^{n}$ which are $\bZ$\nbd-linearly independent, then there is
an $R$\nbd-algebra isomorphism
\begin{displaymath}
  R \nov{\sigma} \iso R[x_{1}^{\pm1},\, x_{2}^{\pm1},\, \cdots,\,
  x_{n-d}^{\pm1}] \powers{x_{n-d+1},\, x_{n-d+2},\, \cdots,\, x_{n}}
  [1/\prod_{k=0}^{d-1} x_{n-k}]
\end{displaymath}
between $R \nov{\sigma}$ and a localisation of a formal power series
ring over a \textsc{Laurent} polynomial ring; the right-hand side
consists of formal \textsc{Laurent} series (with coefficients in a
ring of \textsc{Laurent} polynomials) having the property that the
support lies in the ``first orthant'' after shifting along the
diagonal vector $(1,\, 1,\, \cdots,\, 1)$ finitely often.

In Theorems~\ref{thm:findom_triv_1st_quadrant}
and~\ref{thm:toric_criterion} we show that if $C$ is a bounded complex
of finitely generated free $L$\nbd-modules which is $R$\nbd-finitely
dominated then the complex $C \tensor_{L} R\nov{\sigma^{\vee}}$ is
acyclic. We also show that, conversely, if $C \tensor_{L}
R\nov{\sigma^{\vee}}$ is acyclic for all $\sigma$ coming from a family
of cones covering all of~$\bR^{n}$, then $C$ is necessarily
$R$\nbd-finitely dominated.

For two variables ($n=2$) a version of this programme has been carried
out by the authors in the paper~\cite{square}. The present extension
to more than two variables is non-trivial as it demands a theory of
high-dimensional mapping tori, in turn resting on a theory of homotopy
commutative cubical diagrams. Both are developed in this paper, and
might be of independent interest for researchers in homological
algebra.

\section*{Structure of the paper}

The paper is divided into three parts. In the first we develop the
theory of homotopy commutative cubical diagrams of cochain complexes,
culminating in the construction of derived cubes
(Theorem~\ref{thm:main_cube}) which are, roughly speaking, homotopy
commutative diagrams obtained from commutative ones by replacing all
entries with homotopy equivalent ones. In the second part, we introduce
higher-dimensional mapping tori, and prove an algebraic
high-dimensional analogue of \textsc{Mather}'s trick of turning a
mapping torus of a composite map through an angle of~$\pi$
(Lemmas~\ref{lem:mather_for_analogue} and~\ref{lem:mather}). In the
third part, we define a non-standard totalisation construction for
multi-complexes based on truncated products rather than direct sums or
products, and prove a simple vanishing criterion
(Proposition~\ref{prop:tr_tot_acyclic}). The main result is then
proved by a combination of multi-complex and mapping torus techniques.

\section*{Conventions}

We fix some notation to be used throughout the paper. Let $N$~be a
totally ordered indexing set with $n$ elements; we will mostly work
with the set $N = \{1,\, 2,\, \cdots,\, n\}$. The letters $A$, $B$
and~$S$ will denote subsets of~$N$ with cardinalities $a$, $b$
and~$s$, respectively, unless explicitly defined otherwise. Let $R$ be
an arbitrary unital ring. Modules will always be right modules if not
specified otherwise. Our complexes will be indexed cohomologically
(differentials increase the degree), and will thus be termed cochain
complexes. A cochain complex~$C$ is bounded above (\resp, bounded) if
$C^{n} = 0$ for all $n \gg 0$ (\resp, $|n| \gg 0$).  The cohomology
modules of an arbitrary cochain complex $C$ with differential~$d$ are
defined as usual as the quotient modules
\[H^{n}(C) = \big(\ker (d \colon C^{n} \rTo C^{n+1})\big) \big/ \big(
\mathrm{Im} (d \colon C^{n-1} \rTo C^{n})\big) \ .\] A map of cochain
complexes is called a quasi-isomorphism if it induces isomorphisms on
all cohomology modules. A standard result in homological algebra
asserts that a quasi-isomorphism between bounded-above cochain
complexes of projective modules is a homotopy equivalence. We will use
this result frequently throughout the paper.

\part{Homotopy commutative cubes}
\label{part:cubes}

A commutative square diagram of cochain complexes and cochain maps,
\begin{diagram}
  C_2 & \rTo^{f_{12,2}} & C_{12} \\
  \uTo<{f_{2,\emptyset}} && \uTo>{f_{12,1}} \\
  C_\emptyset & \rTo^{f_{1, \emptyset}} & C_{1}
\end{diagram}
can be considered as a three-fold cochain complex with commuting
differentials; we think of the cochain direction as the
last-coordinate direction, or $z$\nbd-direction, with the square
sitting at coordinates $x=0,1$ and $y=0,1$. The totalisation $T$ then
is a cochain complex given by
\[T^n = C_\emptyset^n \oplus \big( C_1^{n-1} \oplus C_2^{n-1} \big)
\oplus C_{12}^{n-2} \ ,\] with differential described by the following
matrix (suppressing zero entries):
\[D =
\begin{pmatrix}
  d_{\emptyset} \\
  f_{1, \emptyset} & -d_{1} \\
  f_{2, \emptyset} && -d_{2} \\
  & -f_{12,1} & f_{12,2} & d_{12}
\end{pmatrix}\] The construction can be extended to higher-dimensional
cubes in a standard manner. It is a different matter altogether what
happens for diagrams that commute up to homotopy only. For the square
above, suppose that $H$ is a homotopy between $f_{12,1} \circ f_{1,
  \emptyset}$ and~$f_{12,2} \circ f_{2, \emptyset}$. The above
totalisation construction, if applied verbatim, fails to result in a
cochain complex. However, if the matrix $D$ is modified to include the
specified homotopy
\[\begin{pmatrix}
  d_{\emptyset} \\
  f_{1, \emptyset} & -d_{1} \\
  f_{2, \emptyset} && -d_{2} \\
  \pm H & -f_{12,1} & f_{12,2} & d_{12}
\end{pmatrix}\] (the sign depending on the direction of the homotopy),
then we obtain a cochain complex again, and it seems justified to
consider this as the ``right'' totalisation construction for homotopy
commutative squares equipped with a choice of homotopy.

In this first part of the paper, we {\it define\/} a homotopy
commutative cube to be a collection of cochain maps, homotopies and
higher homotopies with the characteristic property that a totalisation
construction, similar to the one above and to be detailed below,
results in a cochain complex. In fact, for reasons of aesthetics we go
one step further: the data we consider consists of a collection of
graded modules, together with module maps $H_{B,A}$, indexed by pairs
$A \subseteq B$ of subsets of a given indexing set~$N$, with $H_{B,A}$
being of degree $1-\#A-\#B$. The main advantage is that the
differentials (which correspond to the case $A=B$) are now treated in
exactly the same way as all the other structure maps, leading to a
slightly more symmetric definition of homotopy commutative cubes. It
is then an easy exercise, solved in~\S\ref{sec:entries} below, to show
that the data defining a homotopy commutative cube consists of cochain
complexes, cochain maps and, for each two-dimensional face of the
cube, homotopies between the cochain maps. Data associated to
higher-dimensional faces should then be interpreted as higher
homotopies, or coherence data.

We then specialise to homotopy commutative cubes which have the same
cochain complex attached to each vertex, and in which the structure
maps depend on the direction in the cube only (and not on their
position within the cube). Here the main point is to realise that the
definition of homotopy commutativity leads to a consistent, meaningful
notion. This is recorded as Lemma~\ref{lem:consistency}.

Finally, we include a non-trivial example of a homotopy commutative
cube which is constructed from a commutative cubical diagram. Details
are contained in Theorem~\ref{thm:main_cube} and its proof. This
construction of ``derived'' homotopy-commutative cubical diagrams will
be an essential ingredient for the analysis of higher-dimensional
mapping tori in later parts of the paper.

\section{Total incidence numbers}
\label{sec:incidence-numbers}

\begin{definition}
  \label{def:iterated_incidence}
  Let $B = \{b_1 < b_2 < \ldots < b_b\} \subseteq N$ and write $d_i(B)
  = B \setminus \{b_i\}$, for $1 \leq i \leq b$. Given a subset $A
  \subset B$ there is a unique way to write $A = d_{i_1} d_{i_2}
  \cdots d_{i_{b-a}} (B)$ with $i_1 < i_2 < \ldots < i_{b-a}$, and we
  define
  \[[B:A] = (-1)^{b-a} (-1)^{i_1 + i_2 + \ldots + i_{b-a}} \ .\] For $A
  \not\subseteq B$ we define $[B:A] = 0$, and we set $[A:A] = 1$ for
  any~$A$. In either case we call $[B:A]$ the {\it total incidence
    number\/} of~$A$ and~$B$.
\end{definition}

From the simplicial identity $d_i \circ d_j (B)= d_{j-1} \circ d_i
(B)$ for $i<j$ we immediately infer that
\begin{multline}
  \label{eq:simplicial_identities}
  \big[A \amalg \{x,y\} : A \amalg \{x\}\big] \cdot \big[A \amalg
  \{x\} : A\big] \\ = - \big[A \amalg \{x,y\} : A \amalg \{y\}\big]
  \cdot \big[A \amalg \{y\} : A\big]
\end{multline}
for distinct elements $x,y \in N \setminus A$. We also have the
following combinatorial re-statement of the definition of total
incidence numbers:

\begin{lemma}
  \label{lem:interpretation}
  For $B \supseteq A$ we have $[B:A] = (-1)^\kappa$ where $\kappa$~is
  the number of pairs $(b,x) \in B \times (B \setminus A)$ with
  $b<x$. In particular,
  \[[B:\emptyset] = (-1)^{b(b-1)/2} = \begin{cases} 1 & \text{if } b
    \equiv 0,1 \mod 4 \\ -1 & \text{if } b \equiv 2,3 \mod
    4\end{cases}\] and $[B:B \setminus \{z\}] = (-1)^{\#\{b \in B
    \,|\, b < z\}}$ for $z \in B$.\qed
\end{lemma}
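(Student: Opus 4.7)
The plan is to unwind the combinatorics of the iterated face-map representation of $A \subseteq B$ and show that the exponent $b-a + i_1 + \cdots + i_{b-a}$ matches $\kappa$ modulo~$2$. The key observation is that when $i_1 < i_2 < \cdots < i_{b-a}$, the iterated expression $d_{i_1} d_{i_2} \cdots d_{i_{b-a}}(B)$ (evaluated right-to-left) removes exactly the elements sitting at positions $i_1, i_2, \ldots, i_{b-a}$ in the original listing $B = \{b_1 < b_2 < \ldots < b_b\}$. Indeed, applying $d_{i_{b-a}}$ first deletes $b_{i_{b-a}}$, and for each subsequent step $d_{i_k}$ with $i_k < i_{k+1} < \cdots$, the positions of elements with index less than $i_{k+1}$ are unaffected by the earlier deletions, so $d_{i_k}$ deletes~$b_{i_k}$. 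An easy induction (or direct application of the simplicial identity) makes this rigorous.

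Once this is in hand, I identify $B \setminus A = \{b_{i_1}, \ldots, b_{i_{b-a}}\}$ with $i_1 < \cdots < i_{b-a}$. For each $x = b_{i_k} \in B \setminus A$ the number of elements $b \in B$ strictly less than $x$ is precisely $i_k - 1$ (the elements $b_1, \ldots, b_{i_k - 1}$). Summing over $k$ gives
\begin{equation*}
\kappa = \sum_{k=1}^{b-a} (i_k - 1) = \Big(\sum_{k=1}^{b-a} i_k\Big) - (b-a),
\end{equation*}
so that $(-1)^\kappa = (-1)^{b-a}(-1)^{i_1 + \cdots + i_{b-a}} = [B:A]$, as required.

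The two ``in particular'' statements then drop out by specialisation. Taking $A = \emptyset$ forces $i_k = k$ and yields $\kappa = 0 + 1 + \cdots + (b-1) = b(b-1)/2$, whose parity is as stated. Taking $A = B \setminus \{z\}$ leaves only one term, so $\kappa = \#\{b \in B \mid b < z\}$ directly.

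I do not anticipate a real obstacle here; the only subtle point is keeping straight the convention for iterated face maps, specifically that under the strict inequality $i_1 < \cdots < i_{b-a}$ the indices refer to positions in the \emph{original} set $B$ rather than to positions in the intermediate sets produced during sequential deletion. Once that is pinned down the identification of exponents is a one-line calculation.
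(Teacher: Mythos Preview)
Your proof is correct. The paper itself offers no proof at all (the lemma is stated with a trailing \verb|\qed| and nothing more), so your argument supplies exactly the direct combinatorial verification the authors leave to the reader; the identification $B \setminus A = \{b_{i_1},\ldots,b_{i_{b-a}}\}$ and the computation $\kappa = \sum_k (i_k-1)$ are the natural steps and are carried out cleanly.
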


\begin{lemma}
  \label{lem:incidence_independent}
  Given sets $B \supseteq S \supseteq A$ and an element $z \in N
  \setminus B$, the products of total incidence numbers\footnote{We
    simplify notation and write $A \amalg z$ instead of the more
    precise $A \amalg \{z\}$.}
  \[[B:S][S:A] \quad \text{and} \quad [B \amalg z:S \amalg z][S \amalg
  z:A \amalg z]\] differ by a factor $\epsilon = \pm 1$ that is
  independent of~$S$ (that is, depends only on $B$, $A$ and~$z$). We
  also have $[B \amalg z:A \amalg z] = \epsilon \cdot [B:A]$, with the
  same factor~$\epsilon$.
\end{lemma}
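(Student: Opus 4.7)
The plan is to derive both identities from a single combinatorial comparison, based on the pair-counting formula $[B:A] = (-1)^{\kappa(B,A)}$ supplied by Lemma~\ref{lem:interpretation}, where I set $\kappa(B,A) := \#\{(b,x) \in B \times (B\setminus A) \colon b < x\}$. The whole proof then reduces to tracking how $\kappa$ changes under the operation of inserting a fresh element $z$ into both sides of the incidence pair.

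The central auxiliary identity I would first establish is the ``insertion rule''
\[ [B \amalg z : A \amalg z] = \epsilon \cdot [B:A], \qquad \epsilon := (-1)^{\#\{x \in B\setminus A \colon z < x\}}, \]
which already encodes the second assertion of the lemma. This is a direct count: since $z \notin B \supseteq A$, we have $(B \amalg z) \setminus (A \amalg z) = B \setminus A$, so the difference $\kappa(B\amalg z, A\amalg z) - \kappa(B,A)$ is exactly the number of pairs of the form $(z,x)$ with $x \in B\setminus A$ and $z < x$; potential new pairs of the form $(b,z)$ do not occur, because $z$ does not lie in $B \setminus A$.

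Applying this insertion rule separately to $(B,S)$ and to $(S,A)$ and multiplying yields
\[ [B \amalg z : S \amalg z][S \amalg z : A \amalg z] = (-1)^{\#\{x \in B\setminus S \colon z<x\} + \#\{x \in S\setminus A \colon z<x\}} \cdot [B:S][S:A]. \]
The hypothesis $B \supseteq S \supseteq A$ gives the disjoint decomposition $B \setminus A = (B \setminus S) \sqcup (S \setminus A)$, and intersecting both sides with $\{x \colon z < x\}$ preserves the disjointness. Hence the exponent on the right is precisely $\#\{x \in B \setminus A \colon z < x\}$, so the sign is the same $\epsilon$ as in the insertion rule for $(B,A)$. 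This simultaneously proves both claims and identifies the common factor $\epsilon$, which visibly depends only on $B$, $A$ and~$z$.

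The argument is essentially bookkeeping, so no substantive obstacle is expected; the one point that deserves care is the observation $(B \amalg z) \setminus (A \amalg z) = B \setminus A$, which relies on $z \notin B$ and is what keeps the insertion rule a product of a single sign rather than two.
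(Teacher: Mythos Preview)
Your proof is correct and follows essentially the same approach as the paper: both establish an ``insertion rule'' showing that $[B \amalg z : A \amalg z] = (-1)^{\#\{x \in B\setminus A : z < x\}} [B:A]$, then apply it to the two factors and use the disjoint decomposition $B \setminus A = (B \setminus S) \sqcup (S \setminus A)$ to see that the combined sign depends only on $B$, $A$, and~$z$. The only difference is that the paper derives the insertion rule directly from the index-shift definition of $[B:A]$ (tracking how the indices $i_\ell$ change when $z$ is inserted), whereas you invoke the pair-counting formula of Lemma~\ref{lem:interpretation}; this is a cosmetic variation, and your route is arguably a bit cleaner.
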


\begin{proof}
  Write $S = d_{i_1} d_{i_2} \cdots d_{i_{b-s}} (B)$ with $i_1 < i_2 <
  \ldots < i_{b-s}$, and $B = S \amalg \{z_1 < z_2 < \ldots <
  z_{b-s}\}$. Then clearly $S \amalg z = d_{j_1} d_{j_2} \cdots
  d_{j_{b-s}} (B \amalg z)$ where
  \[j_\ell = \begin{cases} i_\ell & \text{if } z_\ell < z \ , \\
    i_\ell + 1 & \text{if } z_\ell > z \ . \end{cases}\] Consequently,
  $[B \amalg z:S \amalg z] = (-1)^\kappa [B:S]$ where $\kappa$ is the
  number of elements in~$B \setminus S$ which are bigger than~$z$.

  \smallskip

  Now, re-defining some of the symbols above, write $A = d_{i_1}
  d_{i_2} \cdots d_{i_{s-a}} (S)$ with $i_1 < i_2 < \ldots < i_{s-a}$,
  and $S = A \amalg \{z_1 < z_2 < \ldots < z_{s-a}\}$. Then clearly $A
  \amalg z = d_{j_1} d_{j_2} \cdots d_{j_{s-a}} (S \amalg z)$ where
  \[j_\ell = \begin{cases} i_\ell & \text{if } z_\ell < z \ , \\
    i_\ell + 1 & \text{if } z_\ell > z \ . \end{cases}\] Consequently,
  $[S \amalg z:A \amalg z] = (-1)^\nu [S:A]$ where $\nu$ is the number
  of elements in~$S \setminus A$ which are bigger than~$z$.

  \smallskip

  In total, the two products of incidence numbers differ thus by a
  factor $(-1)^{\kappa + \nu}$. But $\kappa + \nu$ is the number of
  elements in $(B \setminus S) \amalg (S \setminus A) = B \setminus A$
  which are larger than~$z$. This number does not depend on~$S$.

  \smallskip
  
  The last assertion is the special case $S = B$ as $[B:A] =
  [B:B][B:A]$ and similarly $[B \amalg z:A \amalg z] = [B \amalg z:B
  \amalg z] [B \amalg z:A \amalg z]$.
\end{proof}

\section{$N$-diagrams and their totalisation}
\label{sec:leitch}

\begin{definition}
  \label{def:pre-leitch}
  An {\it $N$\nbd-diagram\/} consists of the following
  (non-functorial) data:
  \begin{itemize}
  \item for each $A \subseteq N$ a graded $R$\nbd-module $F(A) =
    \bigoplus_{k \in \bZ} F(A)^{k}$;
  \item for each inclusion $A \subseteq B$ a graded module
    homomorphism \[H_{B,A} \colon F(A) \rTo F(B)\] of degree $a-b+1$.
  \end{itemize}
  We write $d = d_A$ in place of~$H_{A,A}$, and for $b-a=1$ we denote
  $H_{B,A}$ by $f_{B,A}$. (The data defining an $N$\nbd-diagram is
  not required to satisfy any compatibility conditions.)
\end{definition}

\begin{definition}
  \label{def:totalisation}
  Let $F$ be an $N$\nbd-diagram. The {\it totalisation of~$F$}
  consists of the graded $R$\nbd-module $\tot(F)$ given by
  \[\tot(F)^\ell = \bigoplus_{A \subseteq N} F(A)^{\ell-a} \ ,\] and module
  homomorphisms \[D(F) = D = (D_{B,A})_{A,B \subseteq N} \colon
  \tot(F)^\ell \rTo \tot(F)^{\ell + 1}\] given by \[D_{B,A}
  = \begin{cases} (-1)^{ab} [B:A] \cdot H_{B,A} & \text{if } A
    \subseteq B \ , \\ 0 & \text{otherwise.} \end{cases}\]
\end{definition}

We can, and will, think of~$D(F)$ as a matrix with columns and rows
indexed by the subsets of~$N$; note that $D_{B,A} \colon F(A)^{\ell-a}
\rTo F(B)^{(\ell+1)-b}$ is a map of degree~$a-b+1$. The
composition \[D(F) \circ D(F) \colon \tot(F)^\ell \rTo
\tot(F)^{\ell+2}\] is calculated by matrix multiplication;
explicitly\footnote{the variable over which the sum is taken is
  sometimes marked by a dot underneath},
\begin{equation}
  \label{eq:composition}
  \big( D(F) \circ D(F) \big)_{B,A} = \sum_{A \subseteq \udot S \subseteq B}
  (-1)^{bs} (-1)^{sa} [B:S][S:A] \cdot H_{B,S} \circ H_{S,A} \ .
\end{equation}

\section{Homotopy commutative $N$-cubes}
\label{sec:entries}

\begin{definition}
  \label{def:homotopy_commutative}
  A {\it homotopy commutative $N$\nbd-cube\/} is an $N$\nbd-diagram
  $F$ such that its totalisation is a cochain complex of
  $R$\nbd-modules with differential~$D(F)$ (that is, such that $D(F)
  \circ D(F) = 0$).
\end{definition}

For the remainder of this section we consider a homotopy commutative
$N$\nbd-cube~$F$ with totalisation $T = \tot(F)$ and associated
differential $D=D(F)$.

\begin{lemma}
  The graded module~$F(A)$ is a cochain complex of $R$\nbd-modules
  with differential~$d_A$.
\end{lemma}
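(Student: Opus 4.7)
The plan is to read off the conclusion directly from the diagonal block of the equation $D(F) \circ D(F) = 0$, which is guaranteed by the hypothesis that $F$ is a homotopy commutative $N$\nbd-cube.

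First, I would note that $d_A = H_{A,A}$ has degree $a - a + 1 = 1$ by Definition~\ref{def:pre-leitch}, so it is a candidate differential for the graded module $F(A)$. The only remaining point is to verify $d_A \circ d_A = 0$.

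To that end, I would specialise formula~\eqref{eq:composition} to the case $B = A$. The sum defining $(D(F) \circ D(F))_{A,A}$ runs over $S$ with $A \subseteq S \subseteq A$, so it has the single summand $S = A$. Since $[A:A] = 1$ and the sign $(-1)^{a \cdot a}(-1)^{a \cdot a} = 1$, the summand reduces to $H_{A,A} \circ H_{A,A} = d_A \circ d_A$. The hypothesis that $F$ is a homotopy commutative cube (Definition~\ref{def:homotopy_commutative}) gives $D(F) \circ D(F) = 0$, and in particular its $(A,A)$\nbd-component vanishes, yielding $d_A \circ d_A = 0$.

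There is no real obstacle here; the argument is essentially bookkeeping, and the symmetric treatment of differentials and higher homotopies in the definition of an $N$\nbd-diagram makes the diagonal entries of $D \circ D$ encode exactly the cochain complex axiom for each $F(A)$. The same technique will presumably be used in the following subsection to interpret off\nbd-diagonal entries as the cochain, homotopy, and higher coherence conditions.
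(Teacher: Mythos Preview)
Your argument is correct and follows the same approach as the paper: both extract $d_A \circ d_A = 0$ from the $(A,A)$-entry of $D \circ D = 0$ via formula~\eqref{eq:composition}. Your version is simply more explicit about the sign and incidence-number bookkeeping.
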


\begin{proof}
  By definition of a homotopy commutative $N$\nbd-cube we have $D
  \circ D = 0$, and so in particular, using~\eqref{eq:composition},
  $d_A \circ d_A = (D \circ D)_{(A,A)} = 0$.
\end{proof}

\begin{lemma}
  For $b-a = 1$ the map $f_{B,A} = H_{B,A} \colon F(A) \rTo F(B)$ is a
  cochain map.
\end{lemma}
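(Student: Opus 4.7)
The strategy is to extract the desired identity directly from the condition $D\circ D = 0$, which holds by the assumption that $F$ is a homotopy commutative $N$-cube. Concretely, I would read off the matrix entry $(D\circ D)_{B,A}$ of the composition using formula~\eqref{eq:composition} and observe that, because $b-a = 1$, the sum over intermediate subsets $S$ with $A \subseteq S \subseteq B$ has only two terms, $S = A$ and $S = B$. This gives an equation of the form
\begin{equation*}
  0 \;=\; (D\circ D)_{B,A} \;=\; \alpha \cdot f_{B,A}\circ d_{A} \;+\; \beta \cdot d_{B}\circ f_{B,A}
\end{equation*}
with explicit scalar coefficients $\alpha, \beta \in \{\pm 1\}$.

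Next I would compute those coefficients. With $b = a+1$, the $S = A$ term contributes the sign $(-1)^{ba}(-1)^{aa}[B:A][A:A] = (-1)^{a}[B:A]$, while the $S = B$ term contributes $(-1)^{bb}(-1)^{ba}[B:B][B:A] = (-1)^{a+1}[B:A]$. Factoring out the nonzero integer $[B:A] = \pm 1$ (nonzero because $A \subseteq B$, by Lemma~\ref{lem:interpretation}), the vanishing of $(D\circ D)_{B,A}$ becomes
\begin{equation*}
  (-1)^{a}\bigl(f_{B,A}\circ d_{A} - d_{B}\circ f_{B,A}\bigr) \;=\; 0\ ,
\end{equation*}
which is exactly the assertion that $f_{B,A}$ commutes with the differentials. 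A separate but trivial check confirms that $f_{B,A}$ has degree $a - b + 1 = 0$, so it qualifies as a cochain map.

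The only subtle step is the sign bookkeeping, but this is entirely mechanical: the two signs agree up to the expected $\pm 1$ because $(b-a)$ is odd, so the mismatch between $d_{B}f_{B,A}$ and $f_{B,A}d_{A}$ is exactly the one needed to make the vanishing of the column-$A$, row-$B$ entry of $D\circ D$ equivalent to the cochain-map relation. There is no genuine obstacle here; the statement is essentially a direct unpacking of the definition of a homotopy commutative $N$-cube, and the proof is a two-term instance of the general formula~\eqref{eq:composition}.
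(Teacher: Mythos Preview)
Your proposal is correct and follows essentially the same approach as the paper: both extract the $(B,A)$\nbd-entry of $D \circ D = 0$ from formula~\eqref{eq:composition}, observe that only the two intermediate sets $S=A$ and $S=B$ contribute, and check that the resulting signs differ by~$-1$ so that the vanishing becomes $d_B \circ f_{B,A} = f_{B,A} \circ d_A$. Your sign computation is just a slightly more explicit variant of the paper's observation that $(-1)^{b^{2}} = -(-1)^{a^{2}}$ when $b-a=1$.
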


\begin{proof}
  The $(B,A)$\nbd-entry of $D \circ D$ is given, according
  to~\eqref{eq:composition}, by
  \[(-1)^{b^2} (-1)^{ba} [B:A] \cdot d_B \circ f_{B,A} + (-1)^{ba}
  (-1)^{a^2} [B:A] \cdot f_{B,A} \circ d_A \ .\] Since $D \circ D =
  0$, and since $(-1)^{b^2} = - (-1)^{a^2}$ this implies that
  $f_{B,A}$ is a cochain map as claimed.
\end{proof}

\begin{lemma}
  Let $B = A \amalg \{z_0 < z_1\}$, and write $Z_i = A \amalg \{z_i\}
  = B \setminus \{z_{1-i}\}$. Then $H_{B,A}$ is a homotopy from
  $f_{B,Z_0} \circ f_{Z_0, A}$ to $f_{B,Z_1} \circ f_{Z_1, A}$ so that
  \[d_B \circ H_{B,A} + H_{B,A} \circ d_A = f_{B,Z_1} \circ f_{Z_1,A}
  - f_{B,Z_0} \circ f_{Z_0, A} \ .\]
\end{lemma}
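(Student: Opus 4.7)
The plan is to read off the identity directly from the vanishing of the $(B,A)$\nbd-entry of $D(F) \circ D(F)$, which is available because $F$ is a homotopy commutative $N$\nbd-cube. Apply formula~\eqref{eq:composition} with the given $B$ and $A$ satisfying $b-a=2$. The subsets $S$ with $A \subseteq S \subseteq B$ are exactly $A$, $Z_0$, $Z_1$, $B$, so the sum has just four summands. The goal is to show that after extracting common sign factors, this is precisely the claimed homotopy relation.

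First I would compute the prefactors $(-1)^{bs}(-1)^{sa}$ for each of the four values of $s \in \{a, a+1, a+1, b\}$. Using $b=a+2$, each exponent $bs+sa$ is even, so every prefactor equals $+1$; the only surviving signs come from the incidence numbers. The four summands are therefore
\[
[B:A]\cdot H_{B,A}\circ d_A,\quad [B:Z_0][Z_0:A]\cdot f_{B,Z_0}\circ f_{Z_0,A},
\]
\[
[B:Z_1][Z_1:A]\cdot f_{B,Z_1}\circ f_{Z_1,A},\quad [B:A]\cdot d_B\circ H_{B,A}.
\]

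The crux of the argument is the sign identity $[B:Z_0][Z_0:A]=[B:A]$ (and consequently $[B:Z_1][Z_1:A]=-[B:A]$ by the simplicial identity~\eqref{eq:simplicial_identities}). I would verify this directly from the combinatorial interpretation of Lemma~\ref{lem:interpretation}: on the one hand $[B:A]=(-1)^{\kappa_0+\kappa_1}$ where $\kappa_i=\#\{b\in B : b<z_i\}$, while on the other hand $[B:Z_0]=(-1)^{\kappa_1}$ and $[Z_0:A]=(-1)^{\#\{b\in Z_0 : b<z_0\}}$. Since $z_0<z_1$, no element of $B\setminus A=\{z_0,z_1\}$ lies strictly below $z_0$, so $\#\{b\in Z_0:b<z_0\}=\#\{b\in A:b<z_0\}=\kappa_0$, and the two expressions match.

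Substituting this back, the vanishing $(D\circ D)_{B,A}=0$ becomes
\[
[B:A]\cdot\bigl(d_B\circ H_{B,A}+H_{B,A}\circ d_A+f_{B,Z_0}\circ f_{Z_0,A}-f_{B,Z_1}\circ f_{Z_1,A}\bigr)=0.
\]
Dividing by the non-zero scalar $[B:A]=\pm1$ yields the claimed homotopy identity. The only real obstacle is the sign bookkeeping for the middle terms; once the identity $[B:Z_0][Z_0:A]=[B:A]$ is in hand, the rest is routine.
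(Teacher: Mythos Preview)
Your proof is correct and follows essentially the same approach as the paper: expand $(D\circ D)_{B,A}=0$ into its four summands, observe that the prefactors $(-1)^{s(b+a)}$ are all~$+1$ since $b+a=2a+2$ is even, and reduce to the incidence-number identity $[B:Z_0][Z_0:A]=[B:A]=-[B:Z_1][Z_1:A]$. The only cosmetic difference is that the paper verifies this identity via the representation $A=d_i d_j(B)$ with $i<j$, whereas you use the combinatorial description of Lemma~\ref{lem:interpretation} for one product and the simplicial identity~\eqref{eq:simplicial_identities} for the other; both routes are equally short.
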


\begin{proof}
  Again we will use that the $(B,A)$\nbd-entry of $D \circ D$ has to
  be trivial. That is, we must have
  \begin{multline*}
    (-1)^{b} (-1)^{ba} [B:A] \cdot d_B \circ H_{B,A} \\
    + (-1)^{b(b-1)} (-1)^{(a+1)a} [B:Z_0] [Z_0:A] \cdot f_{B,Z_0}
    \circ f_{Z_0,A} \\
    + (-1)^{b(b-1)} (-1)^{(a+1)a} [B:Z_1] [Z_1:A] \cdot f_{B,Z_1}
    \circ f_{Z_1,A} \\
    + (-1)^{ba} (-1)^{a} [B:A] \cdot H_{B,A} \circ d_A = 0 \ .
  \end{multline*}
  (We have used $(-1)^{k^2} = (-1)^k$ and $[S:S] = 1$.) Now $(-1)^{b}
  (-1)^{ba} = (-1)^{ba} (-1)^{a} = 1$ as $a$ and~$b$ have the same
  parity, and we similarly have $(-1)^{b(b-1)} (-1)^{(a+1)a} = 1$.
  
  \smallskip

  Write $A = d_i d_j (B)$ with $i<j$. Then $[B:Z_0] = (-1)^j$ and
  $[Z_0:A] = (-1)^i$ so that $[B:Z_0][Z_0:A] = (-1)^{j+i} = [B:A]$.

  \smallskip

  But we also have $A = d_{j-1} d_i (B)$; this implies $[B:Z_1] =
  (-1)^i$ and $[Z_1:A] = (-1)^{j-1}$ so that $[B:Z_1][Z_1:A] =
  (-1)^{j+i-1} = -[B,A]$.

  \smallskip

  Cancelling the common factor of~$[B:A]$ and re-arranging the terms
  now gives
  \[d_B \circ H_{B,A} + H_{B,A} \circ d_A = f_{B,Z_1} \circ f_{Z_1,A}
  - f_{B,Z_0} \circ f_{Z_0, A} \ ,\] proving the Lemma.
\end{proof}

In this vein one can work out conditions on the maps $H_{B,A}$ for
$b-a \geq 3$; these maps provide what are sometimes called higher
(coherent) homotopies.

\section{Special $N$-diagrams and special $N$-cubes}
\label{sec:special-N-cubes}

\begin{definition}
  \label{def:special_diagrams_and_cubes}
  An $N$\nbd-diagram is called {\it special\/} if the graded
  $R$\nbd-module $F(A)$ does not depend on~$A$ (that is, if $F(A) =
  F(B)$ for all $A,B \subseteq N$), and if the maps $H_{B,A}$ depend
  only on $B \setminus A$. We will usually use the following notation
  for a special $N$-diagram:
  \begin{itemize}
  \item $C$ is the graded $R$\nbd-module $F(A)$, for any $A \subseteq
    N$;
  \item $d = d_A$, for any $A \subseteq N$;
  \item $f_k = f_{\{k\},\emptyset}$, for $k \in N$;
  \item $H_S = H_{B,A}$, for any $A \subseteq B$ with $B \setminus A =
    S$, $s \geq 2$.
  \end{itemize}
  Such data $C$, $d$, $f_k$ and $H_S$ determine, conversely, a
  special $N$\nbd-diagram by setting $d_A = d$, $H_{B,A} = H_{B
    \setminus A}$ if $b-a \geq 2$, and $f_{B,A} = f_{B \setminus A}$
  for $b-a = 1$.

  A {\it special $N$\nbd-cube\/} is a special $N$\nbd-diagram which is
  also a homotopy com\-muta\-tive $N$\nbd-cube.
\end{definition}

As shown in \S\ref{sec:entries} a special $N$\nbd-cube has the
property that $C$~is a cochain complex with differential~$d$, that the
maps $f_k \colon C \rTo C$ are cochain maps, and that $H_{\{k<\ell\}}$
is a homotopy from $f_\ell \circ f_k$ to $f_k \circ f_\ell$. This was
deduced from the equation $D(F) \circ D(F) = 0$ which is the defining
property of an $N$\nbd-cube.

\medbreak

Let us now analyse the data specifying higher homotopies in more
detail. Let $T$ denote the totalisation of a special $N$\nbd-diagram,
with associated map $D = D(F)$. Let $B \supseteq A$ and $C$ be given
with $C \cap B = \emptyset$ and $b-a \geq 3$. Then considering the
entries $(B,A)$ and $(B \amalg C, A \amalg C)$ of the equation $D
\circ D = 0$ we obtain two different conditions involving the map
$H_{B \setminus A}$, and they should be consistent in order to make
the definition of a special $N$\nbd-cube meaningful. It is clearly
enough to do so for $C$~a one element set.

\begin{lemma}
  \label{lem:consistency}
  Given sets $B \supseteq A$ and an element $z \in N \setminus B$, the
  two maps
  \[(D \circ D)_{B,A} \quad \text{and} \quad (D \circ D)_{B \amalg z,
    A \amalg z}\] agree up to sign.
\end{lemma}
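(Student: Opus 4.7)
The plan is to unfold both sides using the explicit formula \eqref{eq:composition} and set up a bijection between the index sets of the two resulting sums, showing that under this bijection the summands match up to a global sign factor that does not depend on the summation variable.

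First I would write
\[(D \circ D)_{B,A} = \sum_{A \subseteq \udot S \subseteq B} (-1)^{bs}(-1)^{sa}\,[B:S][S:A] \cdot H_{B,S} \circ H_{S,A}\]
and the analogous sum for $(B \amalg z, A \amalg z)$. Subsets $S'$ with $A \amalg z \subseteq S' \subseteq B \amalg z$ are in bijection with subsets $S$ satisfying $A \subseteq S \subseteq B$ via $S' = S \amalg z$, and $s' = s+1$. Substituting into the second sum gives
\[(D \circ D)_{B \amalg z, A \amalg z} = \sum_{A \subseteq \udot S \subseteq B} (-1)^{(b+1)(s+1)}(-1)^{(s+1)(a+1)}\,[B \amalg z:S \amalg z][S \amalg z:A \amalg z] \cdot H_{B \amalg z, S \amalg z} \circ H_{S \amalg z, A \amalg z}.\]

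Next I would use the specialness hypothesis: $H_{B \amalg z, S \amalg z} = H_{(B \amalg z)\setminus(S \amalg z)} = H_{B \setminus S} = H_{B,S}$, and similarly $H_{S \amalg z, A \amalg z} = H_{S,A}$. Hence the operator factors $H_{B,S} \circ H_{S,A}$ are identical in the two sums, and it only remains to compare the scalar prefactors summand by summand. The incidence-number factor is handled directly by Lemma~\ref{lem:incidence_independent}, which gives $[B \amalg z:S \amalg z][S \amalg z:A \amalg z] = \epsilon \cdot [B:S][S:A]$ with $\epsilon \in \{\pm 1\}$ independent of $S$. For the remaining sign, a routine parity calculation gives
\[(-1)^{(b+1)(s+1)}(-1)^{(s+1)(a+1)} = (-1)^{bs+sa} \cdot (-1)^{a+b+2s+2} = (-1)^{bs+sa} \cdot (-1)^{a+b},\]
so the extra factor is $(-1)^{a+b}$, again independent of~$S$.

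Combining these two observations, every summand in $(D\circ D)_{B\amalg z, A\amalg z}$ equals $\epsilon \cdot (-1)^{a+b}$ times the corresponding summand in $(D\circ D)_{B,A}$. Since this overall factor is the same for every $S$, it pulls out of the sum, giving $(D\circ D)_{B\amalg z, A\amalg z} = \epsilon\,(-1)^{a+b} (D\circ D)_{B,A}$, which is exactly the statement. The only real obstacle is the careful sign bookkeeping in the prefactor comparison; Lemma~\ref{lem:incidence_independent} was designed precisely to absorb the potentially $S$-dependent part of the incidence-number contribution, so once it is invoked the remaining parity check is mechanical.
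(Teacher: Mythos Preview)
Your proof is correct and follows essentially the same approach as the paper: set up the bijection $S \leftrightarrow S \amalg z$, use specialness to identify the operator factors, invoke Lemma~\ref{lem:incidence_independent} for the incidence numbers, and check that the remaining $(-1)$-power is independent of~$S$. The only cosmetic difference is that you compute the extra factor explicitly as $(-1)^{a+b}$, whereas the paper argues by cases on the parity of $a+b$; these are equivalent.
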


\begin{proof}
  For $B \supseteq S \supseteq A$ the summands
  in~\eqref{eq:composition} corresponding to $S$ and~$S \amalg z$,
  respectively, agree up to sign since our homotopy commutative
  $N$\nbd-cube is a special $N$\nbd-cube. So it remains to check that
  the difference of signs does not depend on~$S$. In view of the
  previous Lemma it is enough to check that the difference in parity
  of $(b+1)(s+1)+(s+1)(a+1) = (s+1)(b+a+2)$ and $bs+sa = s(b+a)$ does
  not depend on~$s$. But if $b+a$ is even both numbers are even, while
  if $a+b$ is odd the two numbers have different parity, independent
  of~$S$.
\end{proof}

\begin{corollary}
  \label{cor:criterion}
  A cochain complex $C$ with differential~$d$, cochain maps $f_k
  \colon C \rTo C$ for $k \in N$, and maps $H_S \colon C \rTo C$ of
  graded modules of degree $1-s$, for $s \geq 2$, determine a special
  $N$\nbd-cube if and only if for all $S = \{k < \ell\}$ the
  map~$H_{\{k<\ell\}}$ is a homotopy from $f_\ell \circ f_k$ to~$f_k
  \circ f_\ell$, and for every subset $S \subseteq N$, $s \geq 3$, we
  have
  \begin{multline*}
    0 = (-1)^s [S:\emptyset] \cdot d \circ H_S + [S:\emptyset] \cdot
    H_S \circ d \\
    + \sum_{z \in S} [S:S \setminus z][S \setminus z:\emptyset] \cdot
    f_z \circ H_{S \setminus z} \\
    + (-1)^s \sum_{z \in S} [S:z] \cdot H_{S \setminus z} \circ f_z \\
    + \sum_{\substack {\udot T \subseteq S \\ t \geq 2 \leq s-t}}
    (-1)^{ts} [S:T][T:\emptyset] \cdot H_{S \setminus T} \circ H_T \ ,
  \end{multline*}
  where $t = \# T$ in the last sum (so that $T$~varies over all
  subsets of~$S$ such that both $T$ and~$S \setminus T$ have at least
  two elements).
\end{corollary}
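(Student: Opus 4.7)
By Definition~\ref{def:special_diagrams_and_cubes}, the given data determines a special $N$\nbd-cube exactly when the totalisation differential $D = D(F)$ satisfies $D \circ D = 0$, that is, $(D \circ D)_{B,A} = 0$ for every pair $A, B \subseteq N$. The strategy is to first reduce this condition to the single family of equations indexed by pairs of the form $(S, \emptyset)$, and then to unfold each such equation explicitly using~\eqref{eq:composition}, matching the result term by term with the identity displayed in the statement.

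The reduction step uses Lemma~\ref{lem:consistency} iteratively. Given a nonempty $A \subseteq B$, pick $z \in A$ and write $B = B' \amalg z$, $A = A' \amalg z$ with $z \notin B'$; the lemma then gives $(D \circ D)_{B,A} = \pm (D \circ D)_{B',A'}$, so after removing each element of $A$ in turn we obtain $(D \circ D)_{B,A} = \pm (D \circ D)_{B \setminus A, \emptyset}$. Consequently $D \circ D = 0$ holds precisely when $(D \circ D)_{S, \emptyset} = 0$ for every $S \subseteq N$.

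Second, I would compute $(D \circ D)_{S, \emptyset}$ directly from~\eqref{eq:composition}. Since the $N$\nbd-diagram is special, the summand indexed by $T$ equals
\begin{equation*}
  (-1)^{st} [S:T][T:\emptyset] \cdot H_{S \setminus T} \circ H_{T},
\end{equation*}
under the conventions $H_{\emptyset} = d$ and $H_{\{z\}} = f_z$. I would then partition the sum according to the pair $(|T|, |S \setminus T|)$, separating the four ``boundary'' families $(s,0)$, $(0,s)$, $(s-1,1)$, $(1,s-1)$ from the ``interior'' terms with both $|T|, |S \setminus T| \geq 2$. A short verification of signs — using $(-1)^{s^{2}} = (-1)^{s}$, $(-1)^{s(s-1)} = 1$, $[\{z\}:\emptyset] = 1$, and $[S:S] = [\emptyset:\emptyset] = 1$ — identifies the five listed groups of summands with the five lines of the displayed formula. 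For $s = 0$ the identity reduces to $d \circ d = 0$, for $s = 1$ to $f_z \circ d - d \circ f_z = 0$, and for $s = 2$ to the homotopy relation between $f_\ell \circ f_k$ and $f_k \circ f_\ell$ derived in the third lemma of~\S\ref{sec:entries}; the first two are automatic from the hypotheses on $C$ and the $f_k$, while the third is recorded as the first bullet of the corollary. For $s \geq 3$ the equation $(D \circ D)_{S,\emptyset} = 0$ is, after these manipulations, literally the displayed identity.

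The main obstacle is nothing more than careful sign bookkeeping: matching $(-1)^{st}[S:T][T:\emptyset]$ to the stated coefficients case by case, and verifying that no ``hidden'' case is missed when $|T|$ sits at the boundary values $0, 1, s-1, s$. No new homological input is required beyond the reduction provided by Lemma~\ref{lem:consistency} and the formula~\eqref{eq:composition}.
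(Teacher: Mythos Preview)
Your proposal is correct and follows essentially the same approach as the paper's own proof: reduce $D \circ D = 0$ to the family $(D \circ D)_{S,\emptyset} = 0$ via Lemma~\ref{lem:consistency}, then unpack~\eqref{eq:composition} and sort the terms according to $|T|$. Your write-up is simply more explicit about the sign bookkeeping than the paper, which leaves that step to the reader.
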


\begin{proof}
  Let $D$ be the map associated to the totalisation of the special
  $N$\nbd-dia\-gram determined by the given data. This diagram is a
  special $N$\nbd-cube if and only if $D \circ D = 0$, which happens
  if and only if $(D \circ D)_{B,A} = 0$ for every pair of sets $B
  \supseteq A$. In view of Lemma~\ref{lem:consistency} this is
  equivalent to the condition $(D \circ D)_{S, \emptyset} = 0$ for
  every $S \subseteq N$. Now for $s=0$ this in turn means that $d$ is
  a differential, for $s=1$ this is equivalent to the $f_k$~being
  cochain maps, and for $s \geq 2$ this is a reformulation of the
  hypotheses on the maps~$H_S$ (bearing formula~\eqref{eq:composition}
  in mind).
\end{proof}

\section{Main examples}
\label{sec:explicit}

\subsection*{Commutative diagrams} A cochain complex~$C$
of $R$\nbd-modules with differential~$d$ together with a collection of
pairwise commuting cochain maps $f_k \colon C \rTo C$ for $k \in N$
defines a special $N$\nbd-cube upon setting $H_S = 0$ for $s \geq
2$. That is, a commutative cubical diagram of self-maps of~$C$ can be
considered in an obvious way as a homotopy commutative
$N$\nbd-cube. We will refer to such a special $N$\nbd-cube as a {\it
  trivial\/} one, and use the symbol $\triv(C; f_1,\, f_2,\, \cdots,\,
f_n)$ to denote the corresponding $N$\nbd-diagram. The totalisation of
a trivial cube is in fact the totalisation of a multi-complex, see
Proposition~\ref{prop:trivial_diagram_is_totalisation} below.

\subsection*{Mapping cones} For $N = \{1\}$ a special
$N$\nbd-cube is specified by a cochain map $f_1 \colon C \rTo C$. Its
totalisation has associated differential
\[D =
\begin{pmatrix}
  d & 0 \\ f_1 & -d
\end{pmatrix} \ ,\] and is one version of the mapping cone of~$f_1$,
up to shift. (This differs from other versions in sign and indexing
conventions.)

\subsection*{Square diagrams} Slightly more interesting
is the case of $N = \{1<2\}$, two self-maps $f_1, f_2 \colon C \rTo C$
and a specified homotopy $H = H_N \colon f_2 \circ f_1 \simeq f_1
\circ f_2$ so that $d \circ H_N - H_N \circ d = f_1 \circ f_2 - f_2
\circ f_1$. This data describes a special $N$\nbd-diagram; the map $D$
associated to its totalisation takes the form (omitting trivial
entries, but showing indexing subsets of~$N$)
\[D = \bordermatrix{ &\sst \emptyset & \sst \{1\} & \sst \{2\} & \sst
  N \cr \sst \emptyset & d \cr \sst \{1\} & f_1 & -d \cr \sst \{2\} &
  f_2 && -d \cr \sst N & -H & -f_2 & f_1 & d}\] which is easily shown
to satisfy $D \circ D = 0$ by direct calculation. That is, the given
data does in fact specify a special $\{1<2\}$\nbd-cube. Up to shift,
sign and naming conventions, the totalisation of a $\{1<2\}$\nbd-cube
is the ``mapping $2$\nbd-torus analogue'' discussed by the authors in
\cite[\S{}III.7]{square}.

\subsection*{Derived homotopy commutative cubes} Our most complicated
example shows how one can construct (special) homotopy commutative
diagrams from strictly commutative ones, roughly speaking by taking a
commutative cubical diagram and replacing all occurring cochain
complexes by homotopy equivalent ones. The construction is a main
ingredient for manipulations of mapping tori later in the paper.

\begin{theorem}
  \label{thm:main_cube}
  Let $D$ be a cochain complex of $R$\nbd-modules with
  differential~$d$. Let $h_k \colon D \rTo D$, $k \in N$, be a
  collection of pairwise commuting cochain maps. Let $C$ be another
  cochain complex of $R$\nbd-modules, with differential denoted by~$d$
  as well, and let $\alpha \colon C \rTo D$ and $\beta \colon D \rTo
  C$ be cochain maps. Suppose that $G$ is a homotopy from $\id_D$
  to~$\alpha \circ \beta$ so that $d \circ G + G \circ d = \alpha
  \circ \beta - \id_D$. Then $C$, $d$ and the following data define a
  special $N$\nbd-cube\footnote{leaving out the composition symbol
    $\circ$ occasionally, as is standard}:
  \begin{itemize}
  \item $f_k = \beta \circ h_k \circ \alpha$, for $k \in N$;
  \item $H_{\{k < \ell\}} = \beta \circ ( h_k G h_\ell - h_\ell G h_k)
    \circ \alpha$, for $\{k < \ell\} \subseteq N$;
  \item $H_S = \beta \circ \sum_{\sigma \in \Sigma(S)} \sgn(\sigma)
    h_{\sigma(z_1)} G h_{\sigma(z_2)} G h_{\sigma(z_3)} \ldots G
    h_{\sigma(z_s)} \circ \alpha$, for every set $S = \{z_1 < z_2 <
    \ldots < z_s\} \subseteq N$ with $s \geq 3$.
  \end{itemize}
\end{theorem}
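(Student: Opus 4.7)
The plan is to verify the hypotheses of Corollary \ref{cor:criterion}: (i) each $f_k$ is a cochain map; (ii) each $H_{\{k<\ell\}}$ is a homotopy from $f_\ell \circ f_k$ to $f_k \circ f_\ell$; and (iii) the displayed identity there for every $S \subseteq N$ with $s \geq 3$. Item (i) is immediate, since $\alpha$, $\beta$ and the $h_k$ are all cochain maps. For (ii), a direct computation using $dG + Gd = \alpha\beta - \id_D$ and the commutativity $h_k h_\ell = h_\ell h_k$ gives
\begin{equation*}
d H_{\{k<\ell\}} + H_{\{k<\ell\}} d = \beta \bigl( h_k (\alpha\beta - \id) h_\ell - h_\ell (\alpha\beta - \id) h_k \bigr) \alpha = f_k f_\ell - f_\ell f_k,
\end{equation*}
as required.

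For item (iii), I write $S = \{z_1 < \cdots < z_s\}$ and, for $\sigma \in \Sigma(S)$, abbreviate $W(\sigma) = h_{\sigma(z_1)} G h_{\sigma(z_2)} G \cdots G h_{\sigma(z_s)}$, so that $H_S = \beta \bigl( \sum_\sigma \sgn(\sigma) W(\sigma) \bigr) \alpha$. The main calculation is that pushing $d$ through a single word $\beta W(\sigma) \alpha$, using the cochain-map property of $\alpha$, $\beta$ and each $h_k$ and substituting $dG = (\alpha\beta - \id) - Gd$ into every internal $G$ in turn, produces a telescoping identity expressing $d \circ \beta W(\sigma) \alpha \pm \beta W(\sigma) \alpha \circ d$ as a signed sum, over the $s-1$ internal positions $j$, of words of the form
\begin{equation*}
\beta \, h_{\sigma(z_1)} G \cdots G h_{\sigma(z_j)} \bigl(\alpha\beta - \id\bigr) h_{\sigma(z_{j+1})} G \cdots G h_{\sigma(z_s)} \alpha.
\end{equation*}
Summed against $\sgn(\sigma)$, the $(-\id)$ contributions produce adjacent factors $h_{\sigma(z_j)} h_{\sigma(z_{j+1})}$; pairing each $\sigma$ with the permutation obtained by transposing $\sigma(z_j)$ and $\sigma(z_{j+1})$ shows that all such contributions cancel, by antisymmetry combined with $h_a h_b = h_b h_a$. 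The $\alpha\beta$ contribution at position $j$ splits into the composition of the two shorter words $\beta h_{\sigma(z_1)} G \cdots G h_{\sigma(z_j)} \alpha$ and $\beta h_{\sigma(z_{j+1})} G \cdots G h_{\sigma(z_s)} \alpha$; regrouping the sum over $\sigma$ by the subset $T = \{\sigma(z_1), \ldots, \sigma(z_j)\}$ and the orderings it induces on $T$ and $S \setminus T$, these contributions assemble precisely as $H_{S\setminus T} \circ H_T$ when $2 \leq t \leq s-2$, as $f_z \circ H_{S \setminus z}$ when $j = 1$, and as $H_{S \setminus z} \circ f_z$ when $j = s-1$, thereby reproducing all summands in the identity of Corollary \ref{cor:criterion}.

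The main obstacle will be sign bookkeeping. One has to match (a) the Koszul signs generated by pushing $d$ past the copies of the degree $-1$ map $G$ that lie to the left of position $j$; (b) the factorisation of $\sgn(\sigma)$ as (sign of the unshuffle of $S$ into the ordered pair $(T, S \setminus T)$) $\times \sgn(\sigma|_T) \times \sgn(\sigma|_{S \setminus T})$; and (c) the combinatorial factors $(-1)^{ts}[S:T][T:\emptyset]$, $[S:S \setminus z][S \setminus z:\emptyset]$ and $(-1)^s [S:z]$ appearing in Corollary \ref{cor:criterion}. Contributions (a) and (b) combine into an unshuffle sign which, via Lemma \ref{lem:interpretation}, is identified with the relevant incidence-number product, and a final global parity count, together with the factor $[S:\emptyset]$ multiplying the whole identity of Corollary \ref{cor:criterion}, reconciles the overall sign. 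Once the signs are pinned down the identity follows term by term, and Corollary \ref{cor:criterion} yields the theorem.
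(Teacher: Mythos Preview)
Your proposal is correct and follows essentially the same route as the paper's own proof: invoke Corollary~\ref{cor:criterion}, verify the $s=2$ case directly, and for $s \geq 3$ telescope $d$ through the word $W(\sigma)$ via $dG+Gd=\alpha\beta-\id$, cancel the $\id$-contributions by pairing $\sigma$ with $\sigma\circ(j,j{+}1)$, and match the surviving $\alpha\beta$-contributions at each internal position with the three families of terms in Corollary~\ref{cor:criterion} using the unshuffle decomposition of $\sgn(\sigma)$ and Lemma~\ref{lem:interpretation}. The paper does exactly this, only with the organisational choice of treating $s=3$ separately (where the middle term $H_{S\setminus T}\circ H_T$ is vacuous) before the general case $s\geq 4$, and with the sign verifications written out in full rather than sketched; one small discrepancy is that the paper takes $T$ to be the \emph{tail} set $\{\sigma(z_{\ell+1}),\ldots,\sigma(z_s)\}$ so that the split reads $H_{S\setminus T}\circ H_T$ directly, whereas your convention $T=\{\sigma(z_1),\ldots,\sigma(z_j)\}$ produces $H_T\circ H_{S\setminus T}$---harmless, since the sum over $T$ in Corollary~\ref{cor:criterion} is over all admissible subsets, but it means your sign check must track this relabelling.
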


\begin{definition}
  \label{def:derived_cube}
  The special $N$\nbd-cube defined in the previous Theorem is denoted
  $\derived(C;\, \alpha,\, \beta,\, G;\, h_1,\, h_2,\, \cdots,\,
  h_n)$, and is called the special $N$\nbd-cube {\it derived\/} from
  the trivial $N$\nbd-cube $\triv(D;\, h_1,\, h_2,\, \cdots,\, h_n)$.
\end{definition}

\begin{proof}[Proof of Theorem~\ref{thm:main_cube}]
  We verify that the hypotheses of Corollary~\ref{cor:criterion} are
  satisfied. First note that since $G$ has degree~$-1$, the maps~$H_S$
  have degree $1-s$ as required ($s \geq 2$). For $S = \{k < \ell\}$
  we have $s=2$ and, since $\alpha$, $\beta$ and all the~$h_j$ are
  cochain maps,
  \begin{align*}
    d \circ H_S + H_S \circ d & = d \circ \beta ( h_k G h_\ell -
    h_\ell G h_k)\alpha + \beta ( h_k G h_\ell - h_\ell G h_k)\alpha
    \circ d \\
    &= \beta ( h_k d G h_\ell - h_\ell d G h_k)\alpha + \beta ( h_k G
    d h_\ell - h_\ell G d h_k)\alpha \\
    &= \beta h_k (dG+Gd) h_\ell\alpha - \beta h_\ell (dG+Gd) h_k
    \alpha\\
    &= \beta h_k (\alpha\beta-\id) h_\ell\alpha - \beta h_\ell
    (\alpha\beta-\id) h_k \alpha\\
    &= f_k f_\ell - f_\ell f_k
  \end{align*}
  (the last equality holds since $h_kh_\ell = h_\ell h_k$ by
  hypothesis) so that $H_{\{k < \ell\}}$ is a homotopy from $f_\ell
  \circ f_k$ to $f_k \circ f_\ell$ as required.

  \smallskip

  Let us now consider the case $s=3$, $S = \{z_1 < z_2 < z_3\}$. We
  can compute incidence numbers:
  \begin{align*}
    [S:\emptyset] & = -1; \\
    [S: S \setminus z_j] & = -(-1)^j && \text{for } j=1,2,3; \\
    [S: z_j] & = (-1)^j && \text{for } j=1,2,3. \\
  \end{align*}
  We thus need to verify that the sum
  \begin{equation}
    \label{eq:sum_trivial}
    \big( d \circ H_S - H_S \circ d \big) + \underbrace{\sum_{j=1}^3
      (-1)^j \cdot \beta h_{z_j}\alpha \circ H_{S \setminus z_j}}_X -
    \underbrace{\sum_{j=1}^3 (-1)^j \cdot H_{S \setminus z_j} \circ
      \beta h_{z_j}\alpha}_Y
  \end{equation}
  \goodbreak
  is trivial. Now $H_S = \beta \circ \sum_{\sigma \in \Sigma_3}
  \sgn(\sigma) h_{\sigma(1)} G h_{\sigma(2)} G h_{\sigma(3)} \circ
  \alpha$ so that\footnote{here and in the sequel we take the liberty
    to write $\sigma(i)$ in place of~$\sigma(z_i)$ and $h_i$ instead
    of~$h_{z_i}$, to simplify notation}
  \begin{multline}
    \label{eq:calc_dH-Hd}
    d \circ H_S - H_S \circ d \\
    = \beta \circ \sum_{\sigma \in \Sigma_3} \sgn(\sigma) \big(
    h_{\sigma(1)} \gmcb{d} G h_{\sigma(2)} G h_{\sigma(3)} -
    h_{\sigma(1)} G h_{\sigma(2)} G \gmcb{d}
    h_{\sigma(3)} \big) \circ \alpha \\
    = \beta \circ \sum_{\sigma \in \Sigma_3} \sgn(\sigma) h_{\sigma(1)}
    \big( \underbrace{(dG+Gd) h_{\sigma(2)} G}_A - \underbrace{G
      h_{\sigma_2} (dG+Gd)}_B \big) h_{\sigma(3)} \circ \alpha \ .
  \end{multline}
  Since $dG+Gd = \alpha\beta - \id$ we get a contribution of
  \begin{equation}
    \label{eq:1}
    \sgn(\sigma) \beta h_{\sigma(1)} (\alpha\beta-\id) h_{\sigma(2)} G
    h_{\sigma(3)} \alpha
  \end{equation}
  from the term called~$A$ in~\eqref{eq:calc_dH-Hd} above. Now note
  that there are precisely two permutation with $\sigma(3) = j$ a
  fixed value, with opposite signum; since $h_{\sigma(1)}
  h_{\sigma(2)} = h_{\sigma(2)} h_{\sigma(1)}$ this means that the
  factors coming from~``$\id$'' in~\eqref{eq:1} cancel pairwise when
  summing up over all permutations~$\sigma$. The remaining $3!=6$
  terms are
  \begin{align*}
    &   \beta(h_2 \alpha\beta h_3 G h_1 - h_3 \alpha\beta h_2 G h_1) \alpha && \text{(for \(\sigma(3)=1\))}\\
    - & \beta(h_1 \alpha\beta h_3 G h_2 - h_3 \alpha\beta h_1 G h_2) \alpha && \text{(for \(\sigma(3)=2\))}\\
    + & \beta(h_1 \alpha\beta h_2 G h_3 - h_2 \alpha\beta h_1 G h_3) \alpha && \text{(for \(\sigma(3)=3\))}
  \end{align*}
  while the expanded sum of the expression~$X$
  in~\eqref{eq:sum_trivial} reads
  \begin{align*}
    - & \beta h_1 \alpha\beta (h_2 G h_3 - h_3 G h_2) \alpha    && \text{(\(j=1\))}\\
    + & \beta h_2 \alpha\beta (h_1 G h_3 - h_3 G h_1) \alpha    && \text{(\(j=2\))}\\
    - & \beta h_3 \alpha\beta (h_1 G h_2 - h_2 G h_1) \alpha\ . && \text{(\(j=3\))}
  \end{align*}
  That is, after summing up over all permutations~$\sigma$ the terms
  coming from~$A$ and from~$X$ cancel each other.

  Similarly, using $dG+Gd = \alpha\beta - \id$ and summing up over all~$\sigma$
  gives a non-vanishing contribution of
  \begin{align*}
    &   \beta(h_2 G h_3 \alpha\beta h_1 - h_3 G h_2 \alpha\beta h_1)\alpha && \text{(for \(\sigma(3)=1\))}\\
    - & \beta(h_1 G h_3 \alpha\beta h_2 - h_3 G h_1 \alpha\beta h_2)\alpha && \text{(for \(\sigma(3)=2\))}\\
    + & \beta(h_1 G h_2 \alpha\beta h_3 - h_2 G h_1 \alpha\beta h_3)\alpha && \text{(for \(\sigma(3)=3\))}
  \end{align*}
  from~$B$ in~\eqref{eq:calc_dH-Hd}, and expanding~$Y$
  from~\eqref{eq:sum_trivial} yields
  \begin{align*}
    - & \beta(h_2Gh_3 - h_3Gh_2) \alpha\beta h_1 \alpha\\
    + & \beta(h_1Gh_3 - h_3Gh_1) \alpha\beta h_2 \alpha\\
    - & \beta(h_1Gh_2 - h_2Gh_1) \alpha\beta h_3 \alpha
  \end{align*}
  cancelling the previous non-trivial contributions from~$B$. This
  finishes the case $s=3$.

  \smallskip

  We now turn our attention to the case $s \geq 4$, $S = \{z_1 < z_2 <
  \ldots < z_s\}$. We need to check that the sum
  \begin{subequations}
    \label{eq:3}
    \begin{align}
      & (-1)^s [S:\emptyset] \cdot d \circ H_S + [S:\emptyset] \cdot
      H_S \circ d \label{eq:3a} \\
      & \qquad + \sum_{z \in S} [S:S \setminus z][S \setminus z:\emptyset] \cdot
      f_z \circ H_{S \setminus z} \label{eq:3b} \\
      & \qquad\qquad + (-1)^s \sum_{z \in S} [S:z] \cdot H_{S
        \setminus z} \circ f_z \label{eq:3c} \\
      & \qquad \qquad \qquad + \sum_{\substack {\udot T \subseteq S \\ t \geq 2 \leq s-t}}
      (-1)^{ts} [S:T][T:\emptyset] \cdot H_{S \setminus T} \circ H_T \label{eq:3d}
    \end{align}
  \end{subequations}
  (where $t = \# T$ in the last sum) is trivial. Using the definition
  of~$H_S$ in~\eqref{eq:3a} and introducing pairwise cancelling terms
  of the type\begin{multline*} (-1)^{\ell+1} \beta \big( h_{\sigma(1)}
    G h_{\sigma(2)} \ldots G \gmcb{d} h_{\sigma(\ell)} G
    h_{\sigma(\ell+1)} \ldots G h_{\sigma(n)} \\
    - h_{\sigma(1)} G h_{\sigma(2)} \ldots G h_{\sigma(\ell)}
    \gmcb{d} G h_{\sigma(\ell+1)} \ldots G h_{\sigma(n)} \big) \alpha
  \end{multline*}
  (where we write $h_i$ instead of~$h_{z_i}$ as before) we see that up
  to the factor $(-1)^s [S:\emptyset]$, \eqref{eq:3a} is the sum over
  all $\sigma \in \Sigma_s$ of
  \begin{equation}
    \label{eq:4}
    \left.\begin{aligned}
        \sgn(\sigma) \cdot \beta \Big(& h_{\sigma(1)} \gmcb{(dG+Gd)}
        h_{\sigma(2)} G h_{\sigma(3)} G \ldots G h_{\sigma(s)} \\
        -& h_{\sigma(1)} G h_{\sigma(2)} \gmcb{(dG+Gd)} h_{\sigma(3)} G \ldots G
        h_{\sigma(s)} \\
        +& h_{\sigma(1)} G h_{\sigma(2)} G h_{\sigma(3)} \gmcb{(dG+Gd)} \ldots G
        h_{\sigma(s)} \\
        & \qquad\qquad \qquad \qquad \qquad \quad \ddots \\
        + (-1)^s & h_{\sigma(1)} G h_{\sigma(2)} G h_{\sigma(3)} G \ldots
        h_{\sigma(s-1)} \gmcb{(dG+Gd)} h_{\sigma(s)} \Big) \alpha \ .
      \end{aligned}\quad\right\}
  \end{equation}
  Now $dG+Gd = \alpha\beta - \id$; since the maps~$h_j$ commute, all
  contributions from ``$\id$'' will cancel each other upon summing up
  over~$\sigma$. More precisely, by multiplying out the $\ell$th
  summand of~\eqref{eq:4} becomes, up to the sign $(-1)^{\ell+1} \cdot
  \sgn(\sigma)$,
  \begin{gather}
    \beta h_{\sigma(1)} G h_{\sigma(2)}G \ldots \gmcb{h_{\sigma(\ell)}
      \alpha\beta h_{\sigma(\ell+1)}} \ldots G h_{\sigma(s)}\alpha
    \qquad\qquad\qquad
    \tag{\ref{eq:4}$_{\alpha\beta}$}\\
    \qquad \qquad - \beta h_{\sigma(1)} G h_{\sigma(2)}G \ldots
    \gmcb{h_{\sigma(\ell)} h_{\sigma(\ell+1)}} \ldots G h_{\sigma(s)}
    \alpha \ ,\tag{\ref{eq:4}$_\id$} \label{eq:4_id}
  \end{gather}
  and the summand~\eqref{eq:4_id} cancels out with the corresponding
  one coming from the permutation $\sigma \circ (\ell, \ell+1)$.
  
  So we are left with the following non-trivial contribution
  from~\eqref{eq:4}, for a fixed permutation~$\sigma$:
  \begin{align}
    (-1)^s [S:\emptyset] \cdot \sgn(\sigma) \beta \Big(\, &
    h_{\sigma(1)} \gmcb{\alpha\beta} h_{\sigma(2)} G h_{\sigma(3)}
    G \ldots h_{\sigma(s-1)}G h_{\sigma(s)} \tag{\ref{eq:4}$_1$} \label{m1}\\
    - & h_{\sigma(1)} G h_{\sigma(2)} \gmcb{\alpha\beta} h_{\sigma(3)} G \ldots
    h_{\sigma(s-1)} G h_{\sigma(s)} \tag{\ref{eq:4}$_2$} \label{m2}\\
    + & h_{\sigma(1)} G h_{\sigma(2)} G h_{\sigma(3)} \gmcb{\alpha\beta} \ldots
    h_{\sigma(s-1)} G h_{\sigma(s)} \tag{\ref{eq:4}$_3$} \label{m3}\\
    & \hphantom{h_{\sigma(1)} G h_{\sigma(2)} G h_{\sigma(3)}
      \gmcb{\alpha\beta}} \ddots \tag*{$\vdots$\quad}\\
    +(-1)^s& h_{\sigma(1)} G h_{\sigma(2)} G h_{\sigma(3)} G \ldots
    h_{\sigma(s-1)} \gmcb{\alpha\beta} h_{\sigma(s)} \,\Big) \alpha
    \tag{\ref{eq:4}$_{s-1}$} \label{ms-1}
  \end{align}
  We will proceed by expanding the summands~(\ref{eq:3}b--d) and show
  that there is a bijective, sign-reversing correspondence of the
  resulting terms with the
  terms~(\ref{eq:4}$_{1}$--\ref{eq:4}$_{s-1}$) just calculated.

  \smallskip

  Let us begin with~\eqref{eq:3b}. The maps $H_{S \setminus z}$ are
  given as sums of $(s-1)!$ terms indexed by the permutations of~$S
  \setminus z$, while $z$~itself varies over over the set~$S$;
  expanding results in $s \cdot (s-1)! = s!$ terms. More explicitly,
  writing $S = \{z_1 < z_2 < \ldots < z_s\}$ these $s \cdot (s-1)!$
  summands are displayed as
  \begin{multline*}
    \sum_{k=1}^s \sum_{\tau \in \Sigma(S \setminus z_k)} [S:S
    \setminus z_k][S \setminus z_k:\emptyset] \cdot \sgn(\tau) \\
    \cdot \beta h_k \gmcb{\alpha\beta} h_{\tau(1)} G h_{\tau(2)} G \ldots G
    h_{\tau(k-1)} G h_{\tau(k+1)} G \ldots G h_{\tau(s)} \alpha\ ,
  \end{multline*}
  writing $h_i$ for~$h_{z_i}$ and $\tau(i)$ for~$\tau(z_i)$ as before.
  Individually they will correspond, in a bijective manner, to the
  $s!$~different summands~\eqref{m1}, which are indexed by the
  permutation~$\sigma$.

  In detail, given a permutation $\sigma \in \Sigma(S)$ let $k$~be the
  index determined by $z_k = \sigma(z_1)$. Let $\tau \in \Sigma(S
  \setminus z_k)$ be defined by
  \begin{equation}
    \tau(z_i) =
    \begin{cases}
      \sigma(z_{i+1}) & \text{for } i < k \ , \\
      \sigma(z_i) & \text{for } i > k \ .
    \end{cases}\label{eq:seenbefore_1}
  \end{equation}
  Then by construction of~$\tau$ we have an equality of $s$\nbd-tuples
  \begin{multline*}
    \big(\sigma(z_1),\, \sigma(z_2),\, \cdots,\, \sigma(z_s) \big) \\
    = \big( z_k,\, \tau(z_1),\, \tau(z_2),\, \cdots, \tau(z_{k-1}),\,
    \tau(z_{k+1}),\, \cdots,\, \tau(z_s) \big)
  \end{multline*}
  so that the summand of~\eqref{eq:3b} indexed by $z=z_k$ and $\tau
  \in S \setminus z$ agrees with~\eqref{m1} up to sign, and it remains
  to show that the signs are different.

  Now the sign occurring in~\eqref{eq:3b} is $[S:S \setminus z_k] [S
  \setminus z_k : \emptyset] \cdot \sgn(\tau)$. Let us relate
  $\sgn(\tau)$ to~$\sgn(\sigma)$ now. The set of
  inversions\footnote{an inversion of a permutation~$\pi$ of a totally
    ordered finite set is a pair of elements~$(x,y)$ of its domain
    with $x<y$ and $\pi(x)>\pi(y)$} of~$\tau$ is
  \[\mathrm{Inv}(\tau) = \{ (z_i,\, z_j) \,|\, i,j \neq k;\ i<j;\
  \tau(z_i) > \tau(z_j)\} \ .\] Similarly, the set of inversions
  of~$\sigma$ is given by
  \[\mathrm{Inv}(\sigma) = \{ (z_i,\, z_j) \,|\, i<j;\
  \sigma(z_i) > \sigma(z_j)\} \ ,\] and the former injects into the
  latter by a map induced from the assignment $t \mapsto t+1$ if
  $t<k$, and $t \mapsto t$ if $t > k$. The inversions of~$\sigma$ not
  in the image of this injection are precisely the inversions of the
  type~$(z_1,z_j)$, which are parametrised by those $j>1$ satisfying
  $\sigma(z_j) < \sigma(z_1) = z_k$. But as $j$~varies over all
  integers between~$2$ and~$s$, the image $\sigma(z_j)$ varies over
  all of~$S \setminus z_k$ so that the number of such inversions is
  the number of elements of~$S$ strictly less than~$z_k$, of which
  there are~$k-1$. In total, $\# \mathrm{Inv}(\sigma) = \#
  \mathrm{Inv}(\tau) + k-1$. We thus have
  \begin{equation}
    \begin{aligned}
      \sgn(\sigma) &= (-1)^{\mathrm{Inv}(\sigma)} \\
      &= (-1)^{\mathrm{Inv}(\tau)} \cdot (-1)^{k-1} \\
      &= \sgn(\tau) \cdot [S:S \setminus z_k] \ ,
    \end{aligned}\label{eq:seenbefore_2}
  \end{equation}
  the last equality courtesy of Lemma~\ref{lem:interpretation}. Thus
  the total sign occurring in~\eqref{eq:3b} is
  \begin{multline*}
    [S:S \setminus z_k] [S \setminus z_k : \emptyset] \cdot \sgn(\tau) 
    = [S \setminus z_k : \emptyset] \cdot \sgn(\sigma) \\ =
    (-1)^{(s-1)(s-2)/2} \cdot \sgn(\sigma)
  \end{multline*}
  while the sign occurring in~\eqref{m1} is
  \begin{equation*}
    (-1)^s [S:\emptyset] \cdot \sgn(\sigma) = (-1)^s \cdot
    (-1)^{s(s-1)/2} \cdot \sgn(\sigma) = (-1)^{s(s+1)/2} \cdot
    \sgn(\sigma) \ .
  \end{equation*}
  It remains to observe that
  \[\frac{(s-1)(s-2)}2 = \frac{s^2}2 - \frac{3s}2 + 1 \quad \text{and}
  \quad \frac{s^2}2 + \frac s2 = \frac{s(s+1)}2\] differ
  by~$2s-1$ and thus have different parity. This implies that
  \[[S:S \setminus z_k] [S \setminus z_k : \emptyset] \cdot \sgn(\tau)
  = - \big( (-1)^s [S:\emptyset] \cdot \sgn(\sigma) \big)\] as
  required.

  \smallskip

  Not surprisingly, a similar argument works for~\eqref{eq:3c} which is
  linked to the term~\eqref{ms-1}.   Write $S = \{z_1 < z_2 < \ldots <
  z_s\}$ as usual; expanding~\eqref{eq:3c} results in the $s \cdot
  (s-1)!$\nbd-term sum
  \begin{multline*}
    \sum_{k=1}^s \sum_{\tau \in \Sigma(S \setminus z_k)} (-1)^s \cdot
    [S:z_k] \cdot \sgn(\tau) \\
    \cdot \beta h_{\tau(1)} G h_{\tau(2)} G \ldots G h_{\tau(k-1)} G
    h_{\tau(k+1)} G \ldots G h_{\tau(s)} \gmcb{\alpha\beta} h_k \alpha\ ,
  \end{multline*}
  again writing $h_i$ for~$h_{z_i}$ and $\tau(i)$ for~$\tau(z_i)$.
  The summand indexed by~$k$ and $\tau \in \Sigma(Z \setminus z_k)$ is
  uniquely determined by $\sigma \in \Sigma(S)$ as follows: $k$~is the
  index satisfying $z_k = \sigma(z_s)$, and $\tau$~is given by
  \[\tau(z_i) =
  \begin{cases}
    \sigma(z_i)    & \text{for } i < k \ , \\
    \sigma(z_{i-1}) & \text{for } i > k \ .
  \end{cases}\]
  Then by construction of~$\tau$ we have an equality of $s$\nbd-tuples
  \begin{multline*}
    \big(\sigma(z_1),\, \sigma(z_2),\, \cdots,\, \sigma(z_s) \big) \\
    = \big( \tau(z_1),\, \tau(z_2),\, \cdots, \tau(z_{k-1}),\,
    \tau(z_{k+1}),\, \cdots,\, \tau(z_s),\, z_k \big)
  \end{multline*}
  so that the summand of~\eqref{eq:3c} indexed by $z=z_k$ and $\tau
  \in S \setminus z$ agrees with~\eqref{ms-1} up to sign, and it remains
  to show that the signs are different.

  There is an injective map of sets of inversions
  \[\mathrm{Inv}(\tau) \rTo \mathrm{Inv}(\sigma)\] induced by the
  assignment $t \mapsto t$ if $t<k$, and $t \mapsto t-1$ if $t>k$. The
  inversions of~$\sigma$ not in the image are of the type $(i,s)$
  where $\sigma(z_i)> \sigma(z_s) = z_k$. But as $i$ varies over all
  numbers from~$1$ to~$s-1$, the element $\sigma(z_i)$ varies over all
  of~$S \setminus z_k$. That is, the number of inversions of~$\sigma$
  which are not in the image is the number of elements of~$S$ which
  are strictly greater than~$z_k$, of which there are $s-k$ many. In
  other words,
  \[\sgn(\sigma) = \sgn(\tau) \cdot (-1)^{s-k} \ .\] So the sign
  occurring in~\eqref{ms-1} is
  \begin{align*}
    (-1)^s [S:\emptyset] (-1)^s \cdot \sgn(\sigma) &= (-1)^{s(s-1)/2}
    \cdot \sgn(\sigma) \\
    &= (-1)^{s(s-1)/2} \cdot (-1)^{s-k} \cdot \sgn(\tau) \\
    &= (-1)^{s(s+1)/2-k} \cdot \sgn(\tau) \ .
  \end{align*}
  
  On the other hand, by Lemma~\ref{lem:interpretation} we know that
  $[S:z_k] = (-1)^\kappa$, where~$\kappa$ is the number of pairs
  $(z_i,z_j) \in S \times S$ with $i<j$ and $j \neq k$; that is,
  $\kappa$~is the number of unordered pairs of elements of~$S$,
  reduced by the number of elements in~$S$ strictly less than~$z_k$:
  \[\kappa = \frac12s(s-1) - (k-1)\] This means that the sign
  occurring in~\eqref{eq:3c} is
  \begin{align*}
    (-1)^s \cdot [S:z_k] \cdot \sgn(\tau) &= (-1)^s \cdot
    (-1)^{s(s-1)/2-k+1} \cdot \sgn(\tau) \\
    &= - (-1)^{s(s+1)/2-k} \cdot \sgn(\tau) \ ,
  \end{align*}
  which is opposite to the above as required.

  \smallskip

  It remains to deal with~(\ref{eq:4}$_\ell$), for $1 < \ell < s-1$,
  and relate these to the summands of~\eqref{eq:3d}. For reference, we
  record that the latter expands to
  \begin{equation}
    \left.
      \begin{gathered}
        \sum_{\substack {\udot T \subseteq S \\ t \geq 2 \leq s-t}}
        \sum_{\tau \in \Sigma(S \setminus T)} \sum_{\mu \in \Sigma(T)}
        (-1)^{ts} [S:T][T:\emptyset] \cdot \sgn(\tau) \cdot \sgn(\mu)
        \qquad \\
        \qquad \cdot \beta h_{\tau(1)} G h_{\tau(2)} G \ldots G h_{\tau(\ell)}
        \gmcb{\alpha\beta} h_{\mu(1)} G h_{\mu(2)} G \ldots G
        h_{\mu(s-\ell)} \alpha \ ,
      \end{gathered}
    \quad\right\}
    \label{eq:3d_expanded}
  \end{equation}
  where we have written $\ell = s-t$, $S\setminus T = \{x_1 < x_2 <
  \ldots < x_\ell\}$ and $T = \{y_1 < y_2 < \ldots < y_{s-\ell}\}$,
  and also used the abbreviations $\tau(i) = \tau(x_i)$ and $\mu(j) =
  \mu(y_j)$.
  
  \smallskip

  For $S = \{z_1 < \ldots < z_s\}$ as usual, define $T = \{
  \sigma(\ell+1),\, \sigma(\ell+2),\, \cdots,\, \sigma(s) \}$ so that
  $t = \# T = s-\ell$. Rename the elements of $S \setminus T = \{
  \sigma(1),\, \cdots,\, \sigma(\ell)\}$ as $\{x_1 < x_2 < \ldots <
  x_\ell\}$, and let $\tau \in \Sigma(S \setminus T)$ be determined by
  $\tau(x_j) = \sigma(z_j)$ for $1 \leq j \leq \ell$. That is,
  $\tau$~is the unique permutation of~$S \setminus T$ satisfying
  \begin{equation}
    \label{eq:tau}
    \tau\inv \sigma(z_1) < \tau\inv \sigma(z_2) < \ldots < \tau\inv
    \sigma(z_\ell) \ .
  \end{equation}
  Similarly, let $\mu \in \Sigma(T)$ be the unique permutation with
  \begin{equation}
    \label{eq:mu}
    \mu\inv \sigma(z_{\ell +1}) < \mu\inv \sigma(z_{\ell+2}) < \ldots
    < \mu\inv \sigma(z_s) \ .
  \end{equation}
  By construction the summand of~\eqref{eq:3d} indexed by $T$, $\tau$
  and~$\mu$ agrees with~(\ref{eq:4}$_\ell$) up to sign, and this
  correspondence is uniquely reversible.

  It remains to check that the signs are different. To this end, let
  $\nu = (\tau\inv,\, \mu\inv) \circ \sigma \in \Sigma(S)$ where we
  use the canonical embedding
  \[(\tau\inv,\, \mu\inv) \in \Sigma(S \setminus T) \times \Sigma(T)
  \subseteq \Sigma(S)\] coming from the decomposition $S = (S
  \setminus T) \amalg T$. Then on the one hand,
  \begin{equation}
    \label{eq:nu}
    \sgn(\nu) = \sgn (\tau) \cdot \sgn(\mu) \cdot \sgn(\sigma) \ .
  \end{equation}
  On the other hand, in view of~\eqref{eq:tau} and~\eqref{eq:mu} we
  know that the inversions of~$\nu$ are parametrised by the pairs
  $(i,j)$ with $i \leq \ell < j$ and $\nu(z_i) > \nu(z_j)$. Now the
  elements of the form $\nu(i)$, for $1 \leq i \leq \ell$, are
  precisely the elements of the form $\sigma(i)$, for $i$~in the same
  range, and are thus precisely the elements of~$S \setminus T$;
  similarly, the elements of the form $\nu(j)$, for $\ell < i \leq s$
  are precisely the elements of~$T$. In other words, the
  number~$\lambda$ of inversions of~$\nu$ is the number of pairs
  $(y,x) \in T \times (S \setminus T)$ with $y < x$.

  Let $\omega$~be the number of pairs $(y,x) \in (S \setminus T)
  \times (S \setminus T)$ with $y < x$. Then
  \[\omega = \frac12 \ell (\ell-1) \ ,\] by direct counting. Clearly
  $\kappa = \lambda + \omega$ is the number of pairs $(y,x) \in S
  \times (S \setminus T)$ with $y<x$. But $(-1)^\kappa = [S:T]$, by
  Lemma~\ref{lem:interpretation}; combined with~\eqref{eq:nu} this
  yields
  \begin{multline*}
    [S:T] = (-1)^\omega \cdot (-1)^\lambda = (-1)^{\ell(\ell-1)/2}
    \cdot \sgn(\nu) \\ = (-1)^{\ell(\ell-1)/2} \cdot \sgn (\tau) \cdot
    \sgn(\mu) \cdot \sgn(\sigma) \ .
  \end{multline*}

  So the sign occurring in~\eqref{eq:3d}, or rather in its expanded
  form~\eqref{eq:3d_expanded}, is given by
  \begin{multline*}
    (-1)^{ts} [S:T][T:\emptyset] \cdot \sgn(\tau) \cdot \sgn(\mu) \\ =
    (-1)^{(s-\ell)s} \cdot (-1)^{\ell (\ell-1)/2}[T:\emptyset] \cdot
    \sgn(\sigma) \qquad \qquad \qquad \qquad\\
    = (-1)^{(s-\ell)s} \cdot (-1)^{\ell (\ell-1)/2}[T:\emptyset] \cdot
    (-1)^{(s-\ell)(s-\ell-1)/2} \cdot \sgn(\sigma) \ ,
  \end{multline*}
  and a calculation we omit shows that this agrees with
  \[(-1)^{\ell + s(s+1)/2} \cdot \sgn(\sigma)\ .\] The sign occurring
  in~(\ref{eq:4}$_\ell$) is the opposite as
  \begin{multline*}
    (-1)^s [S:\emptyset] \cdot \sgn(\sigma) \cdot (-1)^{\ell+1} =
    (-1)^s \cdot (-1)^{s(s-1)/2} \cdot (-1)^{\ell+1} \cdot
    \sgn(\sigma) \\ = - (-1)^{\ell + s(s+1)/2} \cdot \sgn(\sigma) \ ,
  \end{multline*}
  just as required. This finishes the proof of
  Theorem~\ref{thm:main_cube}.
\end{proof}

\part{Mapping tori and the Mather trick}
\label{part:tori}

\section{Filtration}
\label{sec:filtration}

We now record that the totalisation construction introduced in
Definition~\ref{def:totalisation} comes with a natural filtration by
cardinality of the index set. That is, given a homotopy commutative
$N$\nbd-cube~$F$, cf.~Definition~\ref{def:homotopy_commutative},
define for fixed $k \in \bZ$ the modules
\[\tot_k(F)^n:= \bigoplus_{\substack{A \subseteq N \\ \#A \geq k}}
F(A)^{n-a}\] together with module homomorphisms
\[D(F)_k = D_k = (D_{k,B,A})_{A,B \subseteq N} \colon \tot_k(F)^n \rTo
\tot_k(F)^{n+1}\] given by \[D_{k,B,A} = \begin{cases} (-1)^{ab} [B:A]
  \cdot H_{B,A} & \text{if } A \subseteq B \text{ and } \#A \geq k \ ,
  \\ 0 & \text{otherwise.} \end{cases}\]

\begin{proposition}
  \label{prop:filtration}
  The graded module $\tot_k(F)$ together with~$D(F)_k$ is a cochain
  complex. We have a descending filtration of subcomplexes
  \[\tot(F) = \tot_0 (F) \supseteq \tot_1(F) \supseteq \ldots
  \supseteq \tot_n(F) \supseteq \tot_{n+1}(F) = 0 \ .\] The filtration
  quotients are
  \[\tot_k(F) / \tot_{k+1}(F) = \bigoplus_{\substack{A \subseteq N \\
      \#A = k}} \Sigma^k F(A) \ ,\] that is, are the direct sums of
  the cochain complexes $F(A)$ with $\# A = k$,
  suspended\/\footnote{The $k$th suspension of cochain complexes
    modifies the differential by a factor of~$(-1)^k$.} $k$~times.\qed
\end{proposition}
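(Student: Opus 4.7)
The proof should be essentially a direct verification exploiting the fact that $D_{B,A}$ vanishes unless $A \subseteq B$. The plan has four small steps.

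First, I would observe that for any $A \subseteq B$ we have $\#B \geq \#A$; since $D_{B,A} = 0$ unless $A \subseteq B$, the map $D = D(F)$ strictly preserves the condition ``$\#A \geq k$'' on the column index. In particular, $\tot_k(F)$ sits inside $\tot(F)$ as a graded submodule which is carried to itself by~$D$, and the map $D_k$ is literally the restriction $D|_{\tot_k(F)}$. Since $D \circ D = 0$ by Definition~\ref{def:homotopy_commutative}, we conclude $D_k \circ D_k = 0$, establishing that $(\tot_k(F), D_k)$ is a cochain complex.

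Second, the filtration property is immediate from the same observation: the condition ``$\#A \geq k+1$'' implies ``$\#A \geq k$'', so $\tot_{k+1}(F) \subseteq \tot_k(F)$ as graded modules, and both carry the restriction of the same differential~$D$, so the inclusion is a cochain map. The extreme cases $\tot_0(F) = \tot(F)$ and $\tot_{n+1}(F) = 0$ are clear from the definition.

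Third, I would compute the quotient $\tot_k(F)/\tot_{k+1}(F)$. In degree $n$ the quotient is, as a graded module,
\begin{equation*}
  \bigoplus_{\substack{A \subseteq N \\ \#A = k}} F(A)^{n-k} \ .
\end{equation*}
The induced differential is obtained from~$D_k$ by discarding those matrix entries $D_{B,A}$ which land in $\tot_{k+1}(F)$; since $\#A = k$ and $A \subseteq B$, any $B \neq A$ satisfies $\#B \geq k+1$, so only the diagonal entries $D_{A,A}$ survive. By the definition of $D$ in Definition~\ref{def:totalisation} and the convention $[A:A]=1$, this diagonal entry equals $(-1)^{a^2}\, d_A = (-1)^a d_A = (-1)^k d_A$.

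Finally, this is exactly the differential of $\Sigma^k F(A)$ under the convention (recalled in the footnote of the proposition) that the $k$th suspension multiplies the differential by $(-1)^k$ and shifts degrees by~$k$. Thus the induced cochain complex on the quotient is precisely $\bigoplus_{\#A=k} \Sigma^k F(A)$, as claimed. There is no real obstacle; the only thing to keep track of is the sign $(-1)^{a^2} = (-1)^a$ on the diagonal, which matches the suspension convention exactly because $a = k$ on this filtration quotient.
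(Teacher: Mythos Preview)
Your verification is correct and is precisely the routine check the paper has in mind; the paper omits the proof entirely (the statement is marked \qed\ with no argument given), and your unpacking of the diagonal entry $D_{A,A} = (-1)^{a^{2}} d_{A} = (-1)^{k} d_{A}$ is exactly the point that justifies the suspension convention.
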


\section{The Mather trick for derived special $N$-cubes}
\label{sec:mapping_torus_analogue}

Let $h_1,\, \cdots,\, h_n$ be pairwise commuting self-maps of the
chain complex~$D$. As mentioned at the beginning
of~\S\ref{sec:explicit}, we can form the trivial special $N$\nbd-cube
$\triv(D;\, h_1,\, h_2,\, \cdots,\, h_n)$ (with trivial homo\-topies
and higher homotopies); for convenience we denote its totalisation
by~$X$ and its differential by $D^X = (D^X_{B,A})_{A,B \subseteq N}$.

Let $g \colon D \rTo D$ be a cochain map, and let $G \colon \id \simeq
g$ be a homotopy so that $dG+Gd= g-\id$. We can now form the derived
special $N$\nbd-cube $\derived(D;\, g,\, \id_D,\, G;\, h_1,\, h_2,\,
\cdots,\, h_n)$ according to Theorem~\ref{thm:main_cube} and
Definition~\ref{def:derived_cube}, with $\beta = \id_D$ and $\alpha =
g$. We retain the notation of~\ref{thm:main_cube}: we have chain maps
$f_k = h_kg$ and higher homotopies~$H_S$ (for $s \geq 2$), and maps
$H_{B,A} = H_{B \setminus A}$.  We denote the totalisation by~$Y$, and
write $D^Y = (D^Y_{B,A})_{A,B \subseteq N}$ for its differential.

Finally, we define a matrix $M = (M_{B,A})_{A,B \subseteq N}$ which,
{\it a priori\/}, is just a collection of module homomorphisms $Y^n
\rTo X^n$. We set $M_{B,A} = 0$ if $A \not\subseteq B$. Otherwise, we
set
\begin{equation}
  \label{eq:def_M_BA}
  M_{B,A} = (-1)^b(-1)^{ab} [B:A] \cdot M_{B \setminus A} \ ,
\end{equation}
where $a = \# A$ and $b = \# B$ as usual, and
\begin{equation}
  \label{eq:def_M_S}
  \left.
    \begin{aligned}
      M_\emptyset &= g \ , \\
      M_{\{k\}} &= G \circ f_k = G \circ h_k \circ g && \text{for \(k \in
        S\)}, \\
      M_S &= G \circ H_S && \text{for \(s \geq 2\)}.
    \end{aligned}
    \quad\right\}
\end{equation}
Writing $S = \{z_1 < z_2 < \ldots < z_s\}$ and
abbreviating $h_{z_i}$ by~$h_i$ as before 
we have more explicitly, for $s \geq 2$,
\[M_S = G \circ \sum_{\sigma \in \Sigma_s} \sgn(\sigma) \cdot
h_{\sigma(1)} G h_{\sigma(2)} G \ldots G h_{\sigma(s)} \circ g \ .\]

\begin{lemma}
  \label{lem:compare_triv_analogue}
  The matrix $M$ defines a cochain map
  \begin{multline*} 
    Y = \tot \derived(D;\, g,\, \id_D,\, G;\, h_1,\,  h_2,\, \cdots,\,
    h_n) \\
    \rTo^M \tot \triv(D;\, h_1,\, h_2,\, \cdots,\, h_n) = X \ .
  \end{multline*}
  This map is a quasi-isomorphism. If $D$ is a bounded above complex
  of projective $R$\nbd-modules, then $M$ is a homotopy equivalence $M
  \colon Y \simeq X$.
\end{lemma}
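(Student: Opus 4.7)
There are three things to establish: the cochain-map identity $D^X \circ M = M \circ D^Y$, the quasi-isomorphism statement, and the upgrade to a homotopy equivalence.

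For the cochain-map property, the natural approach is to expand both matrix products entry by entry. Since the trivial cube carries no homotopies of order~$\geq 2$, the product $(D^X M)_{B,A}$ collapses to $d \circ M_{B,A}$ together with a sum of contributions $h_k \circ M_{B \setminus k, A}$ for $k \in B \setminus A$, weighted by the signs prescribed by Definition~\ref{def:totalisation}. By contrast $(M D^Y)_{B,A}$ is a sum $\sum_{A \subseteq S \subseteq B} M_{B,S} \circ D^Y_{S,A}$, where $D^Y$ encodes the full homotopy data of the derived cube: the differential~$d$, the cochain maps $f_k = h_k g$, and the higher homotopies $H_T$ built out of alternating products of the $h_i$ and~$G$. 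After pulling out the common factor $[B:A]$ using Lemma~\ref{lem:incidence_independent}, the desired identity reduces to a combinatorial statement about alternating words $h_{i_1} G h_{i_2} G \cdots G h_{i_s} \cdot g$ summed over permutations. The cancellations are driven by the same three ingredients that power the proof of Theorem~\ref{thm:main_cube}: the relation $dG + Gd = g - \id$, commutativity of the $h_i$ (which pairs up and kills the $\mathrm{id}$ contributions of $dG+Gd$), and the sign identities of Section~\ref{sec:incidence-numbers}. This verification is the main obstacle and accounts for the bulk of the work; structurally it mirrors the six expansions carried out in the proof of Theorem~\ref{thm:main_cube}.

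For the quasi-isomorphism claim, I would exploit the descending filtration of Proposition~\ref{prop:filtration}. The map~$M$ respects this filtration, because $M_{B,A} = 0$ unless $A \subseteq B$, and then $\#A \geq k$ forces $\#B \geq k$. The induced map on the associated graded quotient $\tot_k(F)/\tot_{k+1}(F) = \bigoplus_{\#A = k} \Sigma^k F(A)$ is block-diagonal with blocks $M_{A,A}$, since off-diagonal contributions land in $\tot_{k+1}$. A direct computation using $a^2 \equiv a \pmod{2}$ gives
\[
  M_{A,A} = (-1)^{a} (-1)^{a^2} [A:A] \cdot M_\emptyset = g \ .
\]
Since $G$ is a homotopy from $\id_D$ to~$g$, the map $g$ induces the identity on cohomology and is hence a quasi-isomorphism; the same holds for each suspension~$\Sigma^k g$. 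A downward induction on~$k$, using the short exact sequences $0 \to \tot_{k+1}(F) \to \tot_k(F) \to \tot_k(F)/\tot_{k+1}(F) \to 0$ and the five-lemma applied to their long exact cohomology sequences, yields that $M \colon Y \to X$ is a quasi-isomorphism.

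Finally, when $D$ is bounded above and consists of projective $R$-modules, each $\tot(F)^n = \bigoplus_A F(A)^{n-a}$ is a finite direct sum of projectives, hence projective, and vanishes for $n$ sufficiently large. Thus both $X$ and $Y$ are bounded above complexes of projective $R$-modules, so the quasi-isomorphism~$M$ is automatically a homotopy equivalence by the standard result recalled in the Conventions section.
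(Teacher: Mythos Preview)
Your plan is correct and matches the paper's proof: the quasi-isomorphism is obtained from the filtration of Proposition~\ref{prop:filtration} and the five-lemma exactly as you describe, and the cochain-map identity is checked entry by entry using the collapse of~$D^X$ on the trivial side. One refinement worth noting: rather than re-running the permutation expansions of Theorem~\ref{thm:main_cube}, the paper (after an intermediate combinatorial reduction showing $\sum_{z} \pm h_z G H_{(B\setminus z)\setminus A}$ collapses to~$H_{B\setminus A}$) uses Lemma~\ref{lem:incidence_independent} to reduce to $A=\emptyset$ and then observes that the residual expression is \emph{literally} the relation of Corollary~\ref{cor:criterion}, already known to vanish since the derived cube is a special $N$-cube.
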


\begin{proof}
  Granting that $M$~defines a cochain map, it is almost trivial to
  verify that $M$ respects the filtration of the totalisation, such
  that the induced map on filtration quotients
  \[Y_k / Y_{k+1} \rTo X_k / X_{k+1}\] is a direct sum of the ($k$th
  suspension of) the cochain map~$g$. This implies, by an iterative
  application of the five lemma for $k= n, n-1, \cdots, 0$, that $M
  \colon Y_k \rTo X_k$ is a quasi-isomorphism. In particular, $M$~is a
  quasi-isomorphism $Y = Y_0 \rTo X_0 = X$. The final claim about
  bounded above complexes follows from general homological algebra.

  We now check that $M$~defines a cochain map; to this end we need to
  verify that $MD^Y = D^X M$, that is, using the fact that all
  matrices under consideration are triangular,
  \[\big( MD^Y \big)_{B,A} - \big(D^X M \big)_{B,A} = 0\] for all $A
  \subseteq B \subseteq N$.

  We will analyse the second summand $\big( D^X M \big)_{B,A} =
  \sum_{B \supseteq \udot S \supseteq A} D^X_{B,S} M_{S,A}$ first, and
  assume that $b-a \geq 3$. As $X$~is the totalisation of a trivial
  $N$\nbd-cube we know that $D^X_{B,S} = 0$ unless $\#(B \setminus S)
  \leq 1$; this means that
  \begin{multline*}
    \big( D^X M \big)_{B,A} = (-1)^b d \,\cdot\, (-1)^b (-1)^{ab}
    [B:A] \cdot G \circ H_{B \setminus A} \\
    + \sum_{z \in B \setminus A} [B : B \setminus z] \cdot h_z \circ
    (-1)^{b-1} (-1)^{a(b-1)} [B \setminus z : A] \cdot G \circ H_{(B
      \setminus z) \setminus A}
  \end{multline*}
  so that, simplifying the sign terms and introducing a term of the
  form $GdH_{B \setminus A} - GdH_{B \setminus A}$, we obtain
  \begin{multline*}
    (-1)^{ab} \big( D^X M \big)_{B,A} = [B:A] \big( (dG+Gd) \circ
    H_{B \setminus A} - GdH_{B \setminus A} \big) \\
    - (-1)^{b-a} \cdot \sum_{z \in B \setminus A} [B : B \setminus z]
    [B \setminus z : A] \cdot h_z \circ G \circ H_{(B \setminus z)
      \setminus A} \ .
  \end{multline*}
  The second line of this expression reduces to $[B:A] \cdot H_{B
    \setminus A}$, as we will verify presently; as $dG+Gd = g - \id$
  this means that
  \begin{equation}
    \label{eq:DXMBA}
    \big( D^X M \big)_{B,A} = (-1)^{ab} [B:A] \cdot ( g H_{B \setminus
      A} - GdH_{B \setminus A}) \ .
  \end{equation}
  To verify the claim, let us use the explicit definition of~$H_{(B
    \setminus z) \setminus A}$; writing $B \setminus A = {z_1 < z_2 <
    \ldots < z_{b-a}}$ and abbreviating~$h_{z_k}$ by~$h_k$ as usual,
  we have
  \begin{multline*}
    - \sum_{z \in B \setminus A} [B : B \setminus z] [B \setminus z :
    A] \cdot h_z \circ G \circ H_{(B \setminus z) \setminus A}
    \\ 
    = \sum_{k=1}^{b-a} \sum_{\tau \in \Sigma(B \setminus z_k)} - [B :
    B \setminus z_k] [B \setminus z_k : A] \cdot \sgn(\tau) \qquad
    \qquad \\
    \cdot h_k G h_{\tau(1)} G h_{\tau(2)} G \ldots G h_{\tau(k-1)} G
    h_{\tau(k+1)} G \ldots G h_{\tau(b)} g\ ,
  \end{multline*}
  using $\tau(i)$ for~$\tau(z_i)$ as
  before; we need to compare that with
  \[[B:A] \cdot H_{B \setminus A} = \sum_{\sigma \in \Sigma(B
    \setminus A)} [B:A] \cdot \sgn(\sigma) \cdot h_{\sigma(1)} G
  h_{\sigma(2)} G \ldots G h_{\sigma(b)} g \ .\] As we have seen
  before, there is a bijection
  \[\Sigma(B \setminus A) \ni \sigma \mapsto (k, \tau) \text { where }
  z_k = \sigma(z_1) \in B \text{ and } \tau \in \Sigma\big( (B
  \setminus z_k) \setminus A \big) \] such that $\sgn(\sigma) =
  (-1)^{k-1} \sgn(\tau)$, see \eqref{eq:seenbefore_1}
  and~\eqref{eq:seenbefore_2}. So we have reduced to showing
  \[[B:B \setminus z_k][B \setminus z_k:A] = (-1)^{b-a} (-1)^k [B:A] \
  .\] Now by Lemma~\ref{lem:interpretation} the left-hand side is
  given by $(-1)^\alpha \cdot (-1)^\beta$ where
  \begin{align*}
    \alpha &= \# \{ x \in B \,|\, x < z_k\}\ , \\
    \beta  &= \# \{ x<y \,|\, x,y \in B \setminus z_k, x<y, y \notin
    A\} \ ;
  \end{align*}
  the (disjoint) union of the two sets is the same as
  \begin{multline*}
    \{x<y \,|\, x,y \notin A\} \amalg \{x<y \,|\, x \in A, y \notin
    A\} \setminus \{y \,|\, z_k<y, y \notin A\} \\
    = \{x<y \,|\, x \in B, y \in B \setminus A\} \setminus \{y \,|\,
    z_k<y, y \notin A\} \ ;
  \end{multline*}
  the set taken away has clearly $b-a-k$ elements so that, using
  Lemma~\ref{lem:interpretation} again, we have
  \begin{multline*}
    [B:B \setminus z_k][B \setminus z_k:A] = (-1)^\alpha \cdot
    (-1)^\beta \\ = [B:A] (-1)^{b-a-k} = (-1)^{b-a} (-1)^k [B:A]
  \end{multline*}
  just as required.

  \smallbreak

  We now need to consider the explicit form of \[(MD^Y)_{B,A} =
  \sum_{B \supseteq \udot S \supseteq A} M_{B,S} D^Y_{S,A} \ ;\]
  plugging in all relevant definitions, and using~\eqref{eq:DXMBA}, we
  see that the difference $(MD^Y)_{B,A} - (D^XM)_{B,A}$ is given by
  the expression
  \begin{multline}
    \label{eq:MDY-DXM}
    g \circ (-1)^{ab} [B:A] \cdot H_{B \setminus A}\\
    \noalign{\smallskip}
     + \sum_{z \in B \setminus A} (-1)^b [B:B \setminus z] \cdot Gf_z
    \circ (-1)^{(b-1)a} [B \setminus z:A] \cdot H_{(B \setminus
      z) \setminus A} \\
     + \hskip -1 em \sum_{\substack{B \supseteq \udot T \supseteq A \\
        b-t \geq 2 \leq t-a}} \hskip -1 em (-1)^b (-1)^{bt} [B:T]
    \cdot GH_{B \setminus T} \circ (-1)^{at} [T:A] \cdot
    H_{T \setminus A} \\
     + \sum_{z \in B \setminus A} (-1)^b (-1)^{(a+1)b} [B:A \amalg
    z] \cdot GH_{B \setminus (A \amalg z)} \circ [A \amalg z:A] \cdot f_z \\
     + (-1)^b (-1)^{ab} [B:A] GH_{B \setminus A} \circ (-1)^a d \\
    \noalign{\smallskip}
     - (-1)^{ab} [B:A] \cdot ( g H_{B \setminus A} -GdH_{B \setminus
      A}) \ .
  \end{multline}
  To analyse the sign in the third line of this expression, the sum
  over~$T$, note that the total exponent of~$-1$ is
  \[b + bt + at \equiv b + (t-a)(b-a)+a(b-a) \equiv (b-a) + ab +
  (t-a)(b-a)\] modulo~$2$. That is, $(-1)^{ab}(-1)^{b-a} \cdot
  \big((MD^Y)_{B,A} - (D^XM)_{B,A}\big)$ equals
  \begin{multline*}
    (-1)^{b-a} [B:A] \cdot g \circ H_{B \setminus A}\\
    + \sum_{z \in B \setminus A} [B:B \setminus z] [B \setminus
    z:A] \cdot G \circ f_z \circ H_{(B \setminus z) \setminus A} \\
    + \sum_{\substack{B \supseteq \udot T \supseteq A \\ 
        b-t \geq 2 \leq t-a}} \hskip -1 em (-1)^{(t-a)(b-a)} [B:T] [T:A]
    \cdot G \circ H_{B \setminus T} \circ H_{T \setminus A} \\ 
    + (-1)^{b-a} \cdot \sum_{z \in B \setminus A} [B:A \amalg z] [A
    \amalg z:A] \cdot G \circ H_{B \setminus (A \amalg z)} \circ f_z \\
    + [B:A] \cdot G \circ H_{B \setminus A} \circ d \\
    + (-1)^{b-a} [B:A] \cdot (GdH_{B \setminus A} - g H_{B \setminus A}) \ .
  \end{multline*}
  We note that the very first summand and the very last (after
  multiplying out the last pair of parentheses) cancel. We re-write
  the remaining terms, setting $S = B \setminus A$ so that $s =
  b-a$. We also let $T$ be a subset of~$S$ so that the role of~$T$
  above is now played by~$T \amalg A$, and $t$ has to be replaced
  by~$t+a$. In addition we take out a common post-composition
  with~$G$. All told, we now want to show that the following
  expression is trivial:
  \begin{multline*}
    \sum_{z \in S} [B:B \setminus z] [B \setminus z:A] \cdot f_z
    \circ H_{S \setminus z} \\
    + \sum_{\substack{\udot T \subseteq S \\ s-t \geq 2 \leq s}}
    \hskip -1 em (-1)^{st} [B:A \amalg T] [A \amalg T:A] \cdot
    H_{S \setminus T} \circ H_T \qquad \qquad \quad\\
    + (-1)^s \cdot \sum_{z \in S} [B:A \amalg z] [A \amalg z:A]
    \cdot H_{S \setminus z} \circ f_z \qquad\\
    + [B:A] \cdot H_S \circ d \qquad\qquad\qquad\qquad\qquad\\
    + (-1)^s [B:A] \cdot d \circ H_S
    \qquad\qquad\qquad\qquad\qquad \qquad
  \end{multline*}
  Now compare this expression with the analogous one obtained by
  removing an element $x \in A$ from both~$A$ and~$B$, that is,
  obtained by the substitutions $A \mapsto A \setminus x$ and $B
  \mapsto B \setminus x$. The numbers $t$ and $s = b-a = (b-1)-(a-1)$,
  and the difference set $S = B \setminus A = (B \setminus x)
  \setminus (A \setminus x)$ are clearly unaffected by this
  change. All the products of incidence numbers, as well as the
  incidence number $[B:A] = [B:B][B:A]$ acquire a sign that depends
  on~$A$, $B$ and~$x$ only, by Lemma~\ref{lem:incidence_independent},
  so is the same for all summands. That is, up to sign the whole
  expression remains unchanged, so we may just as well consider the
  case $A= \emptyset$ only. But then the expression is trivial by
  Corollary~\ref{cor:criterion}.

  The verification for $b-a < 3$ can be done along similar lines, but
  is slightly easier due to fewer terms being involved. We omit the
  details.
\end{proof}

We introduce yet more notation: suppose we are given, in addition to
the maps~$h_i$ above, cochain maps $\alpha \colon C \rTo D$ and $\beta
\colon D \rTo C$.  We set $g = \alpha \circ \beta \colon D \rTo D$,
and let $G$ as before be a homotopy $G \colon \id_D \simeq \alpha
\circ \beta = g$ so that $dG+Gd = \alpha\beta-\id$; this is, according
to Theorem~\ref{thm:main_cube}, precisely the data required to define
the special $N$\nbd-cube $\derived(C;\, \alpha,\, \beta,\, G;\,
h_1,\,h_2,\, \cdots,\, h_n)$.  We denote its totalisation by~$Z$, and
the differential by~$D^Z=(D^Z_{B,A})_{A,B \subseteq N}$.

We let $L$ denote the (constant) diagonal matrix with entry $\beta$;
this is to be considered as a module homomorphism $Y^n \rTo Z^n$.

\begin{lemma}[\textsc{Mather} trick for derived special $N$-cubes]
  \label{lem:mather_for_analogue}
  The matrix~$L$ defines a cochain map
  \begin{multline*}
    \tot \derived(D;\, \alpha\beta,\, \id_D,\, G;\, h_1,\, h_2,\,
    \cdots,\, h_n) = Y \\
    \rTo^L Z = \tot \derived(C;\, \alpha,\, \beta,\, G;\, h_1,\,
    h_2,\, \cdots,\, h_n) \ .
  \end{multline*}
  If $\beta$~is a quasi-isomorphism, then $L$ is a quasi-isomorphism
  as well. If in addition $C$ and~$D$ are bounded above complexes of
  projective $R$\nbd-modules, then $L$ is a homotopy equivalence $L
  \colon Y \simeq Z$.
\end{lemma}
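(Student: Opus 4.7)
The plan is to verify the three assertions in order. The key observation is that $L$ is a constant diagonal matrix with entry~$\beta$, while the signs $(-1)^{ab}[B:A]$ appearing in $D^Y_{B,A}$ and $D^Z_{B,A}$ are identical. Consequently the identity $LD^Y = D^Z L$ reduces, entry by entry, to showing that $\beta \circ H^Y_{B,A} = H^Z_{B,A} \circ \beta$ for every $A \subseteq B \subseteq N$, where $H^Y, H^Z$ denote the structure maps of the $N$-diagrams $Y, Z$. For $b - a \geq 2$, the formulas of Theorem~\ref{thm:main_cube} --- applied to $\derived(D;\, \alpha\beta,\, \id_D,\, G;\, \ldots)$ with the roles of $\alpha$ and $\beta$ taken by $\alpha\beta$ and $\id_D$ --- give
\[H^Y_{B,A} = \bigg(\sum_{\sigma \in \Sigma(B \setminus A)} \sgn(\sigma)\cdot h_{\sigma(z_1)} G h_{\sigma(z_2)} \cdots G h_{\sigma(z_s)}\bigg) \circ \alpha\beta\]
and
\[H^Z_{B,A} = \beta \circ \bigg(\sum_{\sigma \in \Sigma(B \setminus A)} \sgn(\sigma)\cdot h_{\sigma(z_1)} G h_{\sigma(z_2)} \cdots G h_{\sigma(z_s)}\bigg) \circ \alpha \ ,\]
so associativity yields $\beta H^Y_{B,A} = H^Z_{B,A} \beta$ immediately. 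For $b - a = 1$ one has $f_k^Y = h_k\alpha\beta$ and $f_k^Z = \beta h_k\alpha$, and the same identity is patent; for $b = a$, $H^Y_{A,A} = d_D$ and $H^Z_{A,A} = d_C$, so the claim boils down to the fact that $\beta$ is a cochain map.

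For the quasi-isomorphism statement, I will invoke the cardinality filtration of Proposition~\ref{prop:filtration}. Being diagonal, $L$ restricts to a cochain map $\tot_k(Y) \rTo \tot_k(Z)$ for every~$k$, and the induced map on the filtration quotient
\[\bigoplus_{\substack{A \subseteq N \\ \#A = k}} \Sigma^k D \rTo \bigoplus_{\substack{A \subseteq N \\ \#A = k}} \Sigma^k C\]
is the direct sum of $k$-fold suspensions of~$\beta$, hence itself a quasi-isomorphism whenever $\beta$ is one. A downward induction on~$k$, starting from $\tot_{n+1}=0$ and invoking the five lemma applied to the long exact cohomology sequence of $0 \rTo \tot_{k+1} \rTo \tot_k \rTo \tot_k / \tot_{k+1} \rTo 0$, then shows that $L$ is a quasi-isomorphism $\tot_k(Y) \rTo \tot_k(Z)$ for all~$k$; taking $k = 0$ yields the desired conclusion.

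The third assertion is then routine: if $C$ and $D$ are bounded above complexes of projective $R$-modules, the totalisations $Y$ and $Z$ are finite direct sums of shifts of~$C$ and~$D$, hence themselves bounded above complexes of projective modules, and a quasi-isomorphism between such complexes is automatically a homotopy equivalence by the standard result recalled in the Conventions section. I do not foresee any substantive obstacle. In stark contrast with the proof of Lemma~\ref{lem:compare_triv_analogue}, where the transition matrix~$M$ had genuinely different formulas at different entries and required intricate sign bookkeeping, here the uniformity of~$L$ makes the cochain-map identity a two-line consequence of the formulas in Theorem~\ref{thm:main_cube}, and the rest is a standard filtration-plus-five-lemma argument.
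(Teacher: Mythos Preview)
Your proof is correct and follows essentially the same approach as the paper's: verify the cochain-map identity, then use the cardinality filtration of Proposition~\ref{prop:filtration} together with the five lemma, and finally invoke the standard fact about quasi-isomorphisms between bounded-above complexes of projectives. The only difference is that you spell out in detail the ``straightforward calculation'' $LD^Y = D^ZL$ that the paper leaves to the reader; your reduction to $\beta H^Y_{B,A} = H^Z_{B,A}\beta$ via the explicit formulas of Theorem~\ref{thm:main_cube} is exactly right.
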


\begin{proof}
  It is a straightforward calculation that $LD^Y = D^ZL$ so that
  $L$~defines a cochain map. The map~$L$ respects the filtration of
  totalisation, such that the induced map on filtration quotients
  \[Y_k / Y_{k+1} \rTo Z_k / Z_{k+1}\] is a direct sum of the ($k$th
  suspension of) the cochain map~$\beta$. This implies, by an
  iterative application of the five lemma for $k= n, n-1, \cdots, 0$,
  that $L \colon Y_k \rTo Z_k$ is a quasi-isomorphism. In particular,
  $L$~is a quasi-isomorphism $Y = Y_0 \rTo Z_0 = Z$. The final claim
  about bounded above complexes follows from general homological
  algebra.
\end{proof}

A more explicit treatment of these results
(Lemmas~\ref{lem:compare_triv_analogue}
and~\ref{lem:mather_for_analogue}) in the special case $n=2$ of two
\textsc{Laurent} variables can be found in
\cite[Lemma~III.7.4]{square}, with slightly different sign
conventions.

\section{Higher-dimensional mapping tori}
\label{sec:high-dim-tori}

\begin{definition}
  \label{def:high_dim_tori}
  Let $F$ be a special $N$\nbd-cube on the $R$\nbd-module cochain
  complex~$C$; we think of this as a collection of cochain maps $f_k$
  which commute up to specified coherent homotopy. Write $L =
  R[x_{1}^{\pm 1},\, x_{2}^{\pm 1},\, \cdots,\, x_{n}^{\pm
    1}]$.  We define the {\it mapping $n$\nbd-torus of~$F$}, denoted
  $\torus\, F$, to be the totalisation of the special $N$\nbd-cube
  $\bar F$ on the $L$\nbd-module complex $\bar C = C \tensor_R L$
  specified by the following data:
  \begin{itemize}
  \item differential $\bar d = d_{\bar C} = d_C \tensor 1$;
  \item $\bar f_k = f_k \tensor 1 - 1 \tensor x_k$, for $k \in N$;
  \item $\bar H_S = H_S \tensor 1$, for any $S \subseteq N$ with $s
    \geq 2$.
  \end{itemize}
\end{definition}

\begin{example}
  \label{ex:torus_of_trivial}
  For pairwise commuting self-maps $h_k$ of~$C$, we have
  \begin{multline*}
    \torus\,\triv(C;\, h_1,\, h_2,\, \cdots,\, h_n) = \tot \triv(C
    \tensor_R L;\, h_1 \tensor 1 - 1 \tensor x_1,\,\\ h_2 \tensor 1 -
    1 \tensor x_2,\, \cdots,\, h_n \tensor 1 - 1 \tensor x_n) \ .
  \end{multline*}
\end{example}

\begin{example}
  \label{ex:mapping_2-torus}
  Let $f_{0},f_{1} \colon C \rTo C$ be two cochain maps, and let $H =
  H_{\{0,1\}} \colon f_{1} f_{0}\simeq f_{0} f_{1}$ be a given
  homotopy. These data define a special $\{0<1\}$-cube~$F$,
  and up to shift, sign and naming conventions the totalisation of the
  associated special $\{0<1\}$\nbd-cube~$\bar F$ is the mapping
  $2$\nbd-torus $\mathcal{T} (f_{1},\, f_{0};\, H)$ as discussed by
  the authors in \cite[\S{}III.6]{square}.
\end{example}

Of course we need to check that the definition of mapping tori makes
sense:

\begin{lemma}
  \label{lem:torus_meaningful}
  The data listed in Definition~\ref{def:high_dim_tori} define a
  special $N$\nbd-cube.
\end{lemma}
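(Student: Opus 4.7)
My plan is to apply Corollary~\ref{cor:criterion} to the data $(\bar C,\, \bar d,\, \bar f_k,\, \bar H_S)$. Set $\phi_k = f_k \tensor 1$ and $\xi_k = 1 \tensor x_k$, so that $\bar f_k = \phi_k - \xi_k$. The pivotal structural observation is that each $\xi_k$ acts only on the \textsc{Laurent} polynomial factor and therefore commutes with $\bar d$, with every $\phi_\ell$, with every $\bar H_T = H_T \tensor 1$, and with every $\xi_\ell$. This immediately settles the routine preliminaries ($\bar d^2 = 0$ and each $\bar f_k$ is a cochain map because $\phi_k$~is), and for $s = 2$ it reduces $\bar f_\ell \bar f_k - \bar f_k \bar f_\ell$ to $(f_\ell f_k - f_k f_\ell) \tensor 1$ after noting that mixed $\phi$--$\xi$~terms and pure $\xi$-terms both cancel; matching with $\bar d \bar H_{\{k,\ell\}} + \bar H_{\{k,\ell\}} \bar d = (dH_{\{k,\ell\}} + H_{\{k,\ell\}} d) \tensor 1$ is then just the $s=2$ case of the coherence condition for~$F$.

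For $s \geq 3$ the key step is to substitute $\bar f_z = \phi_z - \xi_z$ into the coherence relation of Corollary~\ref{cor:criterion}. Since $\bar f_z$ appears only linearly there, this cleanly splits the sum into two groups. The ``$\phi$-part'', obtained by replacing every $\bar f_z$ with $\phi_z$, factors through $\tensor 1$ and is exactly the coherence relation for~$F$ at~$S$, hence vanishes by hypothesis. The ``$\xi$-part'' simplifies, using that $\xi_z$ commutes with $\bar H_{S \setminus z}$, to
\[
-\sum_{z \in S}\Bigl( [S:S\setminus z]\,[S\setminus z:\emptyset] + (-1)^s[S:z]\Bigr)\, \xi_z \circ \bar H_{S\setminus z}\ .
\]

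The one non-routine step, and the main obstacle I anticipate, is verifying that every coefficient in this sum vanishes. Using Lemma~\ref{lem:interpretation}, if $z$ is the $(p+1)$-th element of~$S$ (so there are $p$ elements of $S$ strictly below $z$), the three incidence numbers evaluate to $(-1)^p$, $(-1)^{(s-1)(s-2)/2}$, and $(-1)^{s(s-1)/2 - p}$, respectively. A short parity check then shows that the exponents of the two contributions differ by $s + (s-1)^2$, which is odd for every~$s$, so the two terms cancel. This closes the argument.
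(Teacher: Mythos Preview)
Your proof is correct and follows essentially the same route as the paper's: both apply Corollary~\ref{cor:criterion}, exploit that each $\xi_k = 1 \tensor x_k$ commutes with all maps of the form $?\tensor 1$, split the $s \geq 3$ coherence sum into the ``$\tensor\,1$'' part (vanishing because $F$ is a special $N$\nbd-cube) and the $x_z$\nbd-part, and reduce to the identity $[S:S\setminus z][S\setminus z:\emptyset] + (-1)^s[S:z] = 0$. The paper leaves this last identity as ``a pleasant combinatorial exercise''; you actually carry it out via Lemma~\ref{lem:interpretation}, which is a welcome addition (the exponent difference you record is correct modulo~$2$, which is all that is needed).
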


\begin{proof}
  We verify that the hypotheses of Corollary~\ref{cor:criterion} are
  satisfied. For $k < \ell$ we have
  \begin{align*}
    \bar d \bar H_{\{k<\ell\}} + \bar H_{\{k<\ell\}} \bar d & =
    (dH_{\{k<\ell\}} + H_{\{k<\ell\}}d) \tensor 1 \\
    &= (f_kf_\ell - f_\ell f_k) \tensor 1 \\
    &= (f_k \tensor 1 - 1 \tensor x_k) (f_\ell \tensor 1 - 1 \tensor
    x_\ell) \\
    & \qquad\qquad - (f_\ell \tensor 1 - 1 \tensor x_\ell) (f_k
    \tensor 1 - 1 \tensor x_k) \\
    &= \bar f_k \bar f_\ell - \bar f_\ell \bar f_k
  \end{align*}
  since maps of the form $? \tensor 1$ and $1 \tensor ?$,
  respectively, commute; this shows that $\bar H_{\{k<\ell\}}$ is a
  homotopy from $\bar f_\ell \bar f_k$ to $\bar f_k \bar f_\ell$ as
  required (we have of course used that $H_{\{k<\ell\}}$ is a homotopy
  from $f_\ell f_k$ to~$f_k f_\ell$ as $F$~is a special $N$\nbd-cube.)

  For $S \subset N$ with $ s \geq 3$ we observe that
  \begin{multline*}
    (-1)^s [S:\emptyset] \cdot \bar d \circ \bar H_S + [S:\emptyset] \cdot
    \bar H_S \circ \bar d \\
    + \sum_{z \in S} [S:S \setminus z][S \setminus z:\emptyset] \cdot
    \bar f_z \circ \bar H_{S \setminus z} \\
    + (-1)^s \sum_{z \in S} [S:z] \cdot \bar H_{S \setminus z} \circ \bar f_z \\
    + \sum_{\substack {\udot T \subseteq S \\ t \geq 2 \leq s-t}}
    (-1)^{ts} [S:T][T:\emptyset] \cdot \bar H_{S \setminus T} \circ
    \bar H_T \hphantom{\ ,}
  \end{multline*}
  is the same as
  \begin{multline*}
    \Big((-1)^s [S:\emptyset] \cdot d \circ H_S + [S:\emptyset] \cdot
    H_S \circ d \\
    + \sum_{z \in S} [S:S \setminus z][S \setminus z:\emptyset] \cdot
    f_z \circ H_{S \setminus z} \qquad \qquad \qquad \qquad \qquad
    \qquad \\
    + (-1)^s \sum_{z \in S} [S:z] \cdot H_{S \setminus z} \circ f_z
    \qquad \qquad \qquad \qquad \qquad \\
    \qquad \qquad + \sum_{\substack {\udot T \subseteq S \\ t \geq 2 \leq s-t}}
    (-1)^{ts} [S:T][T:\emptyset] \cdot H_{S \setminus T} \circ H_T
    \Big) \tensor 1 \\
    \noalign{\smallskip}
    - \Big( \sum_{z \in S} \big( [S:S \setminus z][S \setminus
    z:\emptyset] + (-1)^s [S:z] \big) \cdot H_{S
      \setminus z} \tensor x_z
    \Big) \ ;
  \end{multline*}
  to which the first four lines contribute nothing as $F$~is a special
  $N$\nbd-cube, using Corollary~\ref{cor:criterion}. So it is enough
  to verify that
  \[[S:S \setminus z][S \setminus z:\emptyset] + (-1)^s [S:z] = 0\]
  for every $z \in S$, which is a pleasant combinatorial exercise left
  to the interested reader.
\end{proof}

\section{The Mather trick for mapping tori}
\label{sec:mather_trick_for_tori}

Let $h_1,\, \cdots,\, h_n \colon D \rTo D$ be mutually commuting
self-maps of the $R$\nbd-module cochain complex~$D$. We denote the
mapping torus of the trivial special $N$\nbd-cube $\triv(D;\, h_1,\,
h_2,\, \cdots,\, h_n)$ by~$X$, and denote the differential of~$X$ by
$D^X = (D^X_{B,A})_{A,B \subseteq N}$.

Let $g \colon D \rTo D$ be another cochain map, and let $G \colon \id
\simeq g$ be a homotopy so that $dG+Gd= g-\id$. We can now form the
derived special $N$\nbd-cube $\derived(D;\, g,\, \id_D,\, G;\, h_1,\,
h_2,\, \cdots,\, h_n)$ according to Theorem~\ref{thm:main_cube} and
Definition~\ref{def:derived_cube}, with $\beta = \id_D$ and $\alpha =
g$. We denote its mapping torus by~$Y$, which has differential $D^Y =
(D^Y_{B,A})_{A,B \subseteq N}$. The complex $Y$ is defined in terms of
certain maps $\bar d$, $\bar f_k$ and $\bar H_S$ as prescribed in
Definition~\ref{def:high_dim_tori}.

Finally, we define a matrix $K = (K_{B,A})_{A,B \subseteq N}$ which,
{\it a priori\/}, is just a collection of module homomorphisms $Y^n
\rTo X^n$:
\[K_{B,A} = M_{B,A} \tensor 1\]
where $M_{B,A}$ is as defined in~\eqref{eq:def_M_BA}
and~\eqref{eq:def_M_S}.

\begin{lemma}
  \label{lem:compare_torus_trivial}
  The matrix $K$ defines a cochain map
  \begin{multline*}
    \torus\, \derived(D;\, g,\, \id_D,\, G;\, h_1,\, h_2,\, \cdots,\,
    h_n) = Y \\
    \rTo^K X = \torus\, \triv(D;\, h_1,\, h_2,\, \cdots,\, h_n) \ .
  \end{multline*}
  This map is a quasi-isomorphism. If $D$ is a bounded above complex
  of projective $R$\nbd-modules, then $K$ is a homotopy equivalence $K
  \colon Y \simeq X$.
\end{lemma}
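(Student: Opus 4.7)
The plan is to adapt the proof of Lemma~\ref{lem:compare_triv_analogue} to the mapping torus setting: verify the cochain-map property of~$K$ by direct calculation, deduce quasi-isomorphism from a filtration argument, and then upgrade to a homotopy equivalence via standard homological algebra.

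For the cochain-map property, I would compute $(KD^Y - D^XK)_{B,A}$ for each pair $A \subseteq B \subseteq N$, splitting each entry of the two differentials into a ``base part'' (in which every $\bar f_z = f_z \tensor 1 - 1 \tensor x_z$ is replaced by $f_z \tensor 1$) and an ``$x$-correction''. The base parts contribute exactly the entries of $(MD^{Y'} - D^{X'}M) \tensor 1$, where $X'$ and $Y'$ are the non-torus totalisations appearing in Lemma~\ref{lem:compare_triv_analogue}, and so vanish by that lemma. The $x$-corrections appear only at those entries where $\bar f_z$ occurs, namely at $D^Y_{A \amalg z, A}$ and $D^X_{B, B \setminus z}$ for $z \in B \setminus A$. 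Using the explicit formula $M_{B,A} = (-1)^b(-1)^{ab}[B:A] \cdot M_{B\setminus A}$ and the fact that $a(a+1)$ and $(b-1)b$ are always even, the cancellation of these corrections reduces to the combinatorial identity
\[
[A \amalg z : A] \cdot [B : A \amalg z] + (-1)^{b-a} \cdot [B : B \setminus z] \cdot [B \setminus z : A] = 0
\]
for each $z \in B \setminus A$. I would verify this using Lemma~\ref{lem:interpretation}: setting $k = \#\{a \in A \,|\, a < z\}$, $m = \#\{x \in (B \setminus A) \setminus z \,|\, x < z\}$ and $r = \#\{x \in (B \setminus A) \setminus z \,|\, x > z\}$, each product of incidence numbers equals $\pm [B \setminus z : A]$, and the cancellation reduces to the fact that $r + m + (b-a) = 2(b-a) - 1$ is always odd. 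This is essentially the sign identity used in the proof of Lemma~\ref{lem:torus_meaningful}, generalised from $A = \emptyset$ to arbitrary~$A$.

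The quasi-isomorphism claim then proceeds exactly as in Lemma~\ref{lem:compare_triv_analogue}: the map $K$ respects the filtration $\tot_k$ of Proposition~\ref{prop:filtration} (since $K_{B,A} = 0$ whenever $A \not\subseteq B$), and on the filtration quotient $Y_k / Y_{k+1} \rTo X_k / X_{k+1}$ it acts as a direct sum of ($k$th suspensions of) $K_{A,A} = M_{A,A} \tensor 1 = g \tensor 1_L$, because the sign factor $(-1)^{a(a+1)}$ is trivial. Since $G$ is a homotopy $\id_D \simeq g$, the map $g \tensor 1_L$ is a homotopy equivalence of $D \tensor_R L$, hence a quasi-isomorphism. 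An iterative five-lemma argument for $k = n,\, n-1,\, \cdots,\, 0$ then yields that $K \colon Y_k \rTo X_k$ is a quasi-isomorphism for each~$k$, and in particular for $k=0$. Finally, if $D$ is bounded above with each $D^n$ a projective $R$-module, then $D^n \tensor_R L$ is a projective $L$-module (as a direct summand of a free $L$-module), so $X$ and $Y$ are bounded above complexes of projective $L$-modules, and the quasi-isomorphism $K$ is then a homotopy equivalence by the standard result recalled in the Conventions. The main obstacle throughout is the combinatorial sign identity above; once that is in place, the remainder of the argument parallels Lemma~\ref{lem:compare_triv_analogue}.
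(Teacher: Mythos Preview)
Your proposal is correct and follows essentially the same route as the paper: split the entries of $KD^Y - D^XK$ into a ``base part'' that reduces to $(MD^{Y'} - D^{X'}M)\tensor 1$ and vanishes by Lemma~\ref{lem:compare_triv_analogue}, plus $x$-corrections whose cancellation is exactly the identity~\eqref{eq:pleasant_extended}; then use the filtration argument for the quasi-isomorphism. The only cosmetic difference is that you verify the sign identity directly via Lemma~\ref{lem:interpretation}, whereas the paper applies Lemma~\ref{lem:incidence_independent} to reduce to the case $A=\emptyset$ already handled in Lemma~\ref{lem:torus_meaningful}.
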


\begin{proof}
  This follows closely the proof of
  Lemma~\ref{lem:compare_triv_analogue}, and we concentrate on the
  necessary modification. The main point is to check that $K$~defines
  a cochain map; to this end we need to verify that
  \[\big( KD^Y \big)_{B,A} - \big(D^X K \big)_{B,A} = 0\] for all $A
  \subseteq B \subseteq N$.

  We focus on $\big( D^X K \big)_{B,A} = \sum_{B \supseteq \udot S
    \supseteq A} D^X_{B,S} K_{S,A}$ first, and assume that $b-a \geq
  3$. As $X$~is the totalisation of a trivial $N$\nbd-cube,
  cf.~Example~\ref{ex:torus_of_trivial}, we know that $D^X_{B,S} = 0$
  unless $\#(B \setminus S) \leq 1$, and in case of equality
  $D^X_{B,S}$ is of the form~$\pm (h_z \tensor 1 - 1 \tensor
  x_z)$. All this means that $\big( D^X K \big)_{B,A}$ equals
  \begin{multline*}
    \big( (-1)^b d \,\cdot\, (-1)^b (-1)^{ab}
    [B:A] \cdot G \circ H_{B \setminus A} \big) \tensor 1\\
    + \sum_{z \in B \setminus A} \big( [B : B \setminus z] \cdot h_z
    \circ (-1)^{b-1} (-1)^{a(b-1)} [B \setminus z : A] \cdot G \circ
    H_{(B \setminus z) \setminus A} \big) \tensor 1\\
    - \sum_{z \in B \setminus A} \big( [B : B \setminus z] \cdot
    (-1)^{b-1} (-1)^{a(b-1)} [B \setminus z : A] \cdot G \circ H_{(B
      \setminus z) \setminus A}\big) \tensor x_z
  \end{multline*}
  so that, simplifying the sign terms and introducing a term of the
  form $GdH_{B \setminus A} - GdH_{B \setminus A}$, we obtain
  \begin{multline*}
    (-1)^{ab} \big( D^X K \big)_{B,A} = \Big( [B:A] \big( (dG+Gd)
    \circ H_{B \setminus A} - GdH_{B \setminus A} \big) \Big) \tensor 1\\
    - (-1)^{b-a} \cdot \sum_{z \in B \setminus A} \big( [B : B
    \setminus z] [B \setminus z : A] \cdot h_z \circ G \circ H_{(B
      \setminus z) \setminus A} \big) \tensor 1\\
    + (-1)^{b-a} \cdot \sum_{z \in B \setminus A} \big( [B : B
    \setminus z] [B \setminus z : A] \cdot G \circ H_{(B \setminus z)
      \setminus A} \big) \tensor x_z \ .
  \end{multline*}
  The second line of this expression reduces to $\big( [B:A] \cdot
  H_{B \setminus A}\big) \tensor 1$, as we showed in the proof of
  Lemma~\ref{lem:compare_triv_analogue}; as $dG+Gd = g - \id$ this
  means that
  \begin{multline*}
    (D^X K)_{B,A} = (-1)^{ab} [B:A] \cdot ( g H_{B \setminus
      A} - GdH_{B \setminus A}) \tensor 1 \\
    + (-1)^{ab} (-1)^{b-a} \cdot \sum_{z \in B \setminus A} \big( [B :
    B \setminus z] [B \setminus z : A] \cdot G \circ H_{(B \setminus
      z) \setminus A} \big) \tensor x_z \ .
  \end{multline*}

  \smallbreak

  Plugging in all relevant definitions, and the expression for $(D^X
  K)_{B,A}$ we just obtained, we see that the difference $(KD^Y)_{B,A}
  - (D^XK)_{B,A}$ is given by a sum of two expressions: the tensor
  product of the sum~\eqref{eq:MDY-DXM} with the identity map of $L =
  R[x_{1}^{\pm 1},\, x_{2}^{\pm 1},\, \cdots,\, x_{n}^{\pm 1}]$, and
  (up to a factor of~$-(-1)^{ab}$)
  \begin{equation*}
    \sum_{z \in B \setminus A} \big( (-1)^{b-a} [B:B
    \setminus z][B \setminus z:A] + [B:A \amalg z][A \amalg z:A] \big)
    \cdot (G \circ H_{(B \setminus z) \setminus A}) \tensor x_z \ .
  \end{equation*}
  We know from the proof of Lemma~\ref{lem:compare_triv_analogue} that
  the former is trivial; to show that the latter is trivial as well it
  is enough to verify the equality
  \begin{equation}
    \label{eq:pleasant_extended}
    [B:B\setminus z][B \setminus z:A] + (-1)^{b-a} [B:A \amalg z][A
    \amalg z:A] = 0 \ .
  \end{equation}
  The left-hand side acquires a sign independent of~$z$ when removing
  an element of~$A$ from both $A$ and~$B$, by
  Lemma~\ref{lem:incidence_independent}, so that we may assume without
  loss of generality that $A = \emptyset$ and $a=0$. But as
  $[z:\emptyset]=1$ this then is precisely the pleasant combinatorial
  exercise the reader solved at the end of the proof of
  Lemma~\ref{lem:torus_meaningful}.

  \smallbreak

  The verification for $b-a < 3$ can be done along similar lines, but
  is slightly easier due to the lower number of terms involved. We
  omit the details.
\end{proof}

We introduce yet more notation: suppose we are given, in addition to
the maps~$h_i$ above, cochain maps $\alpha \colon C \rTo D$ and $\beta
\colon D \rTo C$. We set $g = \alpha \circ \beta \colon D \rTo D$, and
let $G$ as before be a homotopy $G \colon \id_D \simeq \alpha \circ
\beta = g$ so that $dG+Gd = \alpha\beta-\id$. This gives us the data
required to define the special $N$\nbd-cube $\derived(C;\, \alpha,\,
\beta,\, G;\, h_1,\,h_2,\, \cdots,\, h_n)$. We denote its mapping
torus by~$Z$, and the corresponding differential
by~$D^Z=(D^Z_{B,A})_{A,B \subseteq N}$.

We let $J$ denote the (constant) diagonal matrix with entry $\beta
\tensor 1$; this is to be considered as a module homomorphism $Y^n
\rTo Z^n$.

\begin{lemma}[\textsc{Mather} trick for mapping tori]
  \label{lem:mather}
  The matrix~$J$ defines a cochain map
  \begin{multline*}
    \torus\, \derived(D;\, \alpha\beta,\, \id_D,\, G;\, h_1,\, h_2,\,
    \cdots,\, h_n) = Y \\
    \rTo^J Z = \torus\, \derived(C;\, \alpha,\, \beta,\, G;\, h_1,\,
    h_2,\, \cdots,\, h_n) \ .
  \end{multline*}
  If $\beta$ is a quasi-isomorphism so is~$J$. If in addition $C$
  and~$D$ are bounded above complexes of projective $R$\nbd-modules,
  then $J$ is a homotopy equivalence of $L$\nbd-module complexes $J
  \colon Y \simeq Z$.
\end{lemma}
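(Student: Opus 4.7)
The plan is to follow the template of Lemma~\ref{lem:mather_for_analogue} very closely, adapting each of its three steps to the torus setting. First I would verify that $J$ is a cochain map by showing entry-wise that $JD^Y = D^Z J$; then I would use the cardinality filtration from Proposition~\ref{prop:filtration} together with an iterated five-lemma argument to reduce the quasi-isomorphism claim to a statement about $\beta \tensor 1_L$; finally I would pass from quasi-isomorphism to homotopy equivalence via the standard result on bounded-above projective complexes recalled in the Conventions section.

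For the cochain-map verification, since $J$ is the diagonal matrix with entry $\beta \tensor 1$, the equation $JD^Y = D^Z J$ splits into entry-wise identities $(\beta \tensor 1) \circ \bar H^Y_{B,A} = \bar H^Z_{B,A} \circ (\beta \tensor 1)$ for $A \subseteq B \subseteq N$. By Definition~\ref{def:high_dim_tori}, each $\bar H_{B,A}$ is the sum of a map of the form $\mathrm{const} \cdot H^{Y/Z}_{B \setminus A} \tensor 1$ coming from the underlying derived $N$\nbd-cube, together with an additional $(1 \tensor x_z)$\nbd-term when $b-a = 1$. On the $\tensor 1$ parts, the identity reduces to $\beta \circ H^Y_S = H^Z_S \circ \beta$ (with $S = B \setminus A$), and this is immediate from the explicit formulas in Theorem~\ref{thm:main_cube}: both sides equal $\beta \circ \sum_{\sigma \in \Sigma(S)} \sgn(\sigma)\, h_{\sigma(1)} G h_{\sigma(2)} G \cdots G h_{\sigma(s)} \circ \alpha\beta$ for $s \geq 2$, and the analogous identities for $s \leq 1$ follow from $\beta d = d\beta$ and $\beta h_k \alpha\beta = (\beta h_k \alpha)\beta$. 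On the extra terms, $(\beta \tensor 1)(1 \tensor x_z) = \beta \tensor x_z = (1 \tensor x_z)(\beta \tensor 1)$, so they pass through trivially. Hence $J$ is a cochain map. Notably, in contrast to Lemma~\ref{lem:compare_torus_trivial}, no delicate combinatorial identity of the type~\eqref{eq:pleasant_extended} is needed here, because the $x_z$\nbd-terms appear on both sides in identical form.

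To prove that $J$ is a quasi-isomorphism whenever $\beta$ is, I would observe that $J$ preserves the cardinality filtration of Proposition~\ref{prop:filtration} (again because it is diagonal), and that the induced map on each filtration quotient is, by Proposition~\ref{prop:filtration}, a direct sum (indexed by $A \subseteq N$ with $\#A = k$) of $k$\nbd-fold suspended copies of $\beta \tensor 1 \colon D \tensor_R L \rTo C \tensor_R L$. Since $L$ is a free, hence flat, $R$\nbd-module, $\beta \tensor 1$ is a quasi-isomorphism whenever $\beta$ is. A descending induction on $k$, starting from $\tot_{n+1} = 0$ and applying the five lemma to the long exact cohomology sequences associated with $0 \rTo \tot_{k+1} \rTo \tot_k \rTo \tot_k / \tot_{k+1} \rTo 0$, then shows that $J$ induces a quasi-isomorphism $\tot_k \rTo \tot_k$ at each stage; the case $k=0$ gives the assertion for $Y$ and $Z$ themselves.

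For the homotopy-equivalence statement, the hypotheses that $C$ and $D$ are bounded above with projective $R$\nbd-module terms imply that $C \tensor_R L$ and $D \tensor_R L$ are bounded above with projective $L$\nbd-module terms; thus $Y$ and $Z$, being finite direct sums over $A \subseteq N$ of shifted copies of such complexes, are themselves bounded-above complexes of projective $L$\nbd-modules, and the standard result from the Conventions section upgrades $J$ to a homotopy equivalence. The only real difficulty I anticipate is notational: one must keep the formulas from Theorem~\ref{thm:main_cube} straight for $Y$ (with parameters $g = \alpha\beta$, $\id_D$) and $Z$ (with parameters $\alpha$, $\beta$) simultaneously, so as to make the key identity $\beta H^Y_S = H^Z_S \beta$ (and its mates for $d$ and $f_k$) completely transparent. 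Once this is set up, no genuinely new calculation beyond those already performed in Lemma~\ref{lem:mather_for_analogue} and Lemma~\ref{lem:compare_torus_trivial} is required.
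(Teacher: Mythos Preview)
Your proposal is correct and follows essentially the same approach as the paper, which simply says ``This is straightforward; note that maps of the form $1 \tensor ?$ and $? \tensor 1$ commute.'' You have spelled out precisely what that one-line proof means, including the useful extra observation (left implicit in the paper) that $L$ is free and hence flat over~$R$, so that $\beta \tensor 1$ inherits the quasi-isomorphism property from~$\beta$.
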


\begin{proof}
  This is straightforward; note that maps of the form $1 \tensor ?$
  and $? \tensor 1$ commute.
\end{proof}

\part{Novikov homology and finite domination}
\label{part:novikov}

Before attacking the main topic of the paper, the relationship between
finite domination and \textsc{Novikov} cohomology, we need to digress
and introduce a few auxiliary algebraic constructions: truncated
products, multi-complexes, and totalisation of multi-complexes.

\section{Truncated products}
\label{sec:truncated_products}

The set~$\mathcal{P}$ of formal \textsc{Laurent\/} series $f =
\sum_{\mathbf{a}\in \bZ^n} r_\mathbf{a} \mathbf{x}^\mathbf{a}$, where
$r_\mathbf{a} \in R$ and
\[\mathbf{x}^\mathbf{a} = x_1^{a_1} x_2^{a_2} \ldots x_n^{a_n} \text {
  for } \mathbf{a} = (a_1,\, a_2,\, \cdots,\, a_n) \in \bZ^n \ ,\] has
an obvious module structure over the \textsc{Laurent} polynomial ring
$L = R[x_1^{\pm1},\, x_2^{\pm1},\, \cdots,\, x_n^{\pm1}]$ given by
multiplication and collecting terms. We define the {\it support\/}
$\mathrm{supp}(f)$ of a formal \textsc{Laurent\/} series~$f$ to be the
set of those $\mathbf{a}\in \bZ^n$ with $r_\mathbf{a} \neq 0$. The
\textsc{Novikov} ring $R\nov{x_1,\, x_2,\, \cdots,\, x_n}$ is defined
as
\begin{multline*}
  R\nov{x_1,\, x_2,\, \cdots,\, x_n} = \{ f \in \mathcal{P} \,|\,
  \exists k \in \bN \colon k \mathbf{1} + \mathrm{supp}(f) \subseteq
  \bN^n \}\\
  = R\powers{x_1,\, x_2,\, \cdots,\, x_n}[1/(x_1x_2\cdots x_n)]\ ;
\end{multline*}
here we write $\mathbf{1} = (1,\, 1,\, \cdots,\, 1) \in \bN^n$ and
$v+A = \{v+a \,|\, a \in A\}$.

Given a $\bZ^n$\nbd-indexed family of $R$\nbd-modules $M_\mathbf{a}$,
we write the elements of the infinite product $\prod_{\mathbf{a} \in
  \bZ^n} M_\mathbf{a}$ as formal \textsc{Laurent\/} series $g =
\sum_{\mathbf{a}\in \bZ^n} m_\mathbf{a} x^\mathbf{a}$, where
$m_\mathbf{a} \in M_\mathbf{a}$ corresponds to the factor indexed by
$\mathbf{a} \in \bZ^n$. The support of~$g$ is defined as above. The
{\it truncated product\/} is defined as
\[\prodtr_{\mathbf{a} \in \bZ^n} M_\mathbf{a} = \{ g \in
\prod_{\mathbf{a} \in \bZ^n} M_\mathbf{a} \,|\, \exists k \in \bN
\colon k \mathbf{1} + \mathrm{supp}(g) \subseteq \bN^n \} \ .\] 

For later use we formulate this in slightly more fancy terms. Given a
vector $(a_{1},\, a_{2},\, \cdots,\, a_{n}) = \mathbf{a} \in \bZ^{n}$
let us introduce the symbol
\begin{equation}
  \label{eq:def_hook}
  \hook{\mathbf{a}} = \min_{1 \leq i \leq
  n} a_{i} \ .
\end{equation}
Given an element $g \in \prod M_{\mathbf{a}}$ and an integer $k \in
\bZ$ we write $\hook{\mathrm{supp} (g)} \geq k$ if every $\mathbf{a}
\in \mathrm{supp}(g)$ satisfies $\hook{\mathbf{a}} \geq k$; we say
$\hook{\mathrm{supp}(g)} = k$ if in addition $\hook{\mathrm{supp}(g)}
\not \geq k+1$. --- With this notation $\prodtr_{\mathbf{a} \in \bZ^n}
M_\mathbf{a}$ consists of those elements $g \in \prod M_{\mathbf{a}}$
such that there exists $k \in \bZ$, depending on~$g$, with
$\hook{\mathrm{supp} (g)} \geq k$.

\medbreak

Of particular interest is the case of a {\it truncated power}, having
$M_\mathbf{a} = M$ for all $\mathbf{a} \in \bZ^n$; we reserve the
notation
\[M\nov{x_1,\, x_2,\, \cdots,\, x_n} = \prodtr_{\bZ^n} M\] for
this. The truncated power $M\nov{x_1,\, x_2,\, \cdots,\, x_n}$ comes
equipped with a natural $R\nov{x_1,\, x_2,\, \cdots,\,
  x_n}$\nbd-module structure described by multiplication of formal
\textsc{Laurent\/} series and using the scalar action of~$R$
on~$M$. In the case $M=R$ we obtain an equality of the
\textsc{Novikov} ring $R\nov{x_1,\, x_2,\, \cdots,\, x_n}$ with the
truncated power $\prodtr_{\bZ^n} R$.

\begin{lemma}
  \label{lem:fin_pres_module}
  For a finitely presented $R$\nbd-module $M$ there is a canonical
  isomorphism of $R\nov{x_1,\, x_2,\, \cdots,\, x_n}$\nbd-modules
  \[M \tensor_R R\nov{x_1,\, x_2,\, \cdots,\, x_n} \iso M\nov{x_1,\,
    x_2,\, \cdots,\, x_n} \ ,\] sending the elementary tensor $m
  \tensor \sum_{\mathbf{a} \in
    \bZ^n}r_{\mathbf{a}}\mathbf{x}^{\mathbf{a}}$ to the formal
  \textsc{Laurent} series $\sum_{\mathbf{a} \in \bZ^n} (m \cdot
  r_{\mathbf{a}}) \mathbf{x}^{\mathbf{a}} \in M\nov{x_1,\, x_2,\,
    \cdots,\, x_n}$.
\end{lemma}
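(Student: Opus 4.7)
The plan is standard: define the natural map, verify it on finitely generated free modules by additivity of both sides, and then reduce the general finitely presented case via a presentation and a cokernel argument.

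First I would define
\[\Phi_M \colon M \tensor_R R\nov{\mathbf{x}} \rTo M\nov{\mathbf{x}}, \qquad m \tensor \sum_{\mathbf{a}} r_{\mathbf{a}} \mathbf{x}^{\mathbf{a}} \mapsto \sum_{\mathbf{a}} (m r_{\mathbf{a}}) \mathbf{x}^{\mathbf{a}} \ ,\]
and check that it is $R$-balanced (and hence factors through the tensor product), $R\nov{\mathbf{x}}$-linear, and takes values in the truncated power: the support of $\Phi_M(m \tensor f)$ is contained in $\mathrm{supp}(f)$, so the bound $\hook{\mathrm{supp}} \geq k$ is preserved. Then I would verify that $\Phi_{R^q}$ is an isomorphism for every $q \geq 0$: both $- \tensor_R R\nov{\mathbf{x}}$ and $-\nov{\mathbf{x}}$ commute with finite direct sums (in the latter case because $q$ individual truncation bounds can be amalgamated by taking their minimum), and $\Phi_R$ is by construction the identity.

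For a general finitely presented $M$, I would choose a presentation $R^p \rTo^A R^q \rTo M \rTo 0$, apply both functors together with the natural transformation $\Phi$, and obtain a commutative diagram whose top row is right exact by right exactness of $- \tensor_R R\nov{\mathbf{x}}$. Right exactness of the bottom row
\[R^p\nov{\mathbf{x}} \rTo R^q\nov{\mathbf{x}} \rTo M\nov{\mathbf{x}} \rTo 0\]
requires a direct verification: surjectivity follows by lifting coefficientwise (sending zero coefficients to zero, so that the support, hence the truncation bound, is preserved); and exactness at $R^q\nov{\mathbf{x}}$ is analogous, since any element whose coefficientwise image in $M$ vanishes admits coefficientwise preimages under $A$ which can be chosen to vanish whenever the original coefficient does, assembling into an element of $R^p\nov{\mathbf{x}}$ with support contained in the original. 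With both rows right exact and the first two vertical maps isomorphisms, the universal property of the cokernel identifies $\Phi_M$ as an isomorphism.

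The main obstacle is precisely this verification of right exactness of $-\nov{\mathbf{x}}$ on the sequence arising from the presentation: infinite products are not exact in general, and it is the truncation condition that permits coefficientwise lifts and preimages to be chosen compatibly with the support bound. Everything else in the argument is formal.
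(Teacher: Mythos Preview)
Your proposal is correct and follows essentially the same approach as the paper, which merely sketches the argument as ``standard'': establish the isomorphism for finitely generated free modules, then use a two-step free resolution of~$M$. You have supplied the details the paper omits, in particular the verification that the truncated-power functor is right exact on the presentation sequence, which is indeed the only non-formal step.
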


\begin{proof}
  The proof is standard; details for the case $n=1$ have been
  recorded, for example, in~\cite[Lemma~2.1]{homology}. One
  establishes the result for finitely generated free $R$\nbd-modules
  first, and then passes to the general case by considering a two-step
  resolution of~$M$ by finitely generated free modules.
\end{proof}

\section{Multi-complexes and their totalisation}
\label{sec:multi_complexes}

An $(n+1)$\nbd-complex is a family of $R$\nbd-modules $E^\bullet =
(E^\mathbf{b})$, indexed by $(n+1)$\nbd-tuples
$\mathbf{b}=(b_1,b_2,\cdots,b_{n+1}) \in \bZ^{n+1}$, together with
$R$\nbd-module homomorphisms
\[d_i \colon E^{\mathbf{b}} \rTo E^{\mathbf{b}+\mathbf{e}_i} \ , \quad
1 \leq i \leq n+1 \ ,\] where $\mathbf{e}_i = (0, \cdots, 0, 1, 0,
\cdots, 0)$ is the $i$th unit vector, satisfying
\[d_i \circ d_i =0 \ , \quad \text{and} \quad d_i d_j = -d_j d_i
\text{ for } 1 \leq i < j \leq n+1 \ .\] Its {\it direct sum
  totalisation\/} is the cochain complex $\totds E^{\bullet}$ defined
by
\[(\totds E^\bullet)^k = \bigoplus_{a_1, \cdots, a_n \in \bZ}
E^{[\mathbf{a},k]} \ ,\] where $[\mathbf{a},k]= (a_1,a_2,\cdots,a_n,
k-a_1-a_2-\ldots-a_n)$, with differential $d=d_1+d_2+\ldots+d_{n+1}$.

The totalisation of a commutative $N$\nbd-diagram can be described as
the totalisation of an $(n+1)$\nbd-complex; this is a matter of
checking sign conventions with the help of
Lemma~\ref{lem:interpretation}. We record this fact:

\begin{proposition}
  \label{prop:trivial_diagram_is_totalisation}
  Let $C$ be a complex of $R$\nbd-modules, and let $f_{i}$, $1 \leq i
  \leq n$, be a collection of pairwise commuting cochain self-maps
  of~$C$.  The totalisation of $\triv(C; f_1,\, f_2,\, \cdots,\, f_n)$
  is the totalisation of an $(n+1)$\nbd-fold cochain
  complex~$E^{\bullet}$ which has the module~$C^{k}$ in degrees
  $(\epsilon_1,\, \epsilon_2,\, \cdots,\, \epsilon_n,\, k)$ where
  $\epsilon_i \in \{ 0,\,1\}$ and $k \in \bZ$. The differential in
  $\mathbf{e}_{n+1}$\nbd-direction is given by the differential of~$C$
  modified by the sign $(-1)^{\epsilon_{1} + \epsilon_{2} + \ldots +
    \epsilon_{n}}$. The differential $d_{k}$ for $1 \leq k \leq n$ is
  given by $d_{k} = (-1)^{\epsilon_{1} + \epsilon_{2} + \ldots +
    \epsilon_{k-1}} \cdot f_{k}$.\qed
\end{proposition}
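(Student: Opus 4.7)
The plan is to verify the identification at the level of graded modules first, then to check that the prescribed signs actually make $E^{\bullet}$ into an $(n+1)$\nbd-fold cochain complex, and finally to match the differential of $\tot \triv(C; f_{1},\ldots,f_{n})$ entry by entry with that of $\totds E^{\bullet}$. Every step is sign bookkeeping; Lemma~\ref{lem:interpretation} converts subset incidence numbers into the exponents that show up naturally when indexing by $\{0,1\}^{n}$.

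First I identify the underlying graded $R$\nbd-modules. A subset $A \subseteq N$ corresponds bijectively to its indicator tuple $(\epsilon_{1},\ldots,\epsilon_{n}) \in \{0,1\}^{n}$ with $\sum \epsilon_{i} = a$, and $F(A) = C$ by the definition of $\triv(C; f_{1},\ldots,f_{n})$. On the $\totds$ side the only non-zero summands in $(\totds E^{\bullet})^{\ell}$ come from $(a_{1},\ldots,a_{n}) \in \{0,1\}^{n}$, each contributing a copy of $C^{\ell - a}$. So $\tot(F)^{\ell}$ and $(\totds E^{\bullet})^{\ell}$ are both canonically $\bigoplus_{A \subseteq N} C^{\ell - a}$.

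Next I check that the prescribed $d_{i}$ make $E^{\bullet}$ a multi-complex. The relations $d_{i}^{2} = 0$ for $1 \leq i \leq n$ hold trivially because $d_{i}$ raises $\epsilon_{i}$ by one and the complex vanishes outside $\epsilon_{i} \in \{0,1\}$, while $d_{n+1}^{2} = 0$ reduces to $d_{C}^{2} = 0$. For anti-commutativity $d_{i}d_{j} + d_{j}d_{i} = 0$ with $i < j \leq n$, the two composites differ by a factor $-1$ arising from the fact that applying $d_{i}$ first flips $\epsilon_{i}$ from $0$ to $1$, thereby changing the sign $(-1)^{\epsilon_{1}+\cdots+\epsilon_{j-1}}$ attached to the subsequent $d_{j}$; the maps $f_{i}, f_{j}$ are assumed to commute. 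The mixed relation $d_{n+1} d_{k} + d_{k} d_{n+1} = 0$ comes out for the same reason, using that $f_{k}$ is a cochain map.

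Finally I match the differentials. By definition of $\triv$ and the fact that $H_{B,A} = 0$ whenever $b - a \geq 2$, the only non-zero entries of $D(F)$ are the diagonal $D_{A,A} = (-1)^{a^{2}} d_{C} = (-1)^{a} d_{C}$ and the entries $D_{B,A}$ with $B = A \amalg \{k\}$, which equal $(-1)^{a(a+1)}[B:A] \cdot f_{k} = [B:A] \cdot f_{k}$. By Lemma~\ref{lem:interpretation}, $[A \amalg \{k\}:A] = (-1)^{\#\{a \in A \,:\, a < k\}} = (-1)^{\epsilon_{1} + \cdots + \epsilon_{k-1}}$, which is exactly the sign attached to $f_{k}$ in the definition of $d_{k}$. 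On the diagonal, $(-1)^{a} = (-1)^{\epsilon_{1}+\cdots+\epsilon_{n}}$ is exactly the sign attached to $d_{C}$ in the direction $\mathbf{e}_{n+1}$. So $D(F) = d_{1} + d_{2} + \cdots + d_{n+1}$ under the identification of graded modules, completing the proof. The main obstacle is purely organisational: keeping track of which sign comes from $(-1)^{ab}$, which from $[B:A]$, and which from the indicator-tuple convention, without double-counting or dropping a $(-1)$ in the process.
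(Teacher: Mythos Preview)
Your proof is correct and matches the paper's approach exactly: the paper omits the proof entirely (the statement carries a bare \qed), merely remarking beforehand that ``this is a matter of checking sign conventions with the help of Lemma~\ref{lem:interpretation}.'' You have carried out precisely that check, and your two key identifications --- $(-1)^{a^{2}} = (-1)^{\epsilon_{1}+\cdots+\epsilon_{n}}$ on the diagonal and $[A \amalg \{k\}:A] = (-1)^{\epsilon_{1}+\cdots+\epsilon_{k-1}}$ via Lemma~\ref{lem:interpretation} --- are exactly the ones the paper has in mind.
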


\begin{corollary}
  \label{cor:triv_acyclic_tot}
  If one of the maps $f_{i}$ is a quasi-isomorphism, or if $C$~is
  acyclic, then $\tot \triv(C; f_1,\, f_2,\, \cdots,\, f_n)$ is
  acyclic.
\end{corollary}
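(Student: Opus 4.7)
The plan is to invoke Proposition~\ref{prop:trivial_diagram_is_totalisation} to identify $T_n := \tot \triv(C; f_1, f_2, \ldots, f_n)$ with the direct sum totalisation of an $(n+1)$-fold complex whose first $n$ coordinates are supported on $\{0,1\}^n$, and then to proceed by induction on the number $n$ of commuting self-maps.

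The base case $n = 1$ is classical: up to sign, $T_1$ is the mapping cone of $f_1 \colon C \to C$, which is acyclic whenever $f_1$ is a quasi-isomorphism, and also whenever $C$ itself is acyclic.

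For the inductive step ($n \geq 2$), I would pick a coordinate $i \in \{1, \ldots, n\}$ and filter $T_n$ by the value of $\epsilon_i \in \{0,1\}$. Since by Proposition~\ref{prop:trivial_diagram_is_totalisation} we have $d_i = \pm f_i$, and the remaining differentials preserve $\epsilon_i$, the positions with $\epsilon_i = 1$ form a subcomplex; moreover, both the subcomplex and the quotient are (up to degree shift) isomorphic to $T_n' := \tot \triv(C; f_1, \ldots, \widehat{f_i}, \ldots, f_n)$. This yields a short exact sequence
\begin{equation*}
  0 \to T_n'[-1] \to T_n \to T_n' \to 0 \ .
\end{equation*}

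The proof is then completed by choosing the filtration coordinate $i$ judiciously: if $C$ is acyclic, any $i$ works and the inductive hypothesis gives $H^{\ast}(T_n') = 0$; if some $f_j$ is a quasi-isomorphism, pick any $i \neq j$ (possible since $n \geq 2$), and then $T_n'$ still contains $f_j$ as one of its structure maps, hence is acyclic by induction. Either way the long exact sequence in cohomology associated to the displayed short exact sequence forces $H^{\ast}(T_n) = 0$. The main obstacle is purely notational, namely carefully tracking the signs prescribed in Proposition~\ref{prop:trivial_diagram_is_totalisation} in order to verify that filtering by $\epsilon_i$ really produces the claimed short exact sequence; no new homological ideas beyond the long exact sequence are required.
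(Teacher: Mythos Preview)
Your proof is correct. Both arguments rest on the same underlying observation---that $T_n$ can be exhibited as the totalisation of a two-column bicomplex by singling out one coordinate $\epsilon_i$---but you and the paper make opposite choices of which coordinate to isolate. The paper isolates the coordinate $i=j$ corresponding to the quasi-isomorphism $f_j$ itself, obtaining a bicomplex whose horizontal differential is (a direct sum of copies of) $\pm f_j$; acyclicity of the totalisation then amounts to the assertion that $f_j$ induces a quasi-isomorphism between the two column complexes, which are each copies of $T'_n$. You instead isolate some $i \neq j$, so that $f_j$ remains among the structure maps of $T'_n$, and finish by induction on~$n$ via the long exact sequence. Your route has the minor advantage that it never requires checking that $f_j$ induces a quasi-isomorphism on $T'_n$ (a step the paper's phrase ``acyclic rows'' glosses over, since the rows are only acyclic in the derived sense); the paper's route avoids setting up an induction and handles the case $n=1$ uniformly. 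For the case where $C$ is acyclic, both approaches work without modification.
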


\begin{proof}
  This is standard homological algebra of multi-complexes. One way to
  prove the claim is to use a ``partial totalisation''. Let
  $E^{\bullet}$ be the multi-complex described in
  Proposition~\ref{prop:trivial_diagram_is_totalisation}. If, for
  example, $f_{1}$ is a quasi-isomorphism, we may
  define a $2$\nbd-complex $D^{*,*}$ by setting
  \[D^{p,q} = \hskip -1 em \bigoplus_{a_{2}, a_{3}, \cdots, a_{n} \in
    \bZ} \hskip -1 em E^{p, a_{2}, a_{3}, \cdots, a_{n}, q-\sum_{i}
    a_{i}} \ ,\] equipped with horizontal differential~$d_{h}$ given
  by a direct sum of maps~$d_{1}$ and vertical differential~$d_{v}$
  induced by~$d_{2} + d_{3} + \ldots + d_{n+1}$. This $2$\nbd-complex
  is concentrated in the two columns $p=0$ and $p=1$, and has acyclic
  rows by our hypothesis on~$f_{1}$. Hence its totalisation, which is
  the same as the totalisation of~$E^{\bullet}$, is acyclic.
\end{proof}

The technique used to prove the Corollary for the case of acyclic~$C$
is of interest later on. We record a variant for later use:

\begin{corollary}
  \label{cor:tot_triv_by_2-complex}
  Let $C$ and $f_{i}$ be as in
  Proposition~\ref{prop:trivial_diagram_is_totalisation}. Then the
  $2$\nbd-complex
  \begin{displaymath}
    D^{p,q} = \bigoplus_{\substack {\udot A \subseteq N \\ \#A = p}} C^{q}
  \end{displaymath}
  with vertical differential
  \begin{displaymath}
    d^{v} = \bigoplus_{\substack {A \subseteq N \\ \#A =
        p}}(-1)^{p} d_{C} \colon D^{p,q} \rTo D^{p,q+1} 
  \end{displaymath}
  and horizontal differential given by $[A \amalg j : A] \cdot f_{j}$
  (for $j \notin A$) when considered as a map from $A$\nbd-summand to
  $(A \amalg j)$\nbd-summand, satisfies
  \begin{displaymath}
    \totds D^{\bullet} = \tot \triv(C; f_1,\, f_2,\, \cdots,\, f_n)
    \ .
  \end{displaymath}
  This $2$\nbd-complex~$D^{\bullet}$ is concentrated in columns $0
  \leq p \leq n$.\qed
\end{corollary}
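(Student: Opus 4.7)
The plan is a direct verification: both sides of the claimed equality are cochain complexes built on the same underlying graded module, so I would show that both the modules and the differentials coincide, summand-by-summand, and then separately confirm that $D^{\bullet,\bullet}$ is indeed a bona-fide $2$\nbd-complex in the sense of \S\ref{sec:multi_complexes}.

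First I would identify the modules. By definition
\[
(\totds D^\bullet)^\ell \;=\; \bigoplus_{p+q=\ell} D^{p,q} \;=\; \bigoplus_{A \subseteq N} C^{\ell-\#A} \ ,
\]
which matches $\tot\triv(C;\,f_1,\,\cdots,\,f_n)^\ell = \bigoplus_{A \subseteq N} C^{\ell-a}$ indexed subset-by-subset. For the differentials, the key point is that in the trivial cube $\triv(C;\,f_1,\,\cdots,\,f_n)$ we have $H_S = 0$ for all $s \geq 2$, so $H_{B,A} = 0$ whenever $b-a \geq 2$. Only two classes of entries of $D(F)$ survive: the diagonal entries $D_{A,A} = (-1)^{a^2}[A:A]\cdot d_C = (-1)^a d_C$, which match $d^v$ on the $A$\nbd-summand in column $p=a$; and the immediate-successor entries $D_{A\amalg j,A} = (-1)^{a(a+1)}[A\amalg j:A]\cdot f_j = [A\amalg j:A]\cdot f_j$ (since $a(a+1)$ is always even), which match $d^h$ exactly. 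Thus $d^h+d^v$ equals $D(F)$ entry by entry.

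For this identification to make sense I would then check that $D^{\bullet,\bullet}$ satisfies the three axioms for a $2$\nbd-complex. The relation $d^v \circ d^v = 0$ is immediate from $d_C^2 = 0$. Anti-commutativity $d^h \circ d^v + d^v \circ d^h = 0$ follows because the factor $(-1)^p$ in $d^v$ flips sign when passing from column $p$ to $p+1$, combined with the cochain-map property $d_C f_j = f_j d_C$. Finally, $d^h \circ d^h = 0$ reduces, on a given $A$\nbd-summand and a pair of distinct indices $j,k \in N \setminus A$, to comparing the two paths $A \to A\amalg j \to A \amalg\{j,k\}$ and $A \to A\amalg k \to A \amalg\{j,k\}$; their incidence-number coefficients are negatives of one another by the simplicial identity~\eqref{eq:simplicial_identities}, while the corresponding map factors $f_k f_j$ and $f_j f_k$ agree by the standing commutativity hypothesis. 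The columnwise concentration in $0 \leq p \leq n$ is immediate since $N$ has $n$ elements.

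The only real obstacle is sign bookkeeping: one must verify that $(-1)^{a^2} = (-1)^a$ and $(-1)^{a(a+1)} = 1$ together with the various $[B:A]$ factors align the general totalisation convention of Definition~\ref{def:totalisation} with the $(-1)^p$ sign chosen in the statement. Nothing deeper than the simplicial identity and the pairwise commutativity of the $f_j$'s is required.
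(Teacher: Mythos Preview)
Your proposal is correct and follows essentially the same approach as the paper: the paper marks this corollary with a bare \qed, treating it as an immediate repackaging of Proposition~\ref{prop:trivial_diagram_is_totalisation} via the same sign-convention check you spell out. Your explicit verification of the $2$\nbd-complex axioms and the entry-by-entry comparison with Definition~\ref{def:totalisation} is exactly the intended content, just written out in more detail than the paper chooses to record.
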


\medbreak

Back to a general $(n+1)$\nbd-complex~$E^{\bullet}$, its {\it
  truncated product totalisation\/} $\tottr E^\bullet$ is defined
similar to~$\totds$, using truncated products in place of direct sums:
\[(\tottr E^\bullet)^k = \prodtr_{\mathbf{a} \in \bZ^{n}}
E^{[\mathbf{a},k]} \ ,\] where $[\mathbf{a},k]= (a_1,a_2,\cdots,a_n,
k-a_1-a_2-\ldots-a_n) \in \bZ^{n+1}$, and with differential again
given by $d=d_1+d_2+\ldots+d_{n+1}$.

\begin{proposition}
  \label{prop:tr_tot_acyclic}
  Suppose that the $(n+1)$\nbd-complex $E^\bullet$ is such that for
  any $a_1, a_2,\, \cdots,\, a_n \in \bZ$ the cochain complex
  $E^{a_1,a_2,\cdots,a_n,*}$ with differential~$d_{n+1}$ is exact
  (that is, $E^\bullet$ is exact in
  $\mathbf{e}_{n+1}$\nbd-direction). Then its truncated product
  totalisation $\tottr E^\bullet$ is acyclic.
\end{proposition}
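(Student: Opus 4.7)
The plan is to construct a primitive for any cocycle \emph{by hand}, leveraging exactness in the $\mathbf{e}_{n+1}$-direction together with an induction that respects the truncation condition. Fix a cocycle $x = (x_{\mathbf{a}})_{\mathbf{a} \in \bZ^{n}} \in (\tottr E^{\bullet})^{k}$, and pick $k_{0} \in \bZ$ with $\hook{\mathrm{supp}(x)} \geq k_{0}$; the $\mathbf{a}$-component of the equation $dx = 0$ then reads $d_{n+1}(x_{\mathbf{a}}) + \sum_{i=1}^{n} d_{i}(x_{\mathbf{a} - \mathbf{e}_{i}}) = 0$. I seek $y = (y_{\mathbf{a}})$ in degree $k-1$ with $dy = x$, that is, satisfying
\begin{equation*}
  d_{n+1}(y_{\mathbf{a}}) = x_{\mathbf{a}} - \sum_{i=1}^{n} d_{i}(y_{\mathbf{a} - \mathbf{e}_{i}}) \qquad (\star)
\end{equation*}
for every $\mathbf{a} \in \bZ^{n}$.

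The recipe is to declare $y_{\mathbf{a}} = 0$ whenever some coordinate of $\mathbf{a}$ is strictly less than $k_{0}$, which automatically forces $\hook{\mathrm{supp}(y)} \geq k_{0}$ and hence $y \in \tottr E^{\bullet}$; for $\mathbf{a}$ with all coordinates $\geq k_{0}$, I define $y_{\mathbf{a}}$ by induction on the non-negative integer $\sum_{i}(a_{i} - k_{0})$. At each step every shifted index $\mathbf{a} - \mathbf{e}_{i}$ is either available by the inductive hypothesis (when $a_{i} > k_{0}$) or forced to satisfy $y_{\mathbf{a} - \mathbf{e}_{i}} = 0$ (when $a_{i} = k_{0}$), so the right-hand side of $(\star)$ is already determined, and exactness of $E^{\mathbf{a},\ast}$ under $d_{n+1}$ produces a preimage $y_{\mathbf{a}}$ — \emph{provided} that right-hand side is itself a $d_{n+1}$-cocycle.

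Verifying that cocycle condition at each inductive step is the one point requiring real calculation, and I expect it to be the main technical hurdle. The argument is to apply $d_{n+1}$ to the right-hand side of~$(\star)$, commute past each $d_{i}$ via $d_{n+1} d_{i} = - d_{i} d_{n+1}$, and then substitute the cocycle equation for $x$ into the $d_{n+1}(x_{\mathbf{a}})$ term and the inductive hypothesis $(\star)$ into each $d_{n+1}(y_{\mathbf{a} - \mathbf{e}_{i}})$ term. The resulting double sum $\sum_{i,j} d_{i} d_{j}(y_{\mathbf{a} - \mathbf{e}_{i} - \mathbf{e}_{j}})$ vanishes by antisymmetry $d_{i} d_{j} = - d_{j} d_{i}$ combined with the symmetry of the index set, while the remaining terms cancel in pairs to give zero. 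The boundary cases in which $\mathbf{a} - \mathbf{e}_{i}$ lies outside the shifted orthant are harmless, since then $x_{\mathbf{a} - \mathbf{e}_{i}} = 0$ and every $y_{\mathbf{a} - \mathbf{e}_{i} - \mathbf{e}_{j}} = 0$ as well, so $(\star)$ holds trivially at those indices. Assembling the $y_{\mathbf{a}}$ yields $y \in (\tottr E^{\bullet})^{k-1}$ with $dy = x$, proving acyclicity.
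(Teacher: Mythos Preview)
Your proposal is correct and follows essentially the same approach as the paper's proof: both set the primitive to zero outside the shifted orthant, induct on the coordinate sum $|\mathbf{a}|$ (your $\sum_i(a_i-k_0)$), verify at each step that $x_{\mathbf{a}} - \sum_i d_i(y_{\mathbf{a}-\mathbf{e}_i})$ is a $d_{n+1}$-cocycle via the anticommutation relations and the inductive hypothesis, and then invoke exactness in the $\mathbf{e}_{n+1}$-direction to produce $y_{\mathbf{a}}$. Your handling of the boundary cases matches the paper's observation that $d(b)_{\mathbf{a}}=c_{\mathbf{a}}$ holds trivially when $\hook{\mathbf{a}}<k$.
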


\begin{proof}
  Let $c \in (\tottr E^{\bullet})^{m} = \prod_{\mathbf{a} \in \bZ^{n}}
  E^{[\mathbf{a},m]}$ be a cocycle so that $d(c)=0$. We want to
  construct an element $b = (b_{\mathbf{a}})_{\mathbf{a} \in \bZ^{n}}
  \in (\tottr E^{\bullet})^{m-1}$ with $d(b)=c$. --- We have
  $\hook{\mathrm{supp}(c)} = k$, for some $k \in \bZ$; we set
  $b_{\mathbf{a}} = 0 \in E^{[\mathbf{a}, m-1]}$ whenever
  $\hook{\mathbf{a}} < k$. These elements trivially satisfy
  $d(b)_{\mathbf{a}} = c_{\mathbf{a}}$ for $\hook{\mathbf{a}} <
  k$. (Note here that while not all components of~$b$ have been
  defined yet, the expression
  \begin{equation*}
    d(b)_{\mathbf{a}} = d_{n+1} (b_{\mathbf{a}}) + \sum_{i=1}^{n} d_{i}
    (b_{\mathbf{a} - \mathbf{e}_{i}})
  \end{equation*}
  makes sense as $\hook{\mathbf{a} - \mathbf{e}_{i}} \leq
  \hook{\mathbf{a}} < k$.)

  We need to find suitable elements $b_{\mathbf{a}} \in
  E^{[\mathbf{a}, m-1]}$ for $\hook{\mathbf{a}} \geq k$. We proceed by
  induction on $\ell = |\mathbf{a}| \geq nk$, where $|\mathbf{a}| =
  \sum_{1}^{n} a_{i}$.

  So suppose $\hook{\mathbf{a}} \geq k$. For $\ell = nk$ we
  necessarily have $\mathbf{a} = (k,\, k,\, \cdots,\, k)$ and
  consequently $\hook{\mathbf{a} - \mathbf{e}_{j}} < k$. We calculate,
  using the definition of~$d$ and the equalities $d_{j} \circ d_{j} =
  0$ and $d_{i} \circ d_{j} = - d_{j} \circ d_{i}$ for $i \neq j$,
  \begin{equation}
    \label{eq:use_exactness}
    \begin{aligned}
      0 &= d(c)_{\mathbf{a}} \\
      &= d_{n+1} (c_{\mathbf{a}}) + \sum_{j=1}^{n} d_{j}
      (c_{\mathbf{a} - \mathbf{e}_{j}}) \\
      &= d_{n+1} (c_{\mathbf{a}}) + \sum_{j=1}^{n} d_{j}
      \big(d(b)_{\mathbf{a} - \mathbf{e}_{j}}\big) \\
      &= d_{n+1} (c_{\mathbf{a}}) + \sum_{j=1}^{n} d_{j} \Big( d_{n+1}
      (b_{\mathbf{a} -\mathbf{e}_{j}}) + \sum_{i=1}^{n} d_{i}
      (b_{\mathbf{a} -\mathbf{e}_{j} - \mathbf{e}_{i}}) \Big) \\
      &= d_{n+1} (c_{\mathbf{a}}) + \sum_{j=1}^{n} d_{j} d_{n+1}
      (b_{\mathbf{a} -\mathbf{e}_{j}}) \\
      &= d_{n+1} \Big( c_{\mathbf{a}} - \sum_{j=1}^{n} d_{j}
      (b_{\mathbf{a} -\mathbf{e}_{j}}) \Big)
    \end{aligned}
  \end{equation}
  so that, by our exactness hypothesis, we find an element
  $b_{\mathbf{a}} \in E^{[\mathbf{a}, m-1]}$ with
  \[d_{n+1} (b_{\mathbf{a}}) = c_{\mathbf{a}} - \sum_{j=1}^{n} d_{j}
  (b_{\mathbf{a} -\mathbf{e}_{j}}) \ .\] Then by definition of~$d$
  and the construction of~$b_{\mathbf{a}}$ we have
  \begin{equation*}
    d(b)_{\mathbf{a}} = d_{n+1} (b_{\mathbf{a}}) + \sum_{j=1}^{n} d_{j}
    (b_{\mathbf{a} - \mathbf{e}_{i}})  = c_{\mathbf{a}} \ .
  \end{equation*}

  Assume now, by induction, that for some $\ell > nk$ we have already
  constructed elements $b_{\mathbf{b}}$ for $\hook{\mathbf{b}} \geq k$
  and $|\mathbf{b}| < \ell$ satisfying $d(b)_{\mathbf{b}} =
  c_{\mathbf{b}}$. Then for any $\mathbf{a} \in \bZ^{n}$ with
  $\hook{\mathbf{a}} \geq k$ and $|\mathbf{a}| = \ell$ we notice that
  either $\hook{\mathbf{a}-\mathbf{e}_{j}} < k$ or else
  $|\mathbf{a}-\mathbf{e}_{j}| < \ell$. So using our induction
  hypothesis, the calculation~\eqref{eq:use_exactness} remains valid
  for our current~$\mathbf{a}$, which allows us to find the requisite
  element $b_{\mathbf{a}}$ in exactly the same manner as before.

  We have now defined elements $b_{\mathbf{a}} \in E^{[\mathbf{a},
    m-1]}$ for all $\mathbf{a} \in \bZ^{n}$, satisfying $d(b)=c$ by
  construction. As $c$ was arbitrary, this proves that any cocycle of
  $\tottr (E^{\bullet})$ is a coboundary.
\end{proof}

\section{From mapping tori to multi-complexes}
\label{sec:tori_as_realisations}

In this paper multi-complexes are mainly used as a tool to get
alternative representations of mapping tori. To set the stage for the
following construction, let us introduce a \textsc{Laurent} polynomial
notation for the elements of $\bZ^{n}$\nbd-indexed copowers of an
$R$\nbd-module $M$ much in the spirit of
\S\ref{sec:truncated_products}:
\begin{multline*} 
  \qquad M[x_1^{\pm 1},\, x_2^{\pm 1},\, \cdots,\, x_n^{\pm
    1}] = \bigoplus_{\mathbf{a} \in \bZ^{n}} M \\
  = \Big\{ \sum_{\mathbf{a} \in \bZ^{n}} m_{\mathbf{a}}
  \mathbf{x}^{\mathbf{a}} \,|\, m_{\mathbf{a}} = 0 \text{ for almost
    all } \mathbf{a} \Big\} \qquad
\end{multline*}
The element $m_{\mathbf{a}}$ belongs of course to the summand indexed
by~$\mathbf{a}$. We have an obvious isomorphism
\begin{equation}
  \label{eq:iso_M_Laurent}
  M \tensor_{R} R[x_1^{\pm 1},\, x_2^{\pm 1},\, \cdots,\, x_n^{\pm 1}]
  \iso M[x_1^{\pm 1},\, x_2^{\pm 1},\, \cdots,\, x_n^{\pm 1}] \ ,
\end{equation}
sending the elementary tensor $m \tensor \sum_{\mathbf{a} \in
  \bZ^n}r_{\mathbf{a}}\mathbf{x}^{\mathbf{a}}$ to the formal
\textsc{Laurent\/} polynomial $\sum_{\mathbf{a} \in \bZ^n} (m \cdot
r_{\mathbf{a}}) \mathbf{x}^{\mathbf{a}} \in M[x_1^{\pm 1},\, x_2^{\pm
  1},\, \cdots,\, x_n^{\pm 1}]$.

Now let $F$ be a special $N$\nbd-cube on the $R$\nbd-module cochain
complex~$C$, specified by the usual data of differential $d = d_C$,
cochain maps $f_i$, and (higher) homotopies~$H_S$. Let $T = \tot\,F$
denote the totalisation of~$F$. Define an $(n+1)$\nbd-multi-complex
$\pL(F)^{\bullet}$ by saying that {\it at position $\mathbf{a} \in
  \bZ^n$, the complex $\pL(F)^{\mathbf{a}, *}$ in
  $\mathbf{e}_{n+1}$\nbd-direction is the shift $T\big[\hskip 0.5pt
  |\mathbf{a}| \hskip 0.5pt \big]$ of~$T$}, where $T\big[\hskip 0.5pt
|\mathbf{a}| \hskip 0.5pt \big]^\ell = T^{\ell + |\mathbf{a}|}$, with
differential as in~$T$ re-indexed suitably.  We need to specify the
differential in $\mathbf{e}_{k}$\nbd-direction, $1 \leq k \leq
n$.\footnote{We are using $N = \{1,\, 2,\, \cdots,\, n\}$ here}
Recall that, by definition of totalisation of $N$\nbd-cubes, we have
\begin{gather*}
  \pL(F)^{\mathbf{a}, \ell} = T^{\ell + |\mathbf{a}|} = \bigoplus_{A
    \subseteq N} C^{\ell + |\mathbf{a}| - a} \\
  \noalign{\noindent and similarly}
  \pL(F)^{\mathbf{a}+ \mathbf{e}_{k}, \ell} = T^{\ell + |\mathbf{a}|+1} =
  \bigoplus_{A \subseteq N} C^{\ell + |\mathbf{a}| + 1 - a} \ .
\end{gather*}
Now the differential~$d_{k}$ in $\mathbf{e}_{k}$\nbd-direction
restricted to the $A$\nbd-summand is trivial if $k \in A$. Otherwise,
it is given by multiplication with the sign $-[A \amalg k : A]$
followed by inclusion into the $A \amalg \{k\}$\nbd-summand.

The differentials anti-commute. To show that, it is enough to compare
$d_{k} d_{\ell}$ and $d_{\ell} d_{k}$ considered as maps from
$A$\nbd-summand to $B$\nbd-summand, for $A, B \subseteq N$ (that is,
we restrict to the $A$\nbd-summand and co-restrict to the
$B$\nbd-summand). For $k,\ell \leq n$ both composites are zero, by
definition of the differentials, unless $B = A \amalg \{k,\, \ell\}$,
in which case $d_{k} d_{\ell} = -d_{\ell} d_{k}$ by the simplicial
identities~\eqref{eq:simplicial_identities}. For $\ell = n+1$ and $k
\leq n$ we need to recall the definition of the differential in the
totalisation of a special $N$\nbd-cube as the matrix $\Big( (-1)^{st}
[T:S] H_{T \setminus S} \big)_{N \supseteq T \supseteq S}$. By
definition of~$d_{k}$ we only need to consider the case $ B \ni k
\notin A$. The composition $d_{n+1} d_{k}$ gives us
\[(-1)^{b(a+1)} [B:A \amalg k] H_{B \setminus (A \amalg k)} \cdot
\big(-[A \amalg k:A]\big)\] while we have
\[\big( -[B:B \setminus k] \big) \cdot (-1)^{a(b-1)} [B \setminus k:A]
H_{(B \setminus k) \setminus A}\] for the composition $d_{k}
d_{n+1}$. To say that these have opposite sign amounts to saying that
\begin{equation*}
  [B:B\setminus k][B \setminus k:A] + (-1)^{b-a} [B:A \amalg k][A
  \amalg k:A] = 0 \ ,
\end{equation*}
after cancelling a common factor of $(-1)^{ab}$. Now for $b-a = 1$ (so
that $B = A \amalg k$) this is trivial, for $b-a=2$ this is a
reformulation of the simplicial
identities~\eqref{eq:simplicial_identities}, and for $b-a \geq 3$ this
is identity~\eqref{eq:pleasant_extended} again.

\begin{proposition}
  \label{prop:torus_by_tot}
  The isomorphism \eqref{eq:iso_M_Laurent} induces an isomorphism of
  $R[x_1^{\pm 1},\, x_2^{\pm 1},\, \cdots,\, x_n^{\pm 1}]$\nbd-module
  complexes $\totds \pL(F)^{\bullet} \iso \torus F$.
\end{proposition}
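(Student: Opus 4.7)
The plan is to upgrade the isomorphism \eqref{eq:iso_M_Laurent} to an isomorphism of graded $L$-modules in degree~$k$, and then verify that the two differentials agree under this identification. The bookkeeping of indices and signs has been arranged in \S\ref{sec:tori_as_realisations} and Definition~\ref{def:high_dim_tori} precisely so that this matching is forced; there is no deep obstacle, only a careful comparison of contributions.

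First I would set up the graded-module isomorphism. By \eqref{eq:iso_M_Laurent} applied to each $C^{k-a}$, the $A$\nbd-summand of $(\torus F)^{k}$ is
\[\bar C^{k-a} = C^{k-a}\tensor_{R} L \iso C^{k-a}[x_{1}^{\pm 1},\, \cdots,\, x_{n}^{\pm 1}] = \bigoplus_{\mathbf{a}\in\bZ^{n}} C^{k-a}\mathbf{x}^{\mathbf{a}}\ .\]
Summing over $A\subseteq N$ and interchanging the order of direct sums gives
\[(\torus F)^{k} = \bigoplus_{A} \bar C^{k-a} \iso \bigoplus_{\mathbf{a}} \bigoplus_{A} C^{k-a} = \bigoplus_{\mathbf{a}} T^{k} = (\totds \pL(F)^{\bullet})^{k}\ ,\]
the last equality by the definition of $\pL(F)$, since the multi-degree $[\mathbf{a},k]$ corresponds to $T[\lvert\mathbf{a}\rvert]^{k-\lvert\mathbf{a}\rvert} = T^{k}$. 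The iso is $L$\nbd-linear because $L$ acts on the right-hand side by $x_{j}\cdot\mathbf{x}^{\mathbf{a}} = \mathbf{x}^{\mathbf{a}+\mathbf{e}_{j}}$, which is exactly the $L$\nbd-module structure of the \textsc{Laurent} polynomial module.

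Next I would check that the two differentials agree on a generic element $c\mathbf{x}^{\mathbf{a}}$ sitting in the $A$\nbd-summand (so $c\in C^{k-a}$). On the torus side, $\bar D = ((-1)^{ab}[B:A]\bar H_{B\setminus A})_{B\supseteq A}$ produces four kinds of contributions, corresponding to $B\setminus A = \emptyset$, $\{k\}$ (via $f_{k}\tensor 1$), $\{k\}$ (via $-1\tensor x_{k}$), and $\#(B\setminus A)\geq 2$. The first, second, and fourth land at position~$\mathbf{a}$ and give, respectively, $(-1)^{a}d(c)\mathbf{x}^{\mathbf{a}}$, $[A\amalg k:A]f_{k}(c)\mathbf{x}^{\mathbf{a}}$, and $(-1)^{ab}[B:A]H_{B\setminus A}(c)\mathbf{x}^{\mathbf{a}}$ in the respective $B$\nbd-summands. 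The third lands at the shifted position~$\mathbf{a}+\mathbf{e}_{k}$ and gives $-[A\amalg k:A]\, c\mathbf{x}^{\mathbf{a}+\mathbf{e}_{k}}$ in the $(A\amalg k)$\nbd-summand.

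On the multi-complex side, $d_{n+1}$ acts at fixed position~$\mathbf{a}$ as the totalisation differential $D(F)$ of~$T$, whose $(B,A)$\nbd-entry is $(-1)^{ab}[B:A]H_{B\setminus A}$ (using $H_{\emptyset}=d$, $H_{\{k\}}=f_{k}$); this reproduces exactly the three contributions at unchanged position. The remaining contribution at shifted position is produced by~$d_{k}$, which by the definition in \S\ref{sec:tori_as_realisations} sends $c\mathbf{x}^{\mathbf{a}}$ in the $A$\nbd-summand (for $k\notin A$) to $-[A\amalg k:A]\, c\mathbf{x}^{\mathbf{a}+\mathbf{e}_{k}}$ in the $(A\amalg k)$\nbd-summand, and is zero for $k\in A$. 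This matches the $-1\tensor x_{k}$ contribution precisely.

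In view of this correspondence there is nothing else to check; the hardest part is already settled in \S\ref{sec:tori_as_realisations}, where the $d_{k}$ were defined with the sign $-[A\amalg k:A]$ and the anti-commutation of the $d_{i}$ was verified using~\eqref{eq:simplicial_identities} and~\eqref{eq:pleasant_extended}, which are exactly the identities forced by the torus differential~$\bar D$ squaring to zero. The proof is therefore completed by assembling these observations and noting that the comparison is $L$\nbd-linear by construction.
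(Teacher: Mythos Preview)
Your proof is correct and follows essentially the same approach as the paper: both establish the graded $L$\nbd-module isomorphism via \eqref{eq:iso_M_Laurent} and then match the differentials, with you spelling out the four types of contributions explicitly while the paper leaves this as ``tedious but straightforward checking''. The only quibble is the notational overload of~$k$ (used both for the total degree and for an element of~$N$), which you might want to disambiguate.
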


\begin{proof}
  By construction we have
  \[\big(\totds \pL(F)^\bullet\big)^k = \bigoplus_{a_1, \cdots, a_n
    \in \bZ} \pL(F)^{[\mathbf{a},k]} \ ,\] where $[\mathbf{a},k]=
  (a_1,a_2,\cdots,a_n, k-a_1-a_2-\ldots-a_n)$; but
  $\pL(F)^{[\mathbf{a},k]} = T^{k}$, by definition
    of~$\pL(F)^{\bullet}$, so that 
  \[\big(\totds \pL(F)^\bullet\big)^k = \bigoplus_{\mathbf{a} \in
    \bZ^{n}} T^{k} = T^{k}[x_1^{\pm 1},\, x_2^{\pm 1},\, \cdots,\,
  x_n^{\pm 1}] \underset{\eqref{eq:iso_M_Laurent}} \iso T^{k} \tensor_{R}
  L \ .\] Next we note, using distributivity of tensor products,
  that $T \tensor_{R} L$ is the cochain complex underlying the special
  $N$\nbd-cube used to define the mapping torus of~$F$:
  \begin{multline*}
    \big( T \tensor_{R} L \big)^{k} = T^{k} \tensor_{R} L = \Big(
    \bigoplus_{A
      \subseteq N} C^{k - a} \Big) \tensor_{R} L \\
    \iso \bigoplus_{A \subseteq N} \big( C^{k - a} \tensor_{R} L \big)
    = \bigoplus_{A \subseteq N} \big( C \tensor_{R} L \big)^{k - a}
  \end{multline*}
  It is now a matter of tedious but straightforward checking that
  under these identifications the differentials $d_{1} + d_{2} +
  \ldots + d_{n+1}$ of $\totds \pL(F)^{\bullet}$ and of $\torus F = \tot
  \bar F$, with $\bar F$~as in Definition~\ref{def:high_dim_tori},
  agree. Indeed, the action of the maps $1 \tensor x_{k}$ is encoded
  in the differential~$d_{k}$, while the effect of all other structure
  maps of~$F$ (the differential $d \tensor 1$, the maps $f_{k} \tensor
  1$, the homotopies $H_{S} \tensor 1$) is captured by the
  differential in $\mathbf{e}_{n+1}$\nbd-direction.
\end{proof}

\section{Replacing $L$-module complexes by mapping tori}
\label{sec:more}

We want to show that any cochain complex~$D$ of modules over the
\textsc{Laurent} polynomial ring $L = R[x_1^{\pm 1},\, x_2^{\pm 1},\,
\cdots,\, x_n^{\pm 1}]$ can be written as a mapping torus. We can, by
restriction of scalars, consider $D$ as a complex of~$R$-modules, as
will often be done in the sequel. In particular, we can form a new
$L$\nbd-module cochain complex $D \tensor_R L$. Note that the $n$
self-maps $x_k \tensor \id - \id \tensor x_k \colon D \tensor_R L \rTo
D \tensor_R L$ commute pairwise.

\medbreak

The mapping torus $\torus\, \triv(D;\, x_1,\, x_2,\, \cdots,\, x_n)$
is, by definition, the totalisation of the commutative $N$\nbd-cubical
diagram
\[\triv(D \tensor_R L;\, x_1 \tensor \id - \id \tensor x_1,\,
x_2 \tensor \id - \id \tensor x_2,\, \cdots,\, x_n \tensor \id - \id
\tensor x_n) \ ,\] with module in cochain level~$m$ being given by
$\bigoplus_{A \subseteq N} D^{m-a} \tensor_{R} L$.  The assignment $z
\tensor p \mapsto z \cdot p$ on the $N$\nbd-summand, and $z \tensor p
\mapsto 0$ on all other summands, defines an $L$\nbd-linear cochain
map
\[\psi \colon \torus\, \triv(D;\, x_1,\, x_2,\, \cdots,\, x_n) \rTo
\Sigma^{n} D \ .\]

\begin{lemma}
  \label{lem:resolution}
  The map $\psi$ is a quasi-isomorphism. It is a homotopy equivalence
  if $D$ is a bounded above complex of projective $L$\nbd-modules.
\end{lemma}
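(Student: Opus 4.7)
The plan is to show that the mapping cone of $\psi$ is acyclic, which yields the quasi\nbd-isomorphism claim; the homotopy\nbd-equivalence assertion then follows from the general fact cited in the \textsc{Conventions} section, once we note that $\torus\,\triv(D;\,x_1,\,\cdots,\,x_n)$ is itself a bounded above complex of projective $L$\nbd-modules. Indeed, in cohomological degree~$m$ it equals $\bigoplus_{A \subseteq N} D^{m-a} \tensor_R L$, and each summand is $L$\nbd-projective (via the action on the second tensor factor) because $D^{m-a}$ is $R$\nbd-projective: $L$~is free as an $R$\nbd-module, so projectivity is preserved under restriction of scalars from $L$ to~$R$.

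To establish acyclicity of the cone I would use Corollary~\ref{cor:tot_triv_by_2-complex} to present the mapping torus as the direct\nbd-sum totalisation of the $2$\nbd-complex
\[
E^{p,q} = \bigoplus_{\substack{A \subseteq N \\ \#A = p}} D^{q} \tensor_R L \qquad (0 \le p \le n,\ q \in \bZ),
\]
whose horizontal differential is built from the commuting cochain self\nbd-maps $\bar f_k = x_k \tensor 1 - 1 \tensor x_k$ and whose vertical differential is $(-1)^{p} d_D \tensor 1$. I then augment $E^{\bullet,\bullet}$ by a single new column $\tilde E^{n+1,q} = D^{q}$, with horizontal boundary $E^{n,q} \rTo \tilde E^{n+1,q}$ given by the multiplication map $\mu \colon D^{q} \tensor_R L \rTo D^{q}$, $z \tensor p \mapsto z \cdot p$, on the unique $(A = N)$\nbd-summand; a direct bookkeeping check shows that $\totds \tilde E^{\bullet,\bullet}$ coincides, up to a single cohomological shift, with the mapping cone of~$\psi$.

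The key step is to verify that for every $q$ the row $\tilde E^{\bullet,q}$ is exact: this row is the augmented \textsc{Koszul} complex on the commuting endomorphisms $y_k := \bar f_k$ of $M := D^q \tensor_R L$, augmented to $D^q$ via~$\mu$. I will check by direct calculation that $(y_1,\,\cdots,\,y_n)$ is a regular sequence on $M$ whose iterated quotient is~$D^q$: writing an element of $M/(y_1,\,\cdots,\,y_{k-1})M$ as a finitely supported formal \textsc{Laurent} polynomial with coefficients in~$D^q$, injectivity of multiplication by~$y_k$ reduces to showing that the recurrence $x_k z_{\mathbf{a}} = z_{\mathbf{a} - \mathbf e_k}$ admits no nontrivial finitely supported solution, while after killing all~$y_k$ the two $L$\nbd-actions on~$M$ have been identified, leaving $D^q \tensor_L L \iso D^q$ via~$\mu$. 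Exact rows in a $2$\nbd-complex bounded in the horizontal direction ($0 \le p \le n+1$) then imply acyclicity of $\totds \tilde E^{\bullet,\bullet}$ by a standard descending induction on the maximal $p$\nbd-level of a cocycle. The chief bookkeeping obstacle I anticipate is aligning the signs of the \textsc{Koszul}\nbd-type horizontal differential in $\tilde E^{\bullet,\bullet}$ with those dictated by Corollary~\ref{cor:tot_triv_by_2-complex}, but this is purely combinatorial and reducible to the incidence\nbd-number identities of Lemma~\ref{lem:interpretation}.
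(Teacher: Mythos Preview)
Your proposal is correct and structurally identical to the paper's proof: both re-express the mapping torus via Corollary~\ref{cor:tot_triv_by_2-complex}, augment by an extra column to obtain the mapping cone of~$\psi$, and reduce to exactness of the rows.

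The only substantive difference is in how row exactness is established. You verify directly that $(y_{1},\,\cdots,\,y_{n})$ with $y_{k} = x_{k} \tensor 1 - 1 \tensor x_{k}$ is a regular sequence on $D^{q} \tensor_{R} L$, using the finite-support recurrence argument (which works precisely because $x_{k}$ acts invertibly on the $L$\nbd-module~$D^{q}$). The paper instead takes a more structured route: it recognises the $q$th row as the mapping torus $\torus\,\triv(D^{q};\, x_{1},\,\cdots,\,x_{n})$, builds once and for all the \textsc{Koszul} resolution $K = \tot\,\triv(S;\, x_{1},\,\cdots,\,x_{n})$ of~$R$ over the polynomial ring $S = R[x_{1},\,\cdots,\,x_{n}]$, upgrades the resulting quasi-isomorphism to an $R$\nbd-linear homotopy equivalence (since $K$ consists of free $R$\nbd-modules), tensors with~$L$ over~$R$ to get an $L$\nbd-linear homotopy equivalence $\epsilon \colon \torus(F) \rTo L[n]$, and finally applies $D^{q} \tensor_{S} (-)$ to obtain~$\psi_{D^{q}}$ as a homotopy equivalence. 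Your approach is more elementary and self-contained; the paper's buys a universal statement that transfers to any $L$\nbd-module by functoriality, and yields homotopy equivalence at the row level without further argument. Either suffices for the Lemma.
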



\begin{proof}
  Re-write the mapping torus $\torus\, \triv(D;\, x_1,\, x_2,\,
  \cdots,\, x_n)$ as the totalisation of the $2$\nbd-complex
  $D^{\bullet}$ associated to the commutative diagram
  \[\triv(D \tensor_R L;\, x_1 \tensor \id - \id \tensor x_1,\,
  x_2 \tensor \id - \id \tensor x_2,\, \cdots,\, x_n \tensor \id - \id
  \tensor x_n)\] according to
  Corollary~\ref{cor:tot_triv_by_2-complex}. Let $\hat D^{\bullet}$
  denote the $2$\nbd-complex which agrees with~$D^{\bullet}$
  everywhere except for the $(n+1)$st column where $\hat D^{n+1,q} =
  D^{q}$; the new differentials are given as follows:
  \begin{align*}
    d_{h} \colon & \hat D^{n,q} = D^{q} \tensor_{R} L \rTo^{\psi}
    D^{q} =
    \hat D^{n+1, q} \\
    d_{v} \colon & \hat D^{n+1,q} = D^{q} \rTo^{(-1)^{n+1}d_{D}}
    D^{q+1} = \hat D^{n+1, q+1}
  \end{align*}
  Then $\totds \hat D^{\bullet}$ is isomorphic to the mapping cone
  of~$\psi$, whence it is enough to show that $\totds \hat
  D^{\bullet}$ is acyclic. For this it is clearly sufficient to verify
  that $\hat D^{\bullet}$ has exact rows; we may in fact, without loss
  of generality, restrict attention to the $0$th row $q = 0$.

  So the claim to verify is the following: {\it Given an
    $L$\nbd-module $M = D^{0}$, the map
    \begin{displaymath}
      \psi_{M} = \psi \colon D^{*,0} \rTo M[n] \ , \quad D^{n,0} = M
      \tensor_{R} L \ni z \tensor p \mapsto z \cdot p
    \end{displaymath}
    is a quasi-isomorphism from the $0$th row of~$D^{\bullet}$ to the
    module~$M$, considered as a cochain complex concentrated in
    degree~$n$.}

  The crucial observation is that the $0$th row $D^{*,0}$
  of~$D^{\bullet}$ is nothing but the mapping torus $\torus \, \triv
  (M;\, x_1,\, x_2,\, \cdots,\, x_n)$ of a trivial $N$\nbd-cube.  We
  make use of this fact as follow.  Let $S = R[x_{1},\, x_{2},\,
  \cdots,\, x_{n}]$ be the polynomial ring, 
  %
  and let $F$ denote the commutative cubical diagram $\triv (S;\,
  x_1,\, x_2,\, \cdots,\, x_n)$, considered as a special $N$\nbd-cube
  of $R$-module complexes. Let $\pL(F)^{\bullet}$ be the multi-complex
  associated to~$F$ according to
  \S\ref{sec:tori_as_realisations}. Then, as we have seen in
  Proposition~\ref{prop:torus_by_tot}, $\torus (F) \iso \totds
  \pL(F)^{\bullet}$. By construction $\pL(F)^{\bullet}$ has a shifted
  copy of the cochain complex $K = \totds (F)$ in $(n+1)$-direction
  everywhere. Now $K$ is actually the \textsc{Koszul} complex of~$S$
  associated with the regular sequence $(x_{1},\, x_{2},\, \cdots,\,
  x_{n})$ so that $K$~is quasi-isomorphic, via the canonical
  projection, to $S/(x_{1},\, x_{2},\, \cdots,\, x_{n})[n] = R[n]$
  (that is, the module~$R$ considered as a complex concentrated in
  degree~$n$); since $K$ consists of free $R$\nbd-modules, this
  quasi-isomorphism is actually a homotopy equivalence of
  $R$\nbd-module complexes. Upon taking $\bZ^{n}$\nbd-indexed copowers
  we thus obtain a homotopy equivalence
  \[\epsilon \colon \torus (F) \rTo^{\simeq} R[x_{1}^{\pm 1},\,
  x_{2}^{\pm 1},\, \cdots,\, x_{n}^{\pm 1}][n] = L[n]\] of
  $L$\nbd-module complexes. Note that the cochain modules of $\torus
  (F)$ are direct sums of modules of the type $S \tensor_{R} L$. Thus
  we can form the map $M \tensor_{S} \epsilon$, tensoring source and
  target of~$\epsilon$ over~$S$ with~$M$.  The source of $M
  \tensor_{S} \epsilon$ is canonically isomorphic to the mapping torus
  of $\triv (M;\, x_1,\, x_2,\, \cdots,\, x_n)$, the target $M
  \tensor_{S} L$ is canonically isomorphic to~$M$, and $\psi_{M} = M
  \tensor \epsilon$ is a homotopy equivalence of $L$\nbd-module
  complexes as required.
\end{proof}

\begin{corollary}
  \label{cor:long_chain_quasi_iso}
  Suppose that we are given a complex~$D$ of~$L$\nbd-modules, an
  $R$\nbd-module complex~$C$, and mutually inverse homotopy
  equivalences of $R$\nbd-module complexes
  \[\alpha \colon C \rTo D \qquad \text{and} \qquad \beta \colon D \rTo
  C \ .\] Let $G: \id_D \simeq \alpha \circ \beta$ be a homotopy
  from~$\id_D$ to~$\alpha \circ \beta$. Then the cochain
  complex~$\Sigma^{n} D$ is quasi-isomorphic by $L$\nbd-linear maps to
  the complex
  \begin{equation*}
    \torus \derived(C;\,\alpha,\, \beta,\, G;\, x_1,\, x_2,\, \cdots,\, x_n) \ .
  \end{equation*}
  If in addition both $C$ and~$D$ are bounded above and consist of
  projective $R$\nbd-modules (restricting the $L$\nbd-module structure
  to~$R$ in the case of~$D$) the complexes $\Sigma^{n} D$ and $\torus
  \derived(C;\,\alpha,\, \beta,\, G;\, x_1,\, x_2,\, \cdots,\, x_n)$
  are actually homotopy equivalent as $L$\nbd-module complexes.
\end{corollary}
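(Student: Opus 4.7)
The plan is to combine three $L$\nbd-linear quasi-isomorphisms established earlier in the paper into a zig-zag relating $\Sigma^{n} D$ with the target $\torus\,\derived(C;\, \alpha,\, \beta,\, G;\, x_{1},\, \cdots,\, x_{n})$. Throughout I regard the multiplications by~$x_{i}$ as mutually commuting cochain self-maps of the underlying $R$\nbd-module complex of~$D$, which provides exactly the data required by Lemmas~\ref{lem:resolution}, \ref{lem:compare_torus_trivial}, and~\ref{lem:mather}.

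The zig-zag has three arrows. First, Lemma~\ref{lem:resolution} applied to~$D$ yields the $L$\nbd-linear quasi-isomorphism
\[\psi \colon \torus\,\triv(D;\, x_{1},\, \cdots,\, x_{n}) \rTo \Sigma^{n} D \ .\]
Second, taking $g = \alpha\beta$, the given homotopy~$G$, and $h_{i} = x_{i}$, Lemma~\ref{lem:compare_torus_trivial} supplies the $L$\nbd-linear quasi-isomorphism
\[K \colon \torus\,\derived(D;\, \alpha\beta,\, \id_{D},\, G;\, x_{1},\, \cdots,\, x_{n}) \rTo \torus\,\triv(D;\, x_{1},\, \cdots,\, x_{n}) \ .\]
Third, the \textsc{Mather} trick for mapping tori (Lemma~\ref{lem:mather}), applied with the given $\alpha$, $\beta$, $G$ and $h_{i} = x_{i}$, gives
\[J \colon \torus\,\derived(D;\, \alpha\beta,\, \id_{D},\, G;\, x_{1},\, \cdots,\, x_{n}) \rTo \torus\,\derived(C;\, \alpha,\, \beta,\, G;\, x_{1},\, \cdots,\, x_{n}) \ ,\]
which is a quasi-isomorphism because the homotopy equivalence~$\beta$ of $R$\nbd-module complexes is in particular a quasi-isomorphism. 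The zig-zag with middle vertex $\torus\,\derived(D;\, \alpha\beta,\, \id_{D},\, G;\, x_{1},\, \cdots,\, x_{n})$ and arrows $\psi$, $K$, $J$ now establishes the first (quasi-isomorphism) assertion of the corollary.

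For the homotopy equivalence statement, the hypothesis that $C$ and~$D$ are bounded above complexes of projective $R$\nbd-modules is tailored to the stronger clauses of the three lemmas. The cochain modules of each mapping torus involved in the zig-zag are finite direct sums of modules of the form $C^{q}\tensor_{R} L$ or $D^{q}\tensor_{R} L$; since $L$ is free as an $R$\nbd-module, these are projective $L$\nbd-modules, and boundedness above is inherited from $C$ and~$D$. Lemmas~\ref{lem:compare_torus_trivial} and~\ref{lem:mather} therefore directly promote $K$ and~$J$ to $L$\nbd-linear chain homotopy equivalences. For~$\psi$ I would reuse the construction in the proof of Lemma~\ref{lem:resolution}: there $\psi$ is built by tensoring, over the polynomial subring $R[x_{1},\, \cdots,\, x_{n}]$, a single $L$\nbd-linear chain homotopy equivalence~$\epsilon$ (arising from the \textsc{Koszul} complex of the regular sequence $x_{1},\, \cdots,\, x_{n}$) with each cochain module~$D^{q}$, and the $L$\nbd-linear chain homotopies inherited from~$\epsilon$ assemble level-wise into an $L$\nbd-linear chain homotopy inverse of~$\psi$.

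The main obstacle I anticipate is precisely this last point: Lemma~\ref{lem:resolution}'s explicitly stated clause about homotopy equivalence demands that $D$ consist of projective $L$\nbd-modules, which is strictly stronger than the $R$\nbd-projectivity assumed in the corollary. Dealing with this rigorously forces one to unpack the proof of Lemma~\ref{lem:resolution} and verify that the $L$\nbd-linear contracting homotopy produced there is natural enough in its module argument to survive being tensored over the polynomial subring with arbitrary $R$\nbd-projective cochain modules of~$D$. Once this naturality is in hand, stitching the three $L$\nbd-linear chain homotopy equivalences together through the zig-zag immediately yields the asserted $L$\nbd-linear homotopy equivalence.
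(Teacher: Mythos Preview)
Your proposal is correct and follows precisely the paper's own argument: the same three-step zig-zag through Lemmas~\ref{lem:resolution}, \ref{lem:compare_torus_trivial}, and~\ref{lem:mather}, with the paper dispatching the homotopy-equivalence clause in a single phrase (``The final part is automatic''). You are in fact more scrupulous than the paper about the last step, correctly observing that Lemma~\ref{lem:resolution} as stated demands $L$\nbd-projectivity of~$D$ while the corollary assumes only $R$\nbd-projectivity, and your proposed fix---extracting from the proof of Lemma~\ref{lem:resolution} that $\psi$ arises by tensoring the \textsc{Koszul} homotopy equivalence~$\epsilon$ with~$D$ over the polynomial ring, hence is an $L$\nbd-linear homotopy equivalence regardless---is exactly right.
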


\begin{proof}
  We have a chain of quasi-isomorphisms
  \begin{align*} \Sigma^{n} D &
    \lTo[l>=3em]^\sim_{\ref{lem:resolution}} \torus \triv(D; x_1,\,
    x_2,\, \cdots,\, x_n) \\
    \noalign{\smallskip}
    & \lTo[l>=3em]^\sim_{\ref{lem:compare_torus_trivial}} \torus
    \derived (D;\, \alpha\beta,\, \id,\, G;\, x_1,\, x_2,\, \cdots,\,
    x_n) \\
    \noalign{\smallskip}
    & \rTo[l>=3em]^\sim_{\ref{lem:mather}} \torus
    \derived(C;\, \alpha,\, \beta,\, G;\, x_1,\, x_2,\, \cdots,\, x_n
    ) \ .
  \end{align*}
  The final part is automatic.
\end{proof}

\section{Finite domination implies vanishing of Novikov cohomology}
\label{sec:findom_implies_zero}

\begin{theorem}
  \label{thm:findom_triv_1st_quadrant}
  Suppose $D$ is a bounded cochain complex of projective
  $L$\nbd-modules. Suppose further that $D$ is $R$\nbd-finitely
  dominated. Then the induced cochain complex $D \tensor_L
  R\nov{x_1,\, x_2,\, \cdots,\, x_n}$ is acyclic, and thus
  contractible.
\end{theorem}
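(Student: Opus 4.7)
My plan is to identify $D\tensor_L R\nov{x_1,\,\ldots,\,x_n}$, up to a shift, with the truncated product totalisation of a multi-complex whose $\mathbf{e}_{n+1}$\nbd-direction is acyclic, and then to invoke Proposition~\ref{prop:tr_tot_acyclic}.

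Finite domination of~$D$ provides a bounded complex~$C$ of finitely generated projective $R$\nbd-modules together with mutually inverse $R$\nbd-linear homotopy equivalences $\alpha\colon C\to D$ and $\beta\colon D\to C$ and a homotopy $G\colon \id_D\simeq\alpha\beta$. Setting $F = \derived(C;\,\alpha,\,\beta,\,G;\,x_1,\,\ldots,\,x_n)$, Corollary~\ref{cor:long_chain_quasi_iso} gives a quasi-isomorphism of $L$\nbd-module complexes between $\Sigma^n D$ and $\torus F$. Both complexes are bounded and consist of finitely generated projective $L$\nbd-modules (the case of $\torus F$ follows by base change from the finitely generated projective $R$\nbd-modules $C^k$), so this quasi-isomorphism is in fact a homotopy equivalence of $L$\nbd-module complexes, and is preserved by applying $-\tensor_L R\nov{x_1,\,\ldots,\,x_n}$. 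It thus suffices to show that $\torus F \tensor_L R\nov{x_1,\,\ldots,\,x_n}$ is acyclic.

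Proposition~\ref{prop:torus_by_tot} realises $\torus F$ as the direct sum totalisation $\totds \pL(F)^\bullet$; in degree~$k$ the cochain module is $T^k\tensor_R L$, where $T=\tot F$. Each $T^k = \bigoplus_A C^{k-a}$ is a finite direct sum of finitely generated projective, and hence finitely presented, $R$\nbd-modules, so Lemma~\ref{lem:fin_pres_module} yields a canonical isomorphism
\[T^k \tensor_R L \tensor_L R\nov{x_1,\,\ldots,\,x_n} \iso T^k\nov{x_1,\,\ldots,\,x_n} = \prodtr_{\mathbf{a}\in\bZ^n} T^k = (\tottr \pL(F)^\bullet)^k\ .\]
Tracking the differentials---the $L$\nbd-action $1\tensor x_k$ corresponds under the identification $\bigoplus_\mathbf{a}T^k \iso T^k\tensor_R L$ to the shift $\mathbf{a}\mapsto \mathbf{a}+\mathbf{e}_k$ that encodes the $\mathbf{e}_k$\nbd-differential of $\pL(F)^\bullet$, exactly as in the proof of Proposition~\ref{prop:torus_by_tot}---assembles these into an isomorphism of cochain complexes $\torus F \tensor_L R\nov{x_1,\,\ldots,\,x_n} \iso \tottr\pL(F)^\bullet$.

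It remains to verify the hypothesis of Proposition~\ref{prop:tr_tot_acyclic}, namely that the $\mathbf{e}_{n+1}$\nbd-direction of $\pL(F)^\bullet$---a shift of $T = \tot F$---is exact. The $\tot$\nbd-level Mather trick takes care of this: the quasi-isomorphism versions of Lemmas~\ref{lem:compare_triv_analogue} and~\ref{lem:mather_for_analogue} produce a zig-zag of quasi-isomorphisms linking $\tot F$ to $\tot \triv(D;\,x_1,\,\ldots,\,x_n)$ via $\tot \derived(D;\,\alpha\beta,\,\id_D,\,G;\,x_1,\,\ldots,\,x_n)$. The crucial and essentially only place where the $L$\nbd-module structure on~$D$ enters is the observation that multiplication by~$x_1$ on~$D$ is a cochain automorphism (because $x_1$ is a unit in~$L$), and in particular a quasi-isomorphism. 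Corollary~\ref{cor:triv_acyclic_tot} then gives acyclicity of $\tot\triv(D;\,x_1,\,\ldots,\,x_n)$, whence of~$T$. Proposition~\ref{prop:tr_tot_acyclic} now yields acyclicity of $\tottr\pL(F)^\bullet$, and the identifications above give acyclicity of $D\tensor_L R\nov{x_1,\,\ldots,\,x_n}$. Contractibility is then automatic: $D\tensor_L R\nov{x_1,\,\ldots,\,x_n}$ is a bounded complex of projective $R\nov{x_1,\,\ldots,\,x_n}$\nbd-modules, hence contractible. The most delicate point will be the careful matching of differentials in the isomorphism $\torus F\tensor_L R\nov{x_1,\,\ldots,\,x_n} \iso \tottr\pL(F)^\bullet$; everything else is a clean assembly of tools already established.
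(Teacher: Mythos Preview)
Your proposal is correct and follows essentially the same route as the paper's proof: pass from $\Sigma^{n}D$ to $\torus F$ via Corollary~\ref{cor:long_chain_quasi_iso}, identify $\torus F \tensor_{L} R\nov{x_{1},\ldots,x_{n}}$ with $\tottr \pL(F)^{\bullet}$ using Lemma~\ref{lem:fin_pres_module}, and then verify acyclicity in the $\mathbf{e}_{n+1}$\nbd-direction via the $\tot$\nbd-level \textsc{Mather} trick (Lemmas~\ref{lem:compare_triv_analogue} and~\ref{lem:mather_for_analogue}) together with Corollary~\ref{cor:triv_acyclic_tot}, finishing with Proposition~\ref{prop:tr_tot_acyclic}. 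One inessential slip: you assert that $D$ consists of \emph{finitely generated} projective $L$\nbd-modules, but the hypothesis only says projective; this does not matter, since Corollary~\ref{cor:long_chain_quasi_iso} already delivers the homotopy equivalence (projectivity over~$R$ suffices, and $L$ is free over~$R$).
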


\begin{proof}
  By hypothesis there exists a bounded complex~$C$ of finitely
  generated projective $R$\nbd-modules, and mutually inverse
  $R$\nbd-linear homotopy equivalences $\alpha \colon C \rTo D$ and
  $\beta \colon D \rTo C$. By Corollary~\ref{cor:long_chain_quasi_iso}
  we have a homotopy equivalence of $L$\nbd-module complexes
  \begin{equation*}
    \Sigma^{n} D \simeq \torus \derived(C;\, \alpha,\, \beta,\, G;\, x_1,\,
    x_2,\, \cdots,\, x_n) =: P \ ,
  \end{equation*}
  where $G \colon \id_D \simeq \alpha \circ \beta$ is a chosen
  homotopy.  Consequently, the complexes remain homotopy equivalent
  after tensoring (over~$L$) with the \textsc{Novikov} ring
  $R\nov{x_1,\, x_2,\, \cdots,\, x_n}$, and the Theorem will be proved
  if we can show that the right-hand side becomes acyclic.

  To this end, we carry the ideas underpinning
  \S\ref{sec:tori_as_realisations} one step further, employing the
  notion of truncated product totalisations instead of ordinary
  totalisation. Let $F = \derived(C;\, \alpha,\, \beta,\, G;\, x_1,\,
  x_2,\, \cdots,\, x_n)$ so that $P = \torus F$, and let
  $\pL(F)^{\bullet}$ be the multi-complex associated to~$F$ as in
  \S\ref{sec:tori_as_realisations}. Then $P \iso \tot
  \pL(F)^{\bullet}$, as observed in
  Proposition~\ref{prop:torus_by_tot}. By virtually the same argument,
  we see that
  \[P \tensor_{L} R\nov{x_1,\, x_2,\, \cdots,\, x_n} \iso \tottr
  \pL(F)^{\bullet} \ ;\] this uses the cancellation rule
  \[M \tensor_{R} L \tensor_{L} R\nov{x_1,\, x_2,\, \cdots,\, x_n}
  \iso M \tensor_{R} R\nov{x_1,\, x_2,\, \cdots,\, x_n}\] and
  Lemma~\ref{lem:fin_pres_module} (and the fact that every finitely
  generated projective $R$\nbd-module is finitely presented).

  There is a chain of homotopy equivalences of $R$\nbd-module
  complexes
  \begin{align*} 0 \rTo[l>=3em]^\sim & \tot \triv(D; x_1,\, x_2,\, \cdots,\, x_n) \\
    \noalign{\smallskip}
    \lTo[l>=3em]^\sim_{\ref{lem:compare_triv_analogue}} & \tot
    \derived (D;\, \alpha\beta,\, \id,\, G;\, x_1,\, x_2,\, \cdots,\,
    x_n) \\
    \noalign{\smallskip}
    \rTo[l>=3em]^\sim_{\ref{lem:mather_for_analogue}} & \tot
    \derived(C;\, \alpha,\, \beta,\, G;\, x_1,\, x_2,\, \cdots,\, x_n
    ) = \tot(F)\ ,
  \end{align*}
  the first one from Corollary~\ref{cor:triv_acyclic_tot} (as
  multiplication by~$x_{k}$ is an isomorphism, hence a
  quasi-isomorphism). This shows that the multi-complex
  $\pL(F)^{\bullet}$ is acyclic in
  $\mathbf{e}_{n+1}$\nbd-direction. Consequently its truncated product
  totalisation is acyclic by Proposition~\ref{prop:tr_tot_acyclic}.
\end{proof}

\section{The main theorem}
\label{sec:cones}

\subsection*{Cones}

Let $M \iso \bZ^{n}$ be a lattice of rank~$n$. The group algebra
$R[M]$ is isomorphic to the \textsc{Laurent} polynomial ring in $n$
indeterminates; a choice of basis $b_{1},\, b_{2},\, \cdots,\, b_{n}$
of~$M$ determines one such isomorphism, with $\pm b_{i}$ being
identified with $x_{i}^{\pm 1}$. Abstractly, we may think of $R[M]$ as
the set of all functions $M \rTo R$ with finite support; the product
is defined via the usual convolution type formula.

Let $M_{\bR} = M \tensor_{\bZ} \bR$ be the $n$\nbd-dimensional
$\bR$-vector space associated with~$M$, and note that we have a
natural inclusion $M \subset M_{\bR}$, $m \mapsto m \tensor 1$. A {\it
  rational polyhedral cone\/} in~$M_{\bR}$, or shorter just {\it
  cone}, is a set $\sigma \subseteq M_{\bR}$ for which there exist
finitely many elements $v_{i} \in M$ with
\begin{equation*}
  \sigma = \mathrm{cone}\, \{v_{1},\, v_{2},\, \cdots,\, v_{\ell}\}
  = \{ \lambda_{1}v_{1}+ \lambda_{2}v_{2} + \ldots + \lambda_{\ell}
  v_{\ell} \,|\, \lambda_{i} \in \bR_{\geq 0}\} \ .
\end{equation*}
The {\it dimension\/} of~$\sigma$ is $\dim \sigma = \dim
\mathrm{span}_{\bR} (\sigma)$, the $\bR$\nbd-dimension of its linear
span. The {\it cospan\/} of~$\sigma$ is the largest linear subspace
contained in~$\sigma$, which is precisely $\sigma \cap (-\sigma)$. We
call $\sigma$ a {\it pointed cone\/} if its cospan is trivial (\ie,
consists of the zero vector only). Since our cones are spanned by
finitely many vectors, this is equivalent to saying that there exists
a hyperplane $H \subset M_{\bR}$ with $H \cap \sigma = \{0\}$ such
that $\sigma \setminus \{0\}$ is entirely contained in one of the open
half spaces determined by~$H$.

\begin{lemma}
  \label{lem:basis_inside_cone}
  Given an $n$\nbd-dimensional pointed cone~$\sigma$ there exists a
  basis of~$M$ consisting of elements of $M \cap \sigma$.\qed
\end{lemma}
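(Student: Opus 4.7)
The plan is to produce a primitive lattice vector in the interior of~$\sigma$, extend it to a $\bZ$-basis of~$M$, and then perturb the remaining basis vectors into $\sigma$ by adding sufficiently large multiples of the primitive vector. First, since by definition $\sigma = \mathrm{cone}\{v_1,\ldots,v_\ell\}$ with $v_i \in M$ and $\dim \sigma = n$, the sum $w' = v_1 + \cdots + v_\ell$ lies in the relative interior of~$\sigma$ (a standard fact of convex geometry: a strictly positive combination of a spanning set lies in the relative interior), which equals $\mathrm{int}(\sigma)$ because $\sigma$ is full-dimensional. Dividing $w'$ by the $\gcd$ of its coordinates in any basis of~$M$ yields a primitive lattice vector~$w$, and positive scaling preserves membership in $\mathrm{int}(\sigma)$, so $w \in M \cap \mathrm{int}(\sigma)$.

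Next, I would invoke the standard fact that a primitive vector in a free abelian group of rank~$n$ extends to a $\bZ$-basis (equivalently, any primitive $w$ can be mapped to the first standard basis vector by an element of $\mathrm{GL}_n(\bZ)$). Thus there exist $e_2,\ldots,e_n \in M$ such that $(w,\, e_2,\ldots,\, e_n)$ is a $\bZ$-basis of~$M$.

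Finally, for each $i \geq 2$, I would set $v_i := e_i + N_i w$ for an integer $N_i$ chosen sufficiently large. Openness of $\mathrm{int}(\sigma)$, the identity $e_i + t w = t(w + e_i/t)$, and the convergence $w + e_i/t \to w \in \mathrm{int}(\sigma)$ as $t \to \infty$, together show that $v_i \in \mathrm{int}(\sigma) \subseteq \sigma$ once $N_i$ is large enough. The change-of-basis matrix from $(w,\, e_2,\ldots,\, e_n)$ to $(w,\, v_2,\ldots,\, v_n)$ is upper unitriangular, hence unimodular, so $(w,\, v_2,\ldots,\, v_n)$ is a $\bZ$-basis of~$M$ contained in $M \cap \sigma$, as required.

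The argument is elementary, and the most delicate ingredient is the first step of exhibiting a lattice point in $\mathrm{int}(\sigma)$; everything else---extending a primitive vector to a basis, and perturbing along a fixed interior direction so as to land inside the cone---is routine. Note that the pointedness hypothesis does not appear to be used in this proof; only $n$-dimensionality (equivalently, non-emptiness of the interior) and rationality of~$\sigma$ enter.
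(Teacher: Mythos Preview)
Your proof is correct; the paper omits the argument entirely (the lemma is stated with a terminal \qed{} and no proof), so there is no approach to compare against. Your closing observation that pointedness is unused is also correct---only full-dimensionality and rationality of~$\sigma$ enter the construction.
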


\subsection*{Novikov rings determined by cones}

An $n$\nbd-dimensional pointed cone~$\sigma$ determines a ring $R
\nov{\sigma}$ in the following manner: Choose a vector~$w$ in the
interior of~$\sigma$ (if $\sigma$~is spanned by vectors~$v_{i}$ as in
the definition of a cone above, then $w = v_{1} + v_{2} + \ldots +
v_{\ell}$ will do), and define
\[R \nov{\sigma} = \{ f \colon M \rTo R \,|\, \exists k \geq 0: kw +
\mathrm{supp}(f) \subseteq \sigma \} \ .\footnote{We write \(v+A\) for
  the set \(\{v+a \,|\, a \in A\}\)}\] As before, multiplication is
given by a convolution type formula (which makes sense as the cone is
pointed). We call $R \nov{\sigma}$ the {\it \textsc{Novikov} ring
  associated to~$\sigma$}. It is independent from the specific choice
of vector~$w$.

\begin{example}
  In $M = \bZ^{n}$ we choose the standard basis $e_{1},\, e_{2},\,
  \cdots,\, e_{n}$ consisting of unit vectors. Let $\sigma$ be the
  cone spanned by the basis vectors. Then the identification of $R[M]$
  with the \textsc{Laurent} polynomial ring $L = R[x_1^{\pm 1},\,
  x_2^{\pm 1},\, \cdots,\, x_n^{\pm 1}]$ extends to an identification
  of $R \nov{\sigma}$ with the \textsc{Novikov} ring $R \nov{x_{1},\,
    x_{2},\, \cdots,\, x_{n}}$ introduced in
  \S\ref{sec:truncated_products}.
\end{example}

Suppose now more generally that $\sigma$ is an $n$\nbd-dimensional
cone with possibly non-trivial cospan~$U$. Write $u = \dim (U)$, and
define $\bar M_{\bR} = M_{\bR}/U$. The image~$\bar M$ of~$M$ in~$\bar
M_{\bR}$ is a lattice of rank~$n-u$ such that there is a canonical
identification $\bar M_{\bR} = \bar M \tensor_{\bZ} \bR$, and the
image~$\bar \sigma$ of~$\sigma$ in~$\bar M_{\bR}$ is a pointed
$(n-u)$-dimensional cone. Choose a vector $\bar w$ in the interior
of~$\bar \sigma$, and define
\[R \nov{\sigma} = \{\bar f \colon \bar M \rTo R[U \cap M] \,|\,
\exists k \geq 0: k \bar w + \mathrm{supp} \bar f \subseteq \bar
\sigma \} \ .\] As before, this is a ring with convolution type
product formula; in fact, $R \nov{\sigma} = R[U \cap M] \nov{\bar
  \sigma}$. We call this the {\it \textsc{Novikov} ring associated
  to~$\sigma$}. It is independent from the specific choice of
vector~$\bar w$. After choosing a splitting $M = (M \cap U) \oplus
\bar M$ we can consider $R \nov{\sigma}$ as a subset of the set
$\mathrm{map}\,(M,R)$ of maps $M \rTo R$. More explicitly, the ring
$R[U \cap M]$ is the set of maps $U \cap M \rTo R$ with finite
support; the element $\bar f \colon \bar M \rTo R[U \cap M]$
corresponds to the map
\[M = (M \cap U) \oplus \bar M \rTo R \ , \quad m = (u, \bar m)
\mapsto \bar f (\bar m) (u) \ .\]

\begin{example}
  In $M = \bZ^{n}$ we choose the standard basis $e_{1},\, e_{2},\,
  \cdots,\, e_{n}$ consisting of unit vectors. Let $\sigma$ be the
  cone spanned by the basis vectors and the vector $-e_{n}$. Then the
  identification of $R[M]$ with the \textsc{Laurent} polynomial ring
  $L = R[x_1^{\pm 1},\, x_2^{\pm 1},\, \cdots,\, x_n^{\pm 1}]$ extends
  to an identification of $R \nov{\sigma}$ with the \textsc{Novikov}
  ring $R[x_{n}^{\pm 1}] \nov{x_{1},\, x_{2},\, \cdots,\, x_{n-1}}$,
  that is, a \textsc{Novikov} ring in $(n-1)$ indeterminates over a
  \textsc{Laurent} polynomial ring in one indeterminate.
\end{example}

\subsection*{Dual cones}

The dual $N = \hom_{\bZ} (M,\, \bZ)$ of~$M$ is again a lattice of
rank~$n$; its associated vector space $N_{\bR} = N \tensor_{\bZ} \bR$
is naturally identified with the dual $\hom_{\bR} (M_{\bR},\, \bR)$
of~$M_{\bR}$. We denote by $\langle \,\cdot\,,\,\cdot\,\rangle$ the
canonical evaluation pairing $M_{\bR} \times N_{\bR} \rTo \bR$. We
define (rational polyhedral) cones in~$N_{\bR}$ just as we did
in~$M_{\bR}$ above. Given a cone $\sigma$ in~$N_{\bR}$, we define its
{\it dual\/} to be the set
\[\sigma^{\vee} = \{x \in M_{\bR} \,|\, \forall y \in \sigma:
\langle x,\, y\rangle \geq 0 \} \ .\] It can be shown that the dual of
a cone is a cone, and that
\[\dim (\sigma^{\vee}) = \mathrm{codim}\, \mathrm{cospan}\, (\sigma)\]
so that the dual of a pointed cone is an $n$\nbd-dimensional cone. We
have $\{0\}^{\vee} = M_{\bR}$; the dual of a one-dimensional pointed
cone in~$N_{\bR}$ is a closed half space in~$M_{\bR}$.

\subsection*{Fans}

A fan is a finite complex of pointed cones covering~$N_{\bR}$. More
formally, a (finite, complete) {\it fan\/} is a finite collection
$\Delta$ of pointed cones in~$N_{\bR}$ such that
\begin{enumerate}
\item the intersection of any two cones in~$\Delta$ is a face of
  either cone, and an element of~$\Delta$;
\item $\bigcup_{\sigma \in \Delta} \sigma = N_{\bR}$.
\end{enumerate}

\subsection*{The main theorem}

We are now in a position to state and prove the main result of this
paper: a complete cohomological characterisation of finite domination
using the \textsc{Novikov} rings encoded by the non-trivial cones in a
given fan.

\begin{theorem}
  \label{thm:toric_criterion}
  Let $D$ be a bounded cochain complex of finitely generated free
  $R[M]$\nbd-modules.
  \begin{enumerate}[{\rm (a)}]
  \item Suppose that $D$ is $R$\nbd-finitely dominated.
    \begin{enumerate}[{\rm (i)}]
    \item The complex $D$ is $R[M \cap U]$\nbd-finitely dominated for
      any linear subspace $U \subseteq M_{\bR}$ which is spanned by
      elements of~$M$.
    \item For every $n$\nbd-dimensional cone~$\sigma \neq M_{\bR}$ the
      induced cochain complex $D \tensor_{R[M]} R \nov{\sigma}$ is
      acyclic.
    \end{enumerate}
  \item Let $\Delta$ be a (complete) fan. Suppose that for every cone
    $\{0\} \neq \sigma \in \Delta$ the cochain complex $D \tensor_{R[M]}
    R \nov{\sigma^{\vee}}$ is acyclic. Then $D$ is $R$\nbd-finitely
    dominated.
  \end{enumerate}
\end{theorem}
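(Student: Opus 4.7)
\emph{Strategy.} Part~(a)(ii) is deduced by reducing to the ``first-quadrant'' setting governed by Theorem~\ref{thm:findom_triv_1st_quadrant}, after splitting off the cospan of~$\sigma$ and changing lattice coordinates; this reduction passes through part~(a)(i). Part~(b), the converse direction, requires the combinatorics of the fan~$\Delta$: from the Novikov rings $R\nov{\sigma^{\vee}}$ one assembles a Čech-type complex that, after tensoring with~$D$, exhibits~$D$ as quasi-isomorphic to a bounded complex of projective $R$\nbd-modules.

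\emph{Part~(a)(i).} Extend a basis of $M \cap U$ to a basis $e_{1}, \ldots, e_{n}$ of~$M$; then $R[M] = R[M \cap U][e_{u+1}^{\pm 1}, \ldots, e_{n}^{\pm 1}]$. The iterated Ranicki-type characterisation of~\cite{Iteration} expresses $R[M \cap U]$\nbd-finite domination of~$D$ via a system of Novikov acyclicities involving only the complementary variables $e_{u+1}, \ldots, e_{n}$; these form a sub-list of the conditions characterising $R$\nbd-finite domination, so they are automatic from the hypothesis.

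\emph{Part~(a)(ii).} Let $U$ be the cospan of~$\sigma$, of dimension~$u$, and choose a splitting $M = (M \cap U) \oplus M'$. Under this splitting $R\nov{\sigma} \iso R[M \cap U]\nov{\bar{\sigma}}$, where $\bar{\sigma} \subseteq M'_{\bR}$ is pointed of full dimension $n-u$. By~(i), $D$ is $R[M \cap U]$\nbd-finitely dominated. Using Lemma~\ref{lem:basis_inside_cone}, pick a basis $b_{1}, \ldots, b_{n-u}$ of~$M'$ sitting inside~$\bar{\sigma}$, and let $\tau \subseteq \bar{\sigma}$ be the simplicial cone it spans. Identifying $M' \iso \bZ^{n-u}$ via this basis and applying Theorem~\ref{thm:findom_triv_1st_quadrant} with ground ring $R[M \cap U]$, we obtain acyclicity of $D \otimes R[M \cap U]\nov{\tau}$. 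To pass from~$\tau$ to~$\bar{\sigma}$, subdivide $\bar{\sigma}$ into finitely many simplicial subcones, each spanned by a basis of~$M'$; acyclicity of~$D$ tensored with the corresponding Novikov ring holds for each piece by the same argument, and a Mayer--Vietoris/Čech argument assembles these into the acyclicity for~$\bar{\sigma}$ (intersection terms involve lower-dimensional cones whose Novikov rings are covered by~(i) after enlarging~$U$).

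\emph{Part~(b) and main obstacle.} Assemble the Novikov rings into a cochain complex
\begin{equation*}
  \mathcal{C}^{k} = \bigoplus_{\substack{\sigma \in \Delta \\ \dim \sigma = k}} R\nov{\sigma^{\vee}}\quad (1 \leq k \leq n),
\end{equation*}
of $R[M]$-modules, with differentials induced by the face incidences of~$\Delta$. Completeness of~$\Delta$ (equivalently, that the non-zero cones of $\Delta$ triangulate a sphere homotopy equivalent to $S^{n-1}$) should imply that $\mathcal{C}^{\bullet}$ is, up to a shift by~$n$, a resolution of~$R[M]$. Tensoring~$D$ with this resolution and invoking the hypothesis that each $D \otimes R\nov{\sigma^{\vee}}$ is acyclic forces~$D$ to be quasi-isomorphic to a bounded complex of finitely generated projective $R$\nbd-modules; the infinite sums/products arising from the Novikov coefficients are controlled by the truncated-product vanishing criterion of Proposition~\ref{prop:tr_tot_acyclic}. \emph{The principal difficulty} lies in verifying that the Čech-type complex behaves as a resolution with the correct signs and that the finite-domination datum for~$D$ can be extracted explicitly; this is where the derived-cube machinery of Part~I and the mapping-torus techniques of Part~II are expected to enter, supplying the coherent homotopy data needed to glue the local Novikov acyclicities into a global finite-domination witness for~$D$.
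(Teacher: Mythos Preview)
Your plan for~(a)(i) via the iterative characterisation of~\cite{Iteration} is a legitimate alternative to the paper's route, which instead uses Corollary~\ref{cor:long_chain_quasi_iso} directly: writing $L' = R[M \cap U]$, the paper exhibits $\Sigma^{u} D \simeq \torus \derived(C;\alpha,\beta,G;x_{1},\ldots,x_{u})$, and the right-hand side is visibly a bounded complex of finitely generated projective $L'$-modules. Either argument works.

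In~(a)(ii) you overcomplicate the passage from the simplicial subcone~$\tau$ to~$\bar\sigma$. No subdivision or Mayer--Vietoris is needed (and indeed a Čech-type exact sequence among the rings $R\nov{\tau_{i}}$ is not available: the Novikov ring of a union of cones is not assembled from those of the pieces in any simple way). The paper's observation is that $\tau \subseteq \bar\sigma$ forces a ring inclusion $L'\nov{x_{u+1},\ldots,x_{n}} \subseteq L'\nov{\bar\sigma} = R\nov{\sigma}$; since $D \otimes_{L} L'\nov{x_{u+1},\ldots,x_{n}}$ is acyclic and bounded free, it is contractible, and contractibility survives the further base change to~$R\nov{\sigma}$.

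Your plan for~(b) is where the real gap lies. The paper does \emph{not} build a Čech complex from the Novikov rings, and the machinery of Parts~I and~II is not used here at all. Instead the paper invokes a theorem of \textsc{Sch\"utz}~\cite[Theorem~4.7]{Schuetz-Sigma}: $D$ is $R$-finitely dominated if and only if $D \otimes_{R[M]} R[M]^{\wedge}_{\xi}$ is acyclic for every non-zero $\xi \in N_{\bR}$. Given such a~$\xi$, completeness of~$\Delta$ places it in the relative interior of a unique non-zero cone $\sigma \in \Delta$, and then $R\nov{\sigma^{\vee}} \subseteq R[M]^{\wedge}_{\xi}$; the hypothesised acyclicity (hence contractibility) of $D \otimes R\nov{\sigma^{\vee}}$ then propagates along this ring inclusion. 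Your proposed Čech resolution of~$R[M]$ by the $R\nov{\sigma^{\vee}}$ is not established anywhere in the paper, and even granting it, the terms $D \otimes R\nov{\sigma^{\vee}}$ are acyclic but not bounded complexes of finitely generated projective $R$-modules, so one does not read off finite domination of~$D$ from such a resolution in the way you suggest.
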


\begin{proof}
  {\it Part\/}~(a): We deal with assertion~(i) first. Let $U \subseteq
  M_{\bR}$ be a subspace generated by elements of~$M$. As there is
  nothing to show if $U = \{0\}$ we assume $\dim U \geq 1$. We can
  choose a basis $b_{1},\, b_{2},\, \cdots,\, b_{u}$ of~$U$ consisting
  of elements of~$M$. 
  Extend this with $b_{u+1},\, b_{u+2},\, \cdots,\, b_{n}$ to a
  $\bZ$\nbd-basis of~$M$. This gives us preferred identifications of
  rings
  \begin{gather*}
    R[M \cap U] \iso R[x_{1}^{\pm 1},\, x_{2}^{\pm 1},\, \cdots,\,
    x_{u}^{\pm 1}] =: L' \\
    \noalign{\noindent and} R[M] \iso R[x_{1}^{\pm 1},\, x_{2}^{\pm
      1},\, \cdots,\, x_{u}^{\pm 1},\, x_{u+1}^{\pm 1},\, \cdots,\,
    x_{n}^{\pm 1}] = L \ .
  \end{gather*}
  Using this identification we consider $D$ as a complex of
  $L$\nbd-modules. By restriction of scalars it is also a complex
  of~$L'$\nbd-modules, consisting of free (but not necessarily
  finitely generated) modules.

  By hypothesis on~$D$ there are mutually inverse $R$\nbd-linear
  homotopy equivalences $\alpha \colon C \rTo D$ and $\beta \colon D
  \rTo C$ with $C$~a bounded cochain complex of finitely generated
  projective $R$\nbd-modules, and a homotopy $G \colon \id_{D} \simeq
  \alpha \circ \beta$. Then working with the $R$\nbd-algebra~$L'$
  instead of~$L$ we have an $L'$\nbd-linear homotopy equivalence
  \[\Sigma^{u} D \simeq \torus \derived(C;\,\alpha,\, \beta,\, G;\,
  x_1,\, x_2,\, \cdots,\, x_u) \ ,\] by
  Corollary~\ref{cor:long_chain_quasi_iso}. Now the complex on the
  right is the totalisation of a homotopy commutative cube on the
  complex $C \tensor_{R} L'$, which is bounded and consists of
  finitely generated projective $L'$\nbd-modules. This shows that
  $\Sigma^{u} D$~is $L'$\nbd-finitely dominated, hence so is~$D$.

  \smallbreak

  We now turn our attention to assertion~(ii). Let $U =
  \mathrm{cospan}\,(\sigma)$, and observe that $U$ is spanned by
  elements of~$M$ (as $\sigma$ is spanned by elements of~$M$). Write
  $\bar \sigma$ and~$\bar M$ for the images of~$\sigma$ and~$M$ in
  $\bar M_{\bR} = M_{\bR}/U$ as before. Note that $\bar M$~is a
  lattice of rank $n-u$, and $\bar \sigma$ is a cone of dimension
  $n-u$, where $u = \dim U$ as before.

  We have an isomorphism $R \nov{\sigma} \iso L' \nov{\bar
    \sigma}$. More precisely, a choice of $\bZ$\nbd-basis $b_{1},\,
  b_{2},\, \cdots,\, b_{n}$ of~$M$ as above (so that the $b_{k}$, $k
  \leq u$, form a $\bZ$\nbd-basis of~$M \cap U$) determines a
  splitting $M = (M \cap U) \oplus \bar M$; with respect to this
  splitting, $R \nov{\sigma} = L' \nov{\bar \sigma}$ as subsets of
  $\mathrm{map}\, (M,R)$.

  In fact, we can choose $b_{u+1},\, b_{u+2},\, \cdots,\, b_{n}$ in
  such a way that their respective images in~$\bar M_{\bR}$ lie
  in~$\bar \sigma$. For by Lemma~\ref{lem:basis_inside_cone} we can
  choose a $\bZ$\nbd-basis $\bar b_{u+1},\, \bar b_{u+2},\, \cdots,\,
  \bar b_{n}$ of~$\bar M$ consisting of elements of $\bar \sigma \cap
  M$, and these elements can be lifted along $M \rTo \bar M$.  With
  respect to this particular choice of basis, and with respect to the
  splitting $M = (M \cap U) \oplus \bar M$ it entails, we have
  \[L' \nov {x_{u+1},\, x_{u+2},\, \cdots,\, x_{n}} \subseteq L' \nov
  {\bar \sigma} = R \nov {\sigma}\] as subsets of
  $\mathrm{map}\,(M,R)$.

  Now as the $L$\nbd-module complex~$D$ is $L'$\nbd-finitely dominated
  by~(i) we know from Theorem~\ref{thm:findom_triv_1st_quadrant},
  applied to the ground ring~$L'$ instead of~$R$, that the complex $D
  \tensor_{L} L' \nov {x_{u+1},\, x_{u+2},\, \cdots,\, x_{n}}$ is
  acyclic and, being a bounded complex of free modules, is thus
  contractible; for this to make sense we also have to note that $L =
  L' [x_{u+1},\, x_{u+2},\, \cdots,\, x_{n}]$, thanks to our choice of
  basis elements~$b_{k}$. It follows that tensoring further over $L'
  \nov {x_{u+1},\, x_{u+2},\, \cdots,\, x_{n}}$ with $L' \nov {\bar
    \sigma} = R \nov {\sigma}$ results in a contractible, and thus
  acyclic, complex. But the result is isomorphic to the complex $D
  \tensor_{L} R \nov{\sigma}$, whence assertion~(ii) is proved.

  \medbreak

  {\it Part\/}~(b): Let $\xi \in N_{\bR}$ be an arbitrary non-zero
  vector; recall that $N$~is the dual of~$M$ so that $\xi$ determines
  a linear form on~$M_{\bR}$. The associated set
  \[R[M]_{\xi}^{\wedge} = \{f \colon M \rTo R \,|\, \forall r \in \bR:
  \# \mathrm{supp}(f) \cap \xi\inv \ignore[(-\infty,\, r\ignore)] <
  \infty\}\] carries a ring structure with multiplication given by
  convolution. By a theorem of \textsc{Sch\"utz}
  \cite[Theorem~4.7]{Schuetz-Sigma} it will be enough to show that $D
  \tensor_{R[M]} R[M]_{\xi}^{\wedge}$ is acyclic. But this is easy: as
  our fan covers all of~$N_{\bR}$ there is a (unique) smallest cone
  $\sigma \in \Delta$ with $\xi \in \sigma$.  ``Smallest'' here means
  with respect to inclusion, or equivalently with respect to
  dimension; in any case, $\sigma$ is the (unique) cone that
  contains~$\xi$ in its (relative) interior (and not in one of its
  proper faces). As $\xi \neq 0$ we have $\sigma \neq \{0\}$. Then $R
  \nov{\sigma} \subseteq R[M]_{\xi}^{\wedge}$ and
  \[D \tensor_{R[M]} R[M]_{\xi}^{\wedge} \iso D \tensor_{R[M]} R
  \nov{\sigma} \tensor_{R \nov{\sigma}} R[M]_{\xi}^{\wedge} \ .\] But
  $D \tensor_{R[M]} R \nov{\sigma}$ is acyclic by hypothesis, hence
  contractible (since $D$ is bounded and consists of free modules);
  consequently, $D \tensor_{R[M]} R[M]_{\xi}^{\wedge}$ is contractible
  and thus acyclic as well.
\end{proof}

\raggedright

\end{document}